\newif\ifforceblacklinks\forceblacklinksfalse
\def\PublicationTitle{All centralising monoids with majority witnesses on a four-element set}%
\def\CorrespondingAuthor{Mike Behrisch}%
\def\TUWname{\foreignlanguage{german}{%
             Tech\-ni\-sche Uni\-ver\-si\-t\"{a}t Wien}}%
\def\InstitutDMG{\foreignlanguage{german}{%
             In\-sti\-tut f\"{u}r Dis\-kre\-te Ma\-the\-ma\-tik und
             Geo\-me\-trie}}%
\def\AddressDMG{\foreignlanguage{german}{%
             Wied\-ner Haupt\-str.~8--10/E104}}%
\def\PostleitzahlWien{\mbox{A-1040} Vienna, Austria}%
\def\PublicationTopic{monoids defined as centralisers of
                      majority operations or semiprojections
                      on the four-element set}%
\def\PublicationKeywords{commutation,
                         centraliser clone,
                         bicentrically closed clone,
                         primitive positive clone,
                         centralising monoid,
                         majority operation,
                         semiprojection%
                         }
\def\DefaultCarrier{\PolInv@defaultsetA}
\newcommand{\CarrierSet}[1][]{
  \ifthenelse{\equal{#1}{}}{%
    \ensuremath{\DefaultCarrier}
  }{%
    \ensuremath{\DefaultCarrier^{#1}}
  }
}
\DeclareMathOperator{\ConstantsOp}{C}%
\newcommand{\Constants}[2][]{\@OpWithOptionalArityAndCombination{#1}{#2}{\ConstantsOp}{\@combinewithindex}}
\DeclareRobustCommand{\Consts}[1][]{\Constants[#1]{\DefaultCarrier}}%
\newcommand{\Ops}[1][]{\Op[#1]{\DefaultCarrier}}%
\newcommand{\TrivOps}[1][]{\J[#1]{\DefaultCarrier}}%
\newcommand*{\Palatalization}[1]{%
  \bgroup\fontencoding{T1}\selectfont\v{#1}\egroup}
\newtheorem{observation}[thmdefinitions@Regelsatz]{Observation}
\def\graph{}%
\renewcommand{\graph}[1]{\ensuremath{#1^{\bullet}}}%
\newcommand{\cent}[1]{\ensuremath{{#1^{*}}}}%
\newcommand{\bicent}[1]{\cent{\cent{#1}}}%
\newcommand{\commuteswith}{\mathrel{\bot}}%
\newcommand{\crd}[1]{\abs{#1}}%
\DeclareMathOperator{\MajOp}{Maj}%
\newcommand{\Maj}[1][\CarrierSet]{\ensuremath{\MajOp_{#1}}}%
\DeclareMathOperator{\bigO}{O}%
\newcommand{\LandauO}[1]{\bigO\apply{#1}}%
\newcommand{\cb}{,\mathclose{}\mathbin{\rule{0pt}{0pt}}}%
\newcommand{\mathff}[1]{{\bm#1}}%
\newcommand{\Restr}[2]{#1\rvert_{#2}}%
\newcommand{\condcyclic}[5]{%
\m{f\commuteswith \apply{#1 #2 #3 #4}} if and only if each of the following
quadruples belongs to the set
\m{\set{\apply{#1,#2,#3,#4},\apply{#2,#3,#4,#1},\apply{#3,#4,#1,#2},\apply{#4,#1,#2,#3}}}:
\bgroup\footnotesize
\begin{align*}
\apply{f\apply{#1,#2,#3},f\apply{#2,#3,#4},f\apply{#3,#4,#1},f\apply{#4,#1,#2}}, \\
\apply{f\apply{#1,#2,#4},f\apply{#2,#3,#1},f\apply{#3,#4,#2},f\apply{#4,#1,#3}}, \\
\apply{f\apply{#1,#3,#2},f\apply{#2,#4,#3},f\apply{#3,#1,#4},f\apply{#4,#2,#1}}, \\
\apply{f\apply{#1,#4,#2},f\apply{#2,#1,#3},f\apply{#3,#2,#4},f\apply{#4,#3,#1}}, \\
\apply{f\apply{#1,#3,#4},f\apply{#2,#4,#1},f\apply{#3,#1,#2},f\apply{#4,#2,#3}}, \\
\apply{f\apply{#1,#4,#3},f\apply{#2,#1,#4},f\apply{#3,#2,#1},f\apply{#4,#3,#2}}.\tag{#5}
\end{align*}
\egroup
}
\newcommand{\condtwocycles}[5]{%
\m{f\commuteswith\apply{#1 #2}\apply{#3 #4}} if and only if each of the
following sets either equals \m{\set{#1,#2}} or \m{\set{#3,#4}}:
\bgroup\footnotesize
\begin{align*}
  \set{f\apply{#1,#2,#3},f\apply{#2,#1,#4}}, &&
  \set{f\apply{#1,#3,#4},f\apply{#2,#4,#3}}, &&
  \set{f\apply{#3,#1,#2},f\apply{#4,#2,#1}}, \\
  \set{f\apply{#1,#2,#4},f\apply{#2,#1,#3}}, &&
  \set{f\apply{#1,#4,#3},f\apply{#2,#3,#4}}, &&
  \set{f\apply{#3,#4,#1},f\apply{#4,#3,#2}}, \\
  \set{f\apply{#1,#3,#2},f\apply{#2,#4,#1}}, &&
  \set{f\apply{#3,#1,#4},f\apply{#4,#2,#3}}, &&
  \set{f\apply{#4,#1,#2},f\apply{#3,#2,#1}}, \\
  \set{f\apply{#1,#4,#2},f\apply{#2,#3,#1}}, &&
  \set{f\apply{#4,#1,#3},f\apply{#3,#2,#4}}, &&
  \set{f\apply{#4,#3,#1},f\apply{#3,#4,#2}}.
  \tag{#5}
\end{align*}
\egroup
}
\newcommand{\condtriple}[5]{%
\m{f\commuteswith\apply{#1 #2 #3}\apply{#4}} if and only if each of the
following triples belongs to the set
\m{\set{\apply{#1,#2,#3},\apply{#2,#3,#1},\apply{#3,#1,#2},\apply{#4,#4,#4}}}:
\bgroup\footnotesize
\begin{align*}
  \apply{f\apply{#1,#2,#3},f\apply{#2,#3,#1},f\apply{#3,#1,#2}}, &&
  \apply{f\apply{#1,#3,#2},f\apply{#2,#1,#3},f\apply{#3,#2,#1}}, \\
  \apply{f\apply{#1,#2,#4},f\apply{#2,#3,#4},f\apply{#3,#1,#4}}, &&
  \apply{f\apply{#2,#1,#4},f\apply{#3,#2,#4},f\apply{#1,#3,#4}}, \\
  \apply{f\apply{#1,#4,#2},f\apply{#2,#4,#3},f\apply{#3,#4,#1}}, &&
  \apply{f\apply{#2,#4,#1},f\apply{#3,#4,#2},f\apply{#1,#4,#3}}, \\
  \apply{f\apply{#4,#1,#2},f\apply{#4,#2,#3},f\apply{#4,#3,#1}}, &&
  \apply{f\apply{#4,#2,#1},f\apply{#4,#3,#2},f\apply{#4,#1,#3}}.
  \tag{#5}
\end{align*}%
\egroup%
}
\newcommand{\condtwofixedpoints}[5]{%
\m{f\commuteswith\apply{#1 #2}\apply{#3}\apply{#4}}
if and only if each of the following sets either equals
\m{\set{#1,#2}}, \m{\set{#3}} or \m{\set{#4}}:
\bgroup\footnotesize
\begin{align*}
  \set{f\apply{#1,#2,#3},f\apply{#2,#1,#3}}, &&
  \set{f\apply{#3,#1,#2},f\apply{#3,#2,#1}}, &&
  \set{f\apply{#3,#1,#4},f\apply{#3,#2,#4}}, \\
  \set{f\apply{#1,#2,#4},f\apply{#2,#1,#4}}, &&
  \set{f\apply{#4,#1,#2},f\apply{#4,#2,#1}}, &&
  \set{f\apply{#4,#1,#3},f\apply{#4,#2,#3}}, \\
  \set{f\apply{#1,#3,#2},f\apply{#2,#3,#1}}, &&
  \set{f\apply{#1,#3,#4},f\apply{#2,#3,#4}}, &&
  \set{f\apply{#3,#4,#1},f\apply{#3,#4,#2}}, \\
  \set{f\apply{#1,#4,#2},f\apply{#2,#4,#1}}, &&
  \set{f\apply{#1,#4,#3},f\apply{#2,#4,#3}}, &&
  \set{f\apply{#4,#3,#1},f\apply{#4,#3,#2}}.
  \tag{#5}
\end{align*}%
\egroup%
}
\numberwithin{equation}{section} 
\begin{document}

\thispagestyle{empty}
\selectlanguage{british}

\title{\PublicationTitle%
       \thanks{This research was partially supported by the BMBWF
               through the OeAD project KONTAKT CZ~02/2019 `Function
               algebras and ordered structures related to logic and
               data fusion'.}}
\author{\CorrespondingAuthor%
        \thanks{\TUWname, \InstitutDMG}}%
\date{\today}
\maketitle

\begin{abstract}
We study unary parts of centraliser clones on the set \m{\set{0,1,2,3}},
so\dash{}called centralising monoids. We describe and count all
centralising monoids on the set \m{\set{0,1,2,3}} having
majority operations as
witnesses, and we list the inclusion maximal proper submonoids of the
full transformation monoid among them.
This extends previous work by Goldstern, Machida and Rosenberg.
\bgroup
\let\thefootnote\relax%
\footnote{%
\noindent\emph{AMS Subject Classification} (2020):
  08A35, 
  08A02, 
 (08A40, 
  08-08, 
  08-11
  ).\par%
\noindent\emph{Key words and phrases:} \PublicationKeywords}%
\footnote{This is a preliminary and partially extended version of an
article to appear in \emph{J. Mult.-Valued Logic Soft Comput.}}
\setcounter{footnote}{0}%
\egroup
\end{abstract}

\section{Introduction}\label{sect:intro}
Besides for being a very gentle and warm\dash{}hearted person, a gifted
but, at the same time, humble and unassuming mathematician, Ivo
Rosenberg was---and still is---probably best known in the universal
algebraic community for his two outstanding contributions to clone
theory: the characterisation of all maximal clones on finite sets
(\cite{RosenbergMaximalClonesFrench,RosenbergMaximalClones}) and his
classification theorem for generating functions of minimal clones
(\cite{RosenbergMinimalClones}), which will also feature in this
article (Theorem~\ref{thm:Rosenberg-minimal-clones}). However, his work on clones did not stop with clones on finite
sets, he also contributed to the theory of clones on infinite sets
(notably~\cite{RosenbergNumberOfClonesOnInfiniteSet,RosenbergNumberOfMaximalClonesInfiniteSet}),
and he studied generalisations of clones, like partial clones
(\cite{HaddadRosenbergPartialClonesContainingSymA,HaddadMachidaRosenbergMaximalMinimalPartialClones,HaddadLauRosenbergIntervalsOfPartialClonesOverMaximalClones,CouceiroHaddadRosenbergPartialClonesMonotoneSelfDualOps}),
hyperclones
(\cite{RosenbergHyperclones,MachidaPantovicRosenbergRegularSetsOfOperationsHyperclones})
and multiclones (e.g.~\cite{PouzetRosenbergMulticlones}), as well.
In this article we will directly build upon some of his research.
\par
It is well understood that every clone on a finite set can be written
as the set of polymorphisms of some set of relations
(\cite{GeigerClosedSystemsOfFunctionsAndPredicates,BodnarcukKaluzninKotovRomovGaloisTheoryForPostAlgebras}),
and this Galois theoretic approach to clones is central to Rosenberg's
work on clones and related structures. If one restricts in this
Galois connection the role of relations to allow only graphs of
functions, one obtains the notion of \emph{commutation} between
functions: In this context we say that an \nbdd{n}place function~$f$
commutes with an \nbdd{m}place function~$g$, both defined over the same
set of arguments~$\CarrierSet$, if
\m{g\apply{f\apply{X}} = f\apply{g\apply{X^T}}} holds
for any matrix \m{X\in \CarrierSet[m\times n]}, where
\m{f(X)\defeq\apply{f\apply{X\apply{i,\cdot}}}_{i\in m}}
denotes the tuple obtained by applying~$f$
row\dash{}wise to the matrix~$X$, and similarly for~$g$ and the
transposed matrix~$X^T$ (i.e., $g$ operates on the columns of~$X$).
This commutation condition, denoted by $f \commuteswith g$, is the same
as saying that~$f$ preserves the graph \m{\graph{g}\defeq
\lset{(x,g(x))}{x\in\CarrierSet[m]}\subs A^{m+1}}, understood as an
\nbdd{(m+1)}ary relation on~$\CarrierSet$; moreover, it is easy to see
that it is a symmetric property, i.e., \m{f\commuteswith g} if and only
if \m{g\commuteswith f}.
Now collecting, for a given set~$F$ of functions of finite (possibly
different) arity, the set of all finitary operations \m{g} that commute
with all functions from~$F$ gives the \emph{centraliser}
\m{\cent{F}=\lset{g}{\arity(g)<\omega\land \forall f\in F\colon
f\commuteswith g}}, which is the polymorphism clone
\m{\Pol{\graph{F}}} of all the graphs of the functions in~$F$
(where \m{\graph{F}\defeq \lset{\graph{f}}{f\in F}}).

Hence, the Galois closures of the Galois correspondence induced by
commutation are still clones (they are called \emph{centraliser
clones}), but they are far from being all of
them. In fact, there is only a finite number of centraliser clones for
every finite set~$\CarrierSet$, as was shown by Burris and
Willard~\cite[Corollary~4, p.~429]{BurrisWillardFinitelyManyPPClones},
while there are infinitely many (polymorphism) clones when
\m{\crd{\CarrierSet}\geq 2}.
By their definition, centraliser clones~$\cent{F}$ are exactly the
clones consisting of homomorphisms of finite powers of an algebra, namely
\m{\algwops{A}{F}}, to itself.
Alternatively, they can be characterised as those clones~$F$ that are
equal to their own \emph{bicentraliser}~$\bicent{F}$, i.e., those which
are \emph{bicentrically closed}: \m{F=\bicent{F}}.
\par
Despite (or perhaps because) there are so comparatively few of
them, there is not as much known about centraliser clones as about
clones in general.  Originally, they were introduced and studied
under the name \emph{parametrically closed classes} in work by
Kuznecov~\cite{KuznecovCentralisers1979} and
Dani\v{l}\v{c}enko~\cite{Danilcenko1977ParametricExpressibility3ValuedLogic,Danilcenko1977ParametricExpressibility3ValuedLogicTranslated,Danilcenko1978ParametricallyClosedClasses3ValuedLogic,Danilcenko1979-thesis,Danilcenko1979ParametricalExpressibilitykValuedLogic,KuznecovDanilcenkoNegruPavlovaCombinatorialQuestionskValuedLogic,Danilcenko1987PropertiesLatticeOfParametricallyClosedClasses3ValuedLogic}; later also
Harnau~\cite{Harnau1DefVertauschbarkeitMaximalitaet,Harnau2VertauschbarkeitBasisproblem,Harnau3NumInvarianten,Harnau4KettenlaengenVertauschbarkeitsmengen,Harnau5PosetVertauschbarkeitsmengen,Harnau6VerallgVertauschbarkeit}
added to the theory, mainly dealing with centralisers of unary
operations. Continuing the line of studies initiated by Harnau,
centraliser clones also form part of the vast \oe{}uvre of Ivo
Rosenberg,
e.g.~\cite{MachidaMiyakawaRosenbergRelationsBetwClonesAndFullMonoids,MachidaMiyakawaRosenbergCentralisersOfMonoidsInCloneTheory,MachidaRosenbergCentralisersOfMonoidsInCloneTheory,MachidaRosenbergCentralizersClonesOfMonoidsBeingTrivial,MachidaRosenbergCentralizersClonesOfMonoidsContainingSymA,MachidaRosenbergCentraliserClonesOfBinaryOps},
in particular, they appear through his investigations of
so\dash{}called \emph{centralising monoids}
\cite{MachidaRosenbergCentralisingMonoidsInCloneTheory,MachidaRosenbergCentralisingMonoidsWitnessLemma,MachidaRosenbergCentralisingMonoids,MachidaRosenberg3MaximalCentralizingMonoids3Elements,MachidaRosenbergGoldsternCentralisingMonoids,MachidaRosenbergCentralisingMonoidsBinaryIdempotentOps,MachidaRosenbergCentralisingMonoidsArity},
all jointly written with Hajime Machida.
This body of work is actually quite related to questions studied by
Harnau in the 1970ies, but the different authors approach the same
problem from `opposite' sides. Namely, they further restrict the Galois
connection of commutation to only allow unary operations on one of the
two sides (but functions of arbitrary arity on the other). Harnau
studies centraliser clones \m{\cent{\set{f}}} of unary functions
\m{f\colon \CarrierSet \to \CarrierSet} (and hence provides information
on centralisers \m{\cent{F} = \bigcap_{f\in F}\cent{\set{f}}} of
sets~$F$ of unary operations). Machida and Rosenberg also study the
other side of the Galois correspondence, namely the unary parts
\m{\Fn[1]{\cent{F}{}} = \lset{g\colon
\CarrierSet\to\CarrierSet}{g\in\cent{F}}} of centraliser
clones~\m{\cent{F}} given by arbitrary sets of functions~$F$. Since
these unary parts form transformation monoids on~$\CarrierSet$, they
are called \emph{centralising monoids}, and any set~$F$ of operations
confirming that some transformation monoid \m{M\subs
\CarrierSet[\CarrierSet]} is a centralising monoid
\m{M=\Fn[1]{\cent{F}{}}} is called a \emph{witness} of~$M$, see,
e.g.~\cite{MachidaRosenbergCentralisingMonoidsWitnessLemma,MachidaRosenbergCentralisingMonoids}.
Using Rosenberg's Classification Theorem for minimal clones
(Theorem~\ref{thm:Rosenberg-minimal-clones}) and an
exhaustive search through 84 cases,
in~\cite[Proposition~4.3, p.~156]{MachidaRosenbergCentralisingMonoids}
all ten maximal centralising monoids on a three\dash{}element set were
determined. In fact, the authors report
in~\cite[p.~205]{MachidaRosenbergGoldsternCentralisingMonoids} that they
enumerated all 192 centralising monoids on \m{\set{0,1,2}}, as well,
but for reasons of space they were not able to publish this long list
in~\cite{MachidaRosenbergCentralisingMonoids}. This exhaustive method
being somewhat unsatisfactory,
in~\cite{MachidaRosenbergGoldsternCentralisingMonoids}, the authors set
out for a more systematic approach to find all centralising monoids
on~\m{\set{0,1,2}} with majority functions or ternary semiprojections
as witnesses.
\par
The present paper is part of an endeavour to continue and properly
extend the work begun by Rosenberg and Machida
in~\cite{MachidaRosenbergCentralisingMonoids},
namely to obtain all maximal centralising monoids on a
\emph{four\dash{}element} domain. So, for a four\dash{}element
carrier set, e.g., \m{\CarrierSet=\set{0,1,2,3}}, we shall study the
Galois connection induced by commutation between unary operations and
functions of arbitrary positive arity on the other side, with the aim of
finding the co\dash{}atoms among the Galois closed sets on the side of
unary operations. For this we shall heavily rely on the methods
established in~\cite{MachidaRosenbergGoldsternCentralisingMonoids},
however, what could be done with pen and paper
in~\cite{MachidaRosenbergGoldsternCentralisingMonoids}, now requires
massive computerised support due to combinatorial explosion. Even
though the techniques developed
in~\cite{MachidaRosenbergGoldsternCentralisingMonoids} are systematic
in nature and our way of proceeding needs to refine these still more to
be at all feasible (cf.\ Section~\ref{sect:method}), the whole project
partly turns into a brute\dash{}force exhaustive search. However, such
is the nature of the beast, and thus a blow\dash{}up probably cannot be
avoided.
\par
Continuing the work done
in~\cite{MachidaRosenbergGoldsternCentralisingMonoids}, in this article
we investigate, as a first step, commutation between unary operations
and majority operations (and partly with ternary semiprojections). For
\m{\crd{\CarrierSet}=4} we show in
Corollary~\ref{cor:number-of-centralising-monoids} that there are
\emph{exactly $1\,715$ centralising monoids} having majority functions as
witnesses, and that there are~$147$ maximal ones among them
(Corollary~\ref{cor:maximal-centralising-monoids}). We provide enough
data so that anyone may check that there are at least $1\,715$
centralising monoids of this type. For this bound to be sharp, one has
to trust the completeness of our computations, the basis of which is
described in Section~\ref{sect:attribute-clarification} and the
beginning of Section~\ref{sect:results}. Since we are concerned with
the enumeration of Galois closures, we find it convenient to phrase our
line of action in the language of formal concept analysis (see,
e.g.~\cite{GaWiFCA}), as it offers pre\dash{}defined terminology for
the type of manipulations we are going to employ. A methodological part
explaining this connection can be found in Section~\ref{sect:method}.
Section~\ref{sect:prelims} introduces our notation and presents more
background information on maximal centralising monoids, which is needed
to understand the approach described in this paper and the relationship
between the presented results and the overall goal of finding all
maximal centralising monoids on \m{\set{0,1,2,3}}.
\par
It is worth noting that apart from its finiteness, practically nothing
is known about the lattice of centraliser clones beyond
three\dash{}element carrier sets. For the case
\m{\CarrierSet = \set{0,1,2}}, Dani\v{l}\v{c}enko gave a complete
description of the whole lattice in her doctoral
thesis~\cite[Section~6, p.~125 et seqq.]{Danilcenko1979-thesis},
showing that there are $2\,986$ centraliser clones. Already on
four\dash{}element domains, even just estimates for the size of the
lattice are unknown, and so any (lower) bounds, obtained, for instance,
by counting certain types of centralising monoids, provide valuable new
pieces of information.

\section{Preliminaries and notation}\label{sect:prelims}
\subsection{Fundamental definitions and notation}\label{subsect:notation}
In this article we shall use $\N=\omega=\set{0,1,2,\dots}$ to denote
the set of natural numbers including zero, i.e., all finite ordinals.
It will be convenient for us to employ the von Neumann model of
ordinals, so each natural number \m{n\in \N} will be the set
\m{n=\set{0,\dotsc,n-1}} of its (finite ordinal) predecessors, zero
being the empty set. In particular, we will often work with the finite
carrier set \m{\CarrierSet = \set{0,1,2,3} = 4}, and sometimes we shall
write just~$4$ for it (in other cases we will explicitly
use \m{A=\set{0,1,2,3}} to avoid confusion). Moreover, the cardinality
of some set~$B$ is denoted by~$\crd{B}$.
\par

A central object of this paper are functions, such as
\m{f\colon A\to B} and \m{g\colon B\to C}.
Their composition \m{g\circ f\colon A\to C} sends elements \m{x\in A} to \m{g(f(x))}, i.e., we
compose from the left. The set of all functions from~$A$ to~$B$ is
denoted by~$B^A$. If \m{f\in B^A} and \m{U\subs A}, \m{V\subs B}, the
image of~$U$ under~$f$ is \m{f\fapply{U} \defeq \lset{f(x)}{x\in U}},
and the preimage of~$V$ under~$f$ is
\m{f^{-1}\fapply{V} \defeq \lset{x\in A}{f(x)\in V}}. The image of~$f$
is just \m{\im f \defeq f\fapply{A}}. We also need restrictions; namely,
for \m{U\subs A} we have the function \m{\Restr{f}{U}\colon U\to B}
that operates as~$f$, but on a smaller domain (we leave the
co\dash{}domain unchanged). Besides, if \m{f\fapply{U}\subs U}, then we
can also modify the co\dash{}domain, to get \m{f\restriction_{U}},
which is a map from~$U$ to~$U$, sending \m{x\in U} to~\m{f(x)}.
\par

In particular, for \m{n\in\N} we understand \nbdd{n}tuples
\m{\mathff{x}\in\CarrierSet[n]} as functions from the set
\m{n=\set{0,\dotsc,n-1}} to \m{A}:
\m{\mathff{x}} is a map with values
\m{x_0=\mathff{x}(0), x_1=\mathff{x}(1),\dots} and is written as
\m{\mathff{x}=\apply{x_0,\dotsc,x_{n-1}}= \apply{x_i}_{i\in n}}. All
notation that is meaningful for functions will hence also be used for
tuples; this includes image, preimage and composition:
\m{\im{\mathff{x}} = \set{x_0,\dotsc,x_{n-1}}},
\m{\mathff{x}^{-1}\fapply{V} = \lset{0\leq i<n}{x_i\in V}} for
\m{V\subs\CarrierSet}, and
\m{f\circ\mathff{x} = \apply{f(x_0),\dotsc,f(x_{n-1})}}
for \m{f\colon \CarrierSet\to B}, while
\m{\mathff{x}\circ\alpha=\apply{x_{\alpha(i)}}_{i\in I}} for a
re\dash{}indexing function \m{\alpha\colon I\to n}.
\par

The functions we shall mainly be interested in have the form that their
domain is a finite power of their co\dash{}domain, i.e.,
\m{f\colon \CarrierSet[n]\to\CarrierSet} with \m{n\in\N} being the
\emph{arity} of~$f$. Functions of arity zero are negligible with
respect to commutation; hence we collect all \emph{finitary operations
on~$\CarrierSet$} in the set \m{\Ops = \bigcup_{n\in\N\setminus\set{0}}
A^{A^n}}. For any \m{F\subs\Ops} and \m{n\in\N} we denote by
\m{\Fn{F}\defeq A^{A^n}\cap F} its \nbdd{n}ary part. Hence, in
particular \m{\Ops[n]=A^{A^n}} and \m{\Fn{F}=\Ops[n]\cap F} for any
\m{F\subs\Ops}. For \m{i\in n\in\N} the \emph{\nbdd{n}ary projection}
onto the position~$i$ is the operation \m{\eni{i}\in\Ops[n]}
mapping \m{\mathff{x}=\apply{x_j}_{j\in n}} to
\m{\eni{i}(\mathff{x})\defeq x_i}. All projections form the set
\m{\TrivOps=\bigcup_{0<n<\omega}\lset{\eni{i}}{i\in n}}.
\par

Since centralisers are clones, we briefly mention that a \emph{clone}
is a set \m{F\subs\Ops} of operations, which contains all projections
(i.e., \m{\TrivOps\subs F}) and is closed under composition of finitary
operations. The precise meaning of `composition' in this context and
much more additional information on clones, e.g., on their Galois
theory, can be found in the reference monographs~\cite{PoeKal}
or~\cite{LauFunctionAlgebrasOnFiniteSets}. Easy examples of clones are
\m{\TrivOps,\Ops} and~\m{\cent{F}} for any \m{F\subs\Ops}.

To define interesting classes of finitary operations, it is convenient
to use the concept of \emph{identities}. An identity is an expression
of the form \m{s\approx t} where \m{s,t} are functional terms over a
given collection of function symbols and a certain supply of variable
symbols. \emph{Satisfaction} of an identity over a structure
$\algwops{\CarrierSet}{f,g,\dots}$, where \m{f,g,\dots} are concrete
functions interpreting the abstract symbols in the identity, means that
the equality described by $s\approx t$ is \emph{universally true},
i.e., is true when all variables in~$s$ or~$t$ are universally
quantified over~$\CarrierSet$ and the abstract symbols are replaced by
the concrete functions $f,g,\dots$ of the structure. When the carrier
set~$\CarrierSet$ is clear from the context, one usually suppresses the
structure and speaks of \emph{functions satisfying some identities};
moreover often the same symbols are used for the concrete functions and
the abstract symbols.
\par

For example, a projection on the place~$i$ is a function
\m{f\in\Ops[n]} satisfying the identity
\m{f(x_0,\dotsc,x_{n-1})\approx x_i}. An \emph{idempotent} function is
a finitary operation \m{f\in\Ops} satisfying \m{f(x,\dotsc,x)\approx x}.
A \emph{majority operation} is a ternary operation \m{f\in\Ops[3]}
satisfying the identities
\m{f(x,x,y)\approx f(x,y,x)\approx f(y,x,x)\approx x},
while a \emph{minority} operation \m{f\in\Ops[3]} satisfies
\m{f(x,x,y)\approx f(x,y,x)\approx f(y,x,x)\approx y}. Both majority
and minority operations are idempotent. A \emph{constant} is a function
\m{f\in\Ops[n]} satisfying \m{f(x_0,\dotsc,x_{n-1})\approx a} where
\m{a} is a constant (nullary) symbol from~$\CarrierSet$. We shall only
need unary constant operations, and denote them as \m{c_a} where
\m{a\in \CarrierSet} is the unique value in the image of the constant.
We collect all majority operations on~$\CarrierSet$ in the set
\m{\Maj\subs\Ops[3]} and all unary constants in the
set \m{\Consts=\lset{c_a}{a\in\CarrierSet}\subs\Ops[1]}.
\par
There is another important type of functions, which is not defined via
identities.
For $n\geq 2$ and $i\in n$ we call a function \m{f\in\Ops[n]} an
\emph{\nbdd{n}ary semiprojection on the position~$i$} if, for every
tuple \m{\mathff{x}=\apply{x_j}_{j\in n}\in \CarrierSet[n]} with
repetitions (i.e., there are $0\leq j<l<n$ with $x_j=x_l$), the result
\m{f\apply{\mathff{x}}} equals~\m{x_i}. An \nbdd{n}ary
semiprojection is an \nbdd{n}ary semiprojection on some
index~\m{i\in n}; we call it \emph{proper}, if it is not a projection.
Some authors restrict the
arities of semiprojections to be at least three; for us, a binary
semiprojection just subsumes an idempotent binary function, i.e., a
function satisfying \m{f(x,x)=x} for all \m{x\in\CarrierSet}.
\par

Moreover, we shall use \emph{permutations}, that is, bijective
self\dash{}maps \m{f\colon \CarrierSet\to\CarrierSet}. All permutations
on~$\CarrierSet$ form the symmetric group \m{\Sym(\CarrierSet)}. In
particular, \m{\Sym(n)} is the symmetric group of all permutations of
the \nbdd{n}element set \m{n=\set{0,\dotsc,n-1}}.

\subsection{Basic facts regarding commutation}\label{subsect:commutation}
As we want to study centralising monoids, we will be dealing with the
Galois correspondence between~$\Ops$ and~$\Ops[1]$ induced by
commutation (i.e., by the restriction of~$\commuteswith$ to
\m{\Ops\times\Ops[1]}). It is rather obvious from the definition that
in this case the commutation condition given in the introduction
simplifies as follows:
\begin{observation}\label{obs:commutation-unary}
For \m{n\in\N\setminus\set{0}}, \m{f\in\Ops[n]} and \m{s\in\Ops[1]} on
any set~$\CarrierSet$, we have \m{s\in\cent{\set{f}}} if and
only if the equality
\m{s\apply{f\apply{\mathff{x}}} = f\apply{s\circ \mathff{x}}}
holds for all \m{\mathff{x}\in\CarrierSet[n]}.
\end{observation}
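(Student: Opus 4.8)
I would prove the observation by simply specialising the matrix condition from the introduction to the case in which the second function is unary. Set \m{g\defeq s}, so that the arity parameter~$m$ of the second operand equals~$1$, while \m{n=\arity(f)}. Recall that, by definition, \m{s\in\cent{\set{f}}} amounts to \m{\arity(s)<\omega}---which is automatic here, since \m{s\in\Ops[1]}---together with \m{f\commuteswith s}; hence it suffices to check that \m{f\commuteswith s}, i.e., the equality \m{s\apply{f\apply{X}}=f\apply{s\apply{X^T}}} for all \m{X\in\CarrierSet[1\times n]}, is equivalent to the pointwise condition in the statement. (One could just as well invoke the symmetry of~$\commuteswith$ recalled in the introduction and argue with \m{s\commuteswith f}; this changes nothing.)

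The key step is to unwind what each side of that matrix equality means. A matrix \m{X\in\CarrierSet[1\times n]} is nothing but its single row, i.e., a tuple \m{\mathff{x}=\apply{x_0,\dotsc,x_{n-1}}\in\CarrierSet[n]}, and every \m{\mathff{x}\in\CarrierSet[n]} arises from exactly one such~$X$; thus the universal quantifier over \m{X\in\CarrierSet[1\times n]} is literally the universal quantifier over \m{\mathff{x}\in\CarrierSet[n]}. Applying~$f$ row\dash{}wise to~$X$ yields the single element \m{f\apply{\mathff{x}}} (a $1$\dash{}tuple), at which the unary function~$s$ is then evaluated, producing the left\dash{}hand side \m{s\apply{f\apply{\mathff{x}}}}. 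Dually, the transpose \m{X^T\in\CarrierSet[n\times 1]} has the $1$\dash{}tuples \m{\apply{x_0},\dotsc,\apply{x_{n-1}}} as its rows, so letting~$s$ act on the single column of~$X$---that is, applying~$s$ row\dash{}wise to~$X^T$---yields the \nbdd{n}tuple \m{\apply{s\apply{x_0},\dotsc,s\apply{x_{n-1}}}}, which is precisely \m{s\circ\mathff{x}} in the notation of Subsection~\ref{subsect:notation}. Feeding this \nbdd{n}tuple into~$f$ gives the right\dash{}hand side \m{f\apply{s\circ\mathff{x}}}. Hence, for the~$X$ corresponding to a given~$\mathff{x}$, the equality \m{s\apply{f\apply{X}}=f\apply{s\apply{X^T}}} is the very equality \m{s\apply{f\apply{\mathff{x}}}=f\apply{s\circ\mathff{x}}}, and conjoining over all admissible~$X$ (equivalently, over all \m{\mathff{x}\in\CarrierSet[n]}) establishes both directions at once.

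As this is a pure translation of definitions, I expect no genuine obstacle. The only point that requires (minor) care is the bookkeeping of the canonical identifications \m{\CarrierSet[1\times n]\cong\CarrierSet[n]\cong\CarrierSet[n\times 1]} and the remark that ``applying the unary operation~$s$ row\dash{}wise to an \m{n\times 1} matrix'' coincides with the post\dash{}composition map \m{\mathff{x}\mapsto s\circ\mathff{x}}; once these are made explicit, the equivalence is immediate.
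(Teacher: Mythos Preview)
Your proposal is correct and matches the paper's approach: the paper does not give a separate proof but merely states that the observation is ``rather obvious from the definition'' of commutation, and your argument is precisely the detailed unwinding of that definition in the special case \m{m=1}. There is nothing to add or correct.
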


The Galois closed sets are then, on the one side, all centralising
monoids
\[\lset{\Fn[1]{\cent{F}{}}}{F\subs\Ops}=
\lset{\Fn[1]{\bicent{S}{}}}{S\subs\Ops[1]},\]
and all centralisers of unary operations
\[\lset{\cent{S}}{S\subs\Ops[1]}
=\lset{\cent{\Fn[1]{\cent{F}{}}{}}}{F\subs\Ops}\]
on the other. Notably, a set \m{S\subs\Ops[1]} is a centralising
monoid, if and only if it is closed under the closure operator
\m{\Fn[1]{\bicent{\,}{}}}, so \m{\Fn[1]{\bicent{S}{}}=S}
(cf.~\cite[Definition/Lemma~2.2]{MachidaRosenbergCentralisingMonoids}).
A centralising monoid is \emph{maximal} if it is a co\dash{}atom (lower
cover of~$\Ops[1]$) in the lattice of Galois closures. Equivalently, a
centralising monoid~\m{M\subs\Ops[1]} is maximal if and only if its
centraliser~$\cent{M}$ is an atom above~\m{\cent{{\Ops[1]}}}. The
centraliser is therefore generated under the closure operator
\m{\cent{\Fn[1]{\cent{\,}{}}{}}} by any of its
non\dash{}trivial functions \m{f\in\cent{M}\setminus\cent{{\Ops[1]}}}.
As \m{\cent{M}=\cent{\Fn[1]{\cent{\set{f}}{}}{}}}, this means that
\m{M = \Fn[1]{\bicent{M}{}} =\set{f}^{*(1)**(1)}
                            =\Fn[1]{\cent{\set{f}}{}}}
for any \m{f\in\cent{M}\setminus\cent{{\Ops[1]}}}.
This condition entails an intimate connection between maximal
centralising monoids and generating functions of minimal clones as
classified in Rosenberg's Theorem~\ref{thm:Rosenberg-minimal-clones}.
This connection is known (see,
e.g.~\cite[Section~3, p.~155]{MachidaRosenbergCentralisingMonoids}),
and so we will only sketch the details. In particular, for the necessary
background facts about clones and their Galois theory, we have to refer
the reader to~\cite{PoeKal} or~\cite{LauFunctionAlgebrasOnFiniteSets}.

\newsavebox{\citationbox}
\sbox{\citationbox}{%
\bfseries cf.~\cite[Theorem~3.2]{MachidaRosenbergCentralisingMonoids}}
\begin{proposition}[{\usebox{\citationbox}}]%
\label{prop:generating-maximal-cent-monoids}
Given any maximal centralising monoid \m{M\subs\Ops[1]} on any finite
set~$\CarrierSet$, there is a minimal clone and a generating function
\m{f\in\cent{M}\setminus\cent{{\Ops[1]}}} of minimum arity of that
clone such that \m{M=\Fn[1]{\cent{\set{f}}{}}}.
\end{proposition}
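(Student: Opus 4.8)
The plan is to recast the argument of \cite[Theorem~3.2]{MachidaRosenbergCentralisingMonoids} by reducing the assertion, via the commutation Galois connection together with Rosenberg's classification of minimal clones (Theorem~\ref{thm:Rosenberg-minimal-clones}), to the single fact that \m{\cent{{\Ops[1]}}=\TrivOps} whenever \m{\crd{\CarrierSet}\geq 3}; the cases \m{\crd{\CarrierSet}\leq 2} are then settled by direct inspection.

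First I would note that \m{\cent{{\Ops[1]}}\subsetneq\cent{M}}: as \m{M} is a co\dash{}atom it is a proper Galois\dash{}closed subset of \m{\Ops[1]}, and since \m{\Ops[1]} is itself Galois closed (\m{\Fn[1]{\bicent{{\Ops[1]}}{}}=\Ops[1]}), the equality \m{\cent{{\Ops[1]}}=\cent{M}} would force \m{M=\Fn[1]{\bicent{M}{}}=\Fn[1]{\bicent{{\Ops[1]}}{}}=\Ops[1]}, which is absurd. By the remarks immediately preceding the statement, \m{M=\Fn[1]{\cent{\set{f}}{}}} already holds for \emph{every} \m{f\in\cent{M}\setminus\cent{{\Ops[1]}}}; hence it suffices to arrange that one such \m{f} is a generating operation of minimum arity of a minimal clone. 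Equivalently, it suffices to exhibit a minimal clone \m{C\subs\cent{M}} with \m{C\not\subs\cent{{\Ops[1]}}}: for then any non\dash{}projection \m{f\in C} of least arity generates \m{C} (by minimality of \m{C}), has the smallest arity among the generators of \m{C}, lies in \m{\cent{M}}, and cannot lie in \m{\cent{{\Ops[1]}}} (otherwise \m{C=\langle f\rangle\subs\cent{{\Ops[1]}}}), while \m{M=\Fn[1]{\cent{\set{f}}{}}} by the above---with \m{f} of one of the five types of Theorem~\ref{thm:Rosenberg-minimal-clones}.

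The substantive step is therefore to prove \m{\cent{{\Ops[1]}}=\TrivOps} for \m{\crd{\CarrierSet}\geq 3}. One inclusion always holds; for the converse I would check that \m{\cent{{\Ops[1]}}} contains no minimal clone, eliminating each type of minimum\dash{}arity generator in Theorem~\ref{thm:Rosenberg-minimal-clones}. A unary member of \m{\cent{{\Ops[1]}}} commutes with every constant \m{c_a}, hence fixes every point and is the identity. For the remaining types one uses, besides \m{\Sym(\CarrierSet)}, the transformations \m{r} that send one point onto another and fix the rest: commuting with all such \m{r} forces the value \m{g\apply{a,b}} of a binary idempotent member \m{g} into \m{\set{a,b}} for every \m{a\neq b}, and then commuting with \m{\Sym(\CarrierSet)} makes \m{g} a projection; those same \m{r} force a majority operation in \m{\cent{{\Ops[1]}}} to coincide with the first projection on all triples of pairwise distinct arguments, but that operation does not commute with every such \m{r}; and a ternary affine operation---which can occur only for \m{\crd{\CarrierSet}} a power of \m{2}---would make every unary map affine over the underlying Boolean group, which is impossible once \m{\crd{\CarrierSet}\geq 4}. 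Finally, a proper semiprojection of arity \m{\geq 3} does not commute with some non\dash{}injective unary map unless it is a projection. Hence \m{\cent{{\Ops[1]}}} contains no minimal clone; since the clone lattice over a finite set is atomic, \m{\cent{M}\supsetneq\TrivOps} still contains a minimal clone \m{C}, and automatically \m{C\not\subs\TrivOps=\cent{{\Ops[1]}}}, exactly as the reduction requires.

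For \m{\crd{\CarrierSet}\leq 2} the assertion is vacuous when \m{\crd{\CarrierSet}=1}, and for \m{\CarrierSet=\set{0,1}} it follows from an inspection of the finitely many centralising monoids: the maximal ones are \m{\set{\mathrm{id},\lnot}=\Fn[1]{\cent{\set{\lnot}}{}}} and \m{\set{\mathrm{id},c_0,c_1}=\Fn[1]{\cent{\set{f}}{}}} for a binary idempotent \m{f} (conjunction, say), and \m{\lnot}, respectively \m{f}, is a minimum\dash{}arity generator of a minimal clone not contained in the idempotent self\dash{}dual clone \m{\cent{{\Ops[1]}}}. The one nontrivial ingredient is that \m{\cent{{\Ops[1]}}} contains no minimal clone when \m{\crd{\CarrierSet}\geq 3} (and the analogous point for \m{\crd{\CarrierSet}=2}, that \m{\cent{M}} still contains a minimal clone outside the idempotent self\dash{}dual clone); everything else is formal manipulation of the commutation Galois connection.
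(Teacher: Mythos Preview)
Your reduction is the same as the paper's: show that \m{\cent{M}} strictly contains \m{\cent{{\Ops[1]}}}, pick a minimal clone inside \m{\cent{M}} (using atomicity of the clone lattice on a finite set), and observe that any minimum\dash{}arity generator of that clone lies in \m{\cent{M}\setminus\cent{{\Ops[1]}}}. The genuine difference is how you establish \m{\cent{{\Ops[1]}}=\TrivOps} for \m{\crd{\CarrierSet}\geq 3}. The paper does not go through Rosenberg's types at all; instead it notes that every \m{f\in\cent{{\Ops[1]}}} preserves the kernel of every unary map, hence preserves all equivalence relations on~\m{\CarrierSet}, and then quotes a classical structural result (Pierce; L\"anger--P\"oschel) saying that for \m{\crd{\CarrierSet}>2} such functions are projections or constants---idempotence rules out constants. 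Your route, eliminating each of Rosenberg's minimum\dash{}arity generator types by explicit commutation arguments and then invoking atomicity, is longer and more ad hoc (your majority sketch, for instance, actually forces the value on a distinct triple to be the \emph{third} argument rather than the first, though the contradiction still goes through), but it is self\dash{}contained given Theorem~\ref{thm:Rosenberg-minimal-clones} and avoids the external citation. For \m{\crd{\CarrierSet}=2} you argue by listing the two maximal centralising monoids directly, whereas the paper instead observes that \m{\cent{M}}, strictly containing the idempotent self\dash{}dual clone, must contain either the full self\dash{}dual clone or the full idempotent clone, each of which houses a suitable minimal generator (\m{\neg}, resp.\ \m{\land} or \m{\lor}); the two arguments amount to the same thing.
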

\begin{proof}
For \m{\crd{\CarrierSet}<2} there are no maximal centralising monoids,
and the claim is trivial. For \m{\crd{\CarrierSet}\geq 2} the
centraliser \m{\cent{M}} is an atom strictly above
\m{\cent{{\Ops[1]}}\supseteq\TrivOps}. By finiteness of~$\CarrierSet$,
the centraliser \m{\cent{M}} contains some minimal clone~$F$. If~$F$
can be chosen such that it contains a minimum arity
generator~$f\notin\cent{{\Ops[1]}}$, we are done since any such~$f$
satisfies \m{M=\Fn[1]{\cent{\set{f}}{}}}. In the Boolean case,
\m{\cent{{\Op[1]{2}}}=\cent{\set{c_0,c_1,\neg}}} is the clone of
idempotent self\dash{}dual functions, so
\m{\cent{M}} contains the clone \m{\cent{\set{\neg}}\ni \neg} of
self\dash{}dual functions or the clone
\m{\cent{\set{c_0,c_1}}\ni\land,\lor} of idempotent operations.
Since~$\neg$ is self\dash{}dual but not idempotent, and the
Boolean lattice operations are idempotent but not self\dash{}dual, and
each of them generates a minimal clone, the claim holds for
\m{\crd{\CarrierSet}=2}. For \m{\crd{\CarrierSet}>2}, every function
\m{f\in\cent{{\Ops[1]}}} commutes with every unary operation and thus
preserves the kernel
\m{\ker(s) = \lset{(x,y)\in \CarrierSet[2]}{s(x)=s(y)}} of any unary
operation~$s$. Therefore, it preserves all equivalence relations
on~$\CarrierSet$, and from this one can show by induction on the arity
of~$f$ (the base case is the second exercise problem on p.~38
of~\cite{PierceTheoryOfAbstractAlgebras}, the inductive step is
contained in the `[d]irect proof of~2.2.' on p.~132
of~\cite{PoeLaengerRelationalSystemsWithTrivialEndPol}) that~$f$ must be a projection or constant (this requires
\m{\crd{\CarrierSet}>2}), cf.~\cite[3.3.~Example,
p.~136]{PoeLaengerRelationalSystemsWithTrivialEndPol} and the remark on p.~137, ibid.
Since \m{f\in\cent{{\Ops[1]}}\subs\cent{\Consts}}, it is idempotent and
thus fails to be a constant. So for \m{\crd{\CarrierSet}>2}, we have
\m{\cent{{\Ops[1]}}=\TrivOps}, and one may choose any minimal clone
\m{F\subs \cent{M}}, and any of its (minimum arity) generators~$f$.
\end{proof}

The preceding result says that every maximal centralising monoid on a
finite set has a singleton witness given by a minimum arity generator
of a minimal clone. Such functions are called \emph{minimal functions}
and were classified by Ivo Rosenberg~\cite{RosenbergMinimalClones}.

\begin{theorem}\label{thm:Rosenberg-minimal-clones}
Any minimal function~$f$ on a finite set~$\CarrierSet$ is of one of the
following types
\begin{enumerate}[(1)]
\item unary, and either a permutation of prime order or a
      non\dash{}permutation satisfying \m{f\circ f = f};
\item a ternary minority operation given as
      \m{f(x,y,z)=x\oplus y\oplus z} where
      \m{\algwops{\CarrierSet}{\oplus}} is an (Abelian) group such that
      every non\dash{}trivial element has order two;
\item a ternary majority operation;
\item a semiprojection of arity \m{2\leq k\leq \crd{\CarrierSet}}.
\end{enumerate}
\end{theorem}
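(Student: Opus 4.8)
The plan is to convert the minimality of the clone $\langle f\rangle$ into strong syntactic constraints on a least-arity generator, and then to read the four types off from those constraints. Fix a minimal clone $C$; since $C$ is an atom in the clone lattice it is generated by each of its non-projections, so choose among these a generator $f$ of least arity $k$. I would first isolate a \emph{minimality lemma}: (a) $f$ depends on all of its variables, for a dummy argument would present $C$ as generated by an operation of arity $k-1$ that is a projection exactly when $f$ is, contradicting either minimality of $k$ or the fact that $f$ (generating the non-trivial $C$) is not a projection; (b) every operation obtained from $f$ by identifying two of its variables is a projection, since otherwise it is a non-projection of arity $k-1$ lying in the atom $C$, hence itself generating $C$, again contradicting minimality of $k$; (c) consequently, iterating (b), $f(x,\dots,x)$ is a projection or a unary constant, and a unary constant $c_a\in C$ would force $C=\langle c_a\rangle$ and thus $k=1$, $f=c_a$ — so in every remaining case $f$ is idempotent.

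For $k=1$ the clone $C$ is governed by the transformation monoid $\{\mathrm{id},f,f^{2},\dots\}$: if $f$ is a permutation this is the cyclic group it generates, which is an atom among submonoids exactly when its order is prime; if $f$ is not a permutation then $f^{2}=f$ is forced, since otherwise some iterate $\langle f^{j}\rangle$ is a strictly smaller non-trivial clone. Either way we land in Type~(1), with the degenerate case $f=c_a$ included. For $k=2$ the minimality lemma gives $f(x,x)\approx x$, so $f$ is an idempotent binary operation, i.e.\ a binary semiprojection — Type~(4) with $k=2$.

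The substantial case is $k\ge 3$. Here $f$ is idempotent and each identification $f\rvert_{x_i=x_j}$ is a projection, which I call \emph{internal} if it returns the common value $x_i=x_j$ and \emph{external} otherwise. If no identification is external, then for $k\ge4$ the internal identities at two disjoint pairs, say $\{1,2\}$ and $\{3,4\}$, applied to a tuple of the shape $(a,a,b,b,\dots)$ with $a\neq b$ force $a=b$; hence $k=3$, and the three internal identities are exactly $f(x,x,y)\approx f(x,y,x)\approx f(y,x,x)\approx x$, so $f$ is a majority operation — Type~(3). If some identification is external, I would use further self-compositions of $f$, always invoking the minimality lemma to force any resulting operation of arity $\le k$ to be a projection, in order to show that $k=3$ and that the triple of projection positions attached to the pairs $\{1,2\},\{1,3\},\{2,3\}$ cannot be ``incoherent'': the surviving patterns are precisely either all three external, giving $f(x,x,y)\approx f(x,y,x)\approx f(y,x,x)\approx y$, a minority operation, or the pattern of a genuine ternary semiprojection onto a single fixed position. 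In the semiprojection case we obtain Type~(4), and the bound $k\le\crd{\CarrierSet}$ is immediate from the pigeonhole principle, since a proper semiprojection of larger arity would be a projection. In the minority case a final argument turns the minority identities, together with the collapsing of all ternary self-compositions of $f$ to $f$ (up to a permutation of variables) or to projections, into an Abelian group $\langle\CarrierSet;\oplus\rangle$ of exponent two with $f(x,y,z)=x\oplus y\oplus z$ — Type~(2). Collecting the cases yields the classification.

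The hard part is this external case for $k\ge 3$: ruling out arities above three, eliminating the incoherent patterns of projection positions at the three pairs (the one step that genuinely uses atomicity of the clone rather than merely minimal arity of the generator), and, in the minority subcase, upgrading the minority identities to the elementary abelian $2$-group representation of $f$. By contrast, the semiprojection subcase, the arity bound, and the bookkeeping needed to assemble the four types are comparatively soft.
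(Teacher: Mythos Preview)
The paper does not prove this theorem at all; it is quoted as Rosenberg's classification theorem and attributed to~\cite{RosenbergMinimalClones}. So there is no in-paper proof to compare against, and your outline (choose a least-arity non-projection generator, derive a \emph{\'Swierczkowski-type} minimality lemma, then case-split on the arity and on the projection pattern of the two-variable identifications) is indeed the standard route to the result.

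There is, however, a structural slip in your handling of the case $k\ge 3$ with at least one \emph{external} identification. You write that further self-compositions will ``show that $k=3$'' and then classify the ternary patterns as minority or ternary semiprojection. But that conclusion is false: a proper $k$-ary semiprojection with $k\ge 4$ is a legitimate minimal function and it has external identifications (identifying two non-distinguished coordinates returns the distinguished one). The correct form of \'Swierczkowski's lemma is that once every two-variable identification of $f$ is a projection, then for $k\ge 4$ these projections are forced to be onto a \emph{single common index}, so $f$ is a $k$-ary semiprojection, while only for $k=3$ do the extra majority and minority patterns appear. Your later remark about the pigeonhole bound $k\le\crd{\CarrierSet}$ shows you want semiprojections of higher arity in Type~(4), but that is inconsistent with having already argued $k=3$ in the external case. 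The fix is to reorganise that case: first, for $k\ge 4$, use your $(a,a,b,b,\dots)$ trick on suitably chosen pairs (mixing internal and external identifications) to force coherence of the projection indices and obtain a semiprojection; then, only for $k=3$, separate the minority pattern from the semiprojection pattern and carry out the genuinely hard step---which you correctly flag---of upgrading the minority identities to the elementary abelian $2$-group representation $f(x,y,z)=x\oplus y\oplus z$.
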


It is by Rosenberg's Classification Theorem that
Proposition~\ref{prop:generating-maximal-cent-monoids} gains strength.
To find all maximal centralising monoids on a given set~$\CarrierSet$
one only has to iterate over all possible functions~$f$ listed in
Theorem~\ref{thm:Rosenberg-minimal-clones}, to compute
\m{\Fn[1]{\cent{\set{f}}{}}}, and to take the maximal proper submonoids
of~\m{\Ops[1]} from this list. This approach has been carried out
successfully for \m{\CarrierSet=\set{0,1,2}}
in~\cite{MachidaRosenbergCentralisingMonoids}, and it motivates to
study restrictions of the Galois connection given by commutation to
some type of function from Theorem~\ref{thm:Rosenberg-minimal-clones}
and unary functions, as it was done
in~\cite{MachidaRosenbergGoldsternCentralisingMonoids,MachidaRosenbergCentralisingMonoidsBinaryIdempotentOps},
again for \m{\crd{\CarrierSet}=3}.
\par

With the aim of pushing the work started
in~\cite{MachidaRosenbergCentralisingMonoids} to the next level, i.e.,
to \m{\CarrierSet=\set{0,1,2,3}}, we consider in this paper the Galois
correspondence given by commutation, restricted to
\m{\Maj\times\Ops[1]}.
The other cases, that is, semiprojections, minority functions and unary
operations will have to be treated separately.
\par

When studying centralising monoids with specific witnesses
\m{F\subs\Ops}, it is of course clear that
\m{\Fn[1]{\cent{F}{}} =\bigcap_{f\in F}\Fn[1]{\cent{\set{f}}{}}}, so
Observation~\ref{obs:commutation-unary} certainly applies here.
However, for \m{F\subs\Maj} the latter characterisation can be
simplified even more, as was already observed
in~\cite[Section~3, p.~207 et
seqq.]{MachidaRosenbergGoldsternCentralisingMonoids}. The
simplification is connected to the set \m{\sigma\subs\CarrierSet[3]}
of triples with three pairwise distinct entries, \ie,
\m{\sigma\defeq\lset{\apply{x,y,z}\in\CarrierSet[3]}{x\neq y\neq z\neq x}}.
Obviously, if \m{f\in\Maj} is a majority operation, \m{s\in\Ops[1]} and
\m{\mathff{x}\in \CarrierSet[3]\setminus\sigma}, then \m{s\circ
\mathff{x}\in\CarrierSet[3]\setminus\sigma}, too, and therefore the
condition \m{s\apply{f\apply{\mathff{x}}} = f\apply{s\circ \mathff{x}}} is trivially
fulfilled by the majority law. This implies the following observation:

\begin{observation}\label{obs:comm-maj-on-sigma}
For a majority operation \m{f\in\Maj} and any \m{s\in\Ops[1]} on some
set~$\CarrierSet$, we have \m{s\in\cent{\set{f}}} if and only if
\m{s\apply{f\apply{\mathff{x}}} = f\apply{s\circ \mathff{x}}} holds for all
\m{\mathff{x}\in\sigma}.
\end{observation}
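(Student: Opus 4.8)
The plan is to reduce everything to Observation~\ref{obs:commutation-unary}, which already characterises \m{s\in\cent{\set{f}}} by the equality \m{s\apply{f\apply{\mathff{x}}} = f\apply{s\circ \mathff{x}}} holding for \emph{all} \m{\mathff{x}\in\CarrierSet[3]}, together with the elementary fact that a majority operation is pinned down on every triple having a repeated coordinate. Since \m{\sigma\subs\CarrierSet[3]}, the `only if' direction is immediate. For the converse I would assume the displayed equality for every \m{\mathff{x}\in\sigma} and show that it then holds for every \m{\mathff{x}\in\CarrierSet[3]\setminus\sigma} as well, whereupon Observation~\ref{obs:commutation-unary} finishes the argument.

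So fix \m{\mathff{x}=\apply{x_0,x_1,x_2}\in\CarrierSet[3]\setminus\sigma}. By the definition of~$\sigma$ there are positions \m{0\leq j<l<3} with \m{x_j=x_l}, and then, according to which pair \m{\set{j,l}} is repeated, one of the three majority identities satisfied by~$f$ yields \m{f\apply{\mathff{x}}=x_j=x_l}. Applying~$s$ to these two equal coordinates gives \m{s(x_j)=s(x_l)}, so the triple \m{s\circ\mathff{x}=\apply{s(x_0),s(x_1),s(x_2)}} has a repetition at the \emph{same} pair of positions \m{\set{j,l}}; hence the majority law again forces \m{f\apply{s\circ\mathff{x}}=s(x_j)}. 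Consequently \m{s\apply{f\apply{\mathff{x}}}=s(x_j)=f\apply{s\circ\mathff{x}}}, i.e.\ the commutation equality holds automatically on \m{\CarrierSet[3]\setminus\sigma}, and combining this with the hypothesis on~$\sigma$ and Observation~\ref{obs:commutation-unary} gives \m{s\in\cent{\set{f}}}.

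There is no genuine obstacle here: the statement only records that, for a majority operation, the commutation condition carries information solely on the triples of pairwise distinct entries, because any unary map sends a repeated coordinate to a repeated coordinate and the majority law then collapses both sides of the equality to the same value. The single point meriting care is the observation that the repetition in \m{\mathff{x}} and the induced repetition in \m{s\circ\mathff{x}} sit at the \emph{same} positions, so that \m{f\apply{s\circ\mathff{x}}} equals \m{s} applied to \m{f\apply{\mathff{x}}} rather than to some other entry; once this is spelled out, the verification is complete.
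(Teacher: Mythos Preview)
Your proof is correct and follows essentially the same approach as the paper: the paper observes (in the paragraph immediately preceding the statement) that for \m{\mathff{x}\in\CarrierSet[3]\setminus\sigma} one has \m{s\circ\mathff{x}\in\CarrierSet[3]\setminus\sigma} as well, whence the commutation equality is trivially satisfied by the majority law, and then appeals to Observation~\ref{obs:commutation-unary}. Your version is simply more explicit about why the two sides agree, in particular your remark that the repetition occurs at the \emph{same} pair of positions in \m{\mathff{x}} and \m{s\circ\mathff{x}}, which is exactly what makes the majority law yield matching values.
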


In fact, this characterisation aligns nicely with the `relevant' part
of a majority operation: due to the majority law any majority operation
\m{f\in\Maj} is uniquely
determined by its restriction \m{f\Restriction_{\sigma}}. The number of
triples in \m{\sigma} amounts to \m{k\cdot (k-1)\cdot (k-2)} for a
\nbdd{k}element carrier set \m{\CarrierSet}. Hence, there exist
\m{k^{k\cdot(k-1)\cdot (k-2)}} such functions on a \nbdd{k}element set.
Even though this number is smaller than \m{\crd{\Ops[1]} = k^{k^3}},
we shall see in the next sections that it is critical for our task to
find more efficient ways of understanding the commutation Galois
correspondence than blindly iterating over \emph{all} majority
operations and using Observation~\ref{obs:comm-maj-on-sigma} to check
which of them commutes with which unary functions.

\section{Method}\label{sect:method}
It is possible to algorithmically enumerate the set (and moreover the lattice) of all Galois
closed sets of a Galois connection, if the inducing binary relation (in
our case \m{{\commuteswith}\subs\Maj\times\Ops[1]}) is
explicitly given as a cross table (that is, if the characteristic
function of the binary relation has been tabulated). This is one of the
corner stones of \emph{formal concept analysis}, see~\cite{GaWiFCA},
where such tables are referred to as \emph{formal contexts}. More
precisely, a formal context is a triple \m{\K=\apply{G,M,I}}
consisting of sets~$G$ of so\dash{}called \emph{objects} and~$M$ of
so\dash{}called \emph{attributes}, and an \emph{incidence relation}
\m{I\subs G\times M} inducing the Galois connection between them.
The known algorithms for producing the
collection~$\mathcal{L}$ of all Galois closed sets have a time
complexity of \m{\LandauO{\crd{G}\cdot\crd{M}^2}} per closed set (with
improvements under certain side conditions), see~\cite[Chapter 2, p.~39 et seqq.]{GanterObiedkovConceptualExploration}
for details. Thus, the total worst\dash{}case time complexity is given
by \m{\LandauO{\crd{\mathcal{L}}\cdot\crd{G}\cdot\crd{M}^2}}.
\par

For the present Galois connection the corresponding formal context
would be a $k^{k\cdot (k-1)\cdot (k-2)}\times k^k$ table, where~$k$ is
the cardinality of the carrier set~$\CarrierSet$. For
$k=4$, this is already a $4^{24}\times 256$ Boolean matrix, where the
enumeration of all $2\cdot 10^{14}<4^{24}<3\cdot 10^{14}$ rows is only
possible with very enhanced computational means. In fact, one would need
$256\,\mathrm{TB}$ to simply store it if one cell would take only one
bit (not byte; note that a standard integer nowadays normally takes
$64\,\mathrm{bit} = 8\,\mathrm{B}$), and the computation of the Galois
closures from such a table would become totally infeasible.
Thus, the main challenge that needs to be solved to produce a list of
all centralising monoids with majority witnesses on a
four\dash{}element set is to obtain a smaller context containing the
same information. Fortunately, it is typical that (especially when
there is a large discrepancy between the sizes of~$\crd{G}$
and~$\crd{M}$) formal contexts contain a lot of redundant bits, and we
shall see in Section~\ref{sect:results} that, for our problem, a much
smaller context suffices to enumerate all closed sets.
This is often due to the following two situations: if $g,h\in G$ are
incident with the same members of~$M$, i.e.,
\m{\set{g}'\defeq \lset{m\in M}{(g,m)\in I}} equals
\m{\set{h}'=\lset{m\in M}{(h,m)\in I}}, then only one of~$g$
and~$h$ needs to
be retained. In our example, this happens when two majority operations
commute with exactly the same unary operations. Identifying elements
of~$G$ when such equalities happen and reducing the context to a set of
representatives of the equivalence classes is known as \emph{object
clarification} in formal concept analysis. It can, of course, be dually
performed for~$M$, in which case one speaks of \emph{attribute
clarification}. Moreover, it can happen that some object \m{g\in G}
satisfies \m{\set{g}' = H' \defeq \bigcap_{h\in H}\set{h}'} for some
set \m{H\subs G\setminus\set{g}}. Then, the row of the context
belonging to~\m{g} can be dispensed with if the elements in~$H$ are
kept. Seen from a more abstract point of view, the closure \m{\set{g}'}
is \nbdd{\bigcap}reducible in the lattice of all Galois closures, and
so one only needs to keep elements \m{g\in G}
belonging to \nbdd{\bigcap}irreducible closures. Purifying a formal
context by removing \nbdd{\bigcap}reducible objects from~$G$ is known
as \emph{object reduction}, the dual operation with respect to~$M$ as
\emph{attribute reduction}.
\par

For the problem of determining centralising monoids, we shall
understand the set of majority operations on~\m{\CarrierSet} as
\emph{objects}~$G$ and the set of unary operations~\m{\Ops[1]} as
\emph{attributes}~$M$. The roles of objects and attributes are, of
course, completely arbitrary. Since~\m{G} is much bigger than~\m{M} in
our case, attribute clarification and reduction is much less important
than object clarification (and reduction). However, as it turns out, a
somewhat combined approach will be the key to success.
For every \m{s\in M=\Ops[1]} we shall characterise (and subsequently
enumerate) \m{\set{s}' = \cent{\set{s}}\cap \Maj}, and for every \m{f\in
\set{s}'} we shall compute and store
\m{\set{f}' = \Ops[1]\cap\cent{\set{f}}}. This may seem like an overly
convoluted approach to just obtain some rows of the context; however,
as we shall show in Section~\ref{sect:results}, when putting
all \m{\set{f}'} from all \m{s\in\Ops[1]} together, these are
sufficiently many rows to give us the full picture. In fact, by this
procedure, we shall only miss the rows \m{\set{f}'} for
those~\m{f\in\Maj} that do not commute with any unary operation besides
the trivial ones, i.e., those that commute with every majority
operation. One such row can simply be added manually to the context.
\par
So far, there is no guarantee that we will consider substantially less
majority operations than all \m{k^{k(k-1)(k-2)}} (i.e.\ \m{4^{24}} for
\m{k=4}) by this approach. Notably,
we have to identify (and ignore) those (trivial) unary operations that
commute with every majority operation. In general, it
will be critical to observe when we characterise \m{\set{s}'} for each
unary \m{s\in\Ops[1]} that the size of \m{\set{s}'} is not too big,
where `not too big' means feasible to be enumerated in a reasonable
amount of time with practical resources. As it turns out, for
\m{\CarrierSet = \set{0,1,2,3}}, we will be lucky in this respect.
\par
As a side product of considering and characterising \m{\set{s}'} for
each (non\dash{}trivial) \m{s\in\Ops[1]} individually, we shall also
perform some attribute clarifications, although for one portion of the
unary maps we have not executed clarifications to the full extent as it
is not necessary for our task.

\section{Characterisation of commutation and attribute clarification}%
\label{sect:attribute-clarification}%

We proceed, as explained in Section~\ref{sect:method}, by characterising
\m{\cent{\set{s}}\cap\Maj} for each \m{s\in\Ops[1]} over \m{\CarrierSet
= \set{0,1,2,3}}. In this respect we
partition the set of attributes, \m{\Ops[1]}, according to the number
of elements in the image of its members. Moreover, the attributes with
full image, \ie\ permutations, will be partitioned further, depending
on the number of fixed points. It was already observed by
Harnau~\cite[p.~340 et seqq.]{Harnau1DefVertauschbarkeitMaximalitaet}
that the latter is an important quantitative aspect of unary functions
in connection with commutation.
\par

\subsection{One-element image}\label{subsect:constants}%
A function \m{s\in\Ops[1]} such that \m{\crd{\im\apply{s}}=1} is
constant, and therefore automatically commutes with any idempotent
operation, and any majority operation in particular. Namely, if
\m{f\apply{x,\dotsc,x}\approx x}, then~$f$ preserves~$\set{a}$ for
every \m{a\in\CarrierSet}, and preserving a singleton relation~$\set{a}$
is equivalent to commuting with the corresponding unary constant
operation~$c_a$. Furthermore, projections commute with any
operation, which holds true for the identity operation, in particular.
Thus, we always get
\m{\Consts\cup\set{\id_{\CarrierSet}}\subs \cent{\set{f}}} for idempotent
operations~$f$, and so all constant operations will be identified with
the identity operation by attribute clarification (and be ignored
henceforth).  It will become clear from the characterisations given in
the other subsections that apart from the identity and constants no
other unary operation commutes with all majority operations,
i.e.~\m{\Op[1]{4}\cap\cent{\Maj[4]} =
\set{\id_{4},c_0,c_1,c_2,c_3}}.

\subsection{Two-element image}\label{subsect:two-element-image}%
In this subsection we consider a function \m{s\in\Ops[1]} such that
\m{\im\apply{s}=\set{\alpha,\beta}} with \m{\alpha\neq\beta}. Suppose that
\m{k=\crd{\CarrierSet}}. We present two lemmata, the first one regarding
\m{\crd{s^{-1}\fapply{\set{\alpha}}}=k-1}, the other one concerning
\m{\crd{s^{-1}\fapply{\set{\alpha}}}=k-2}. For \m{k=4} this describes the
complete picture.
\par

\sbox{\citationbox}{%
\bfseries cf.~\cite[Lemma~3.2]{MachidaRosenbergGoldsternCentralisingMonoids}}
\begin{lemma}[{\usebox{\citationbox}}]%
\label{lem:im=2--k-1}
Let \m{f\in\Ops[3]} be a majority operation on \m{\CarrierSet} with
\m{\crd{\CarrierSet}=k} and let \m{s\in\Ops[1]} be such that
\m{\im\apply{s} = \set{\alpha,\beta}},
\m{\crd{s^{-1}\fapply{\set{\alpha}}}=k-1}
and \m{\crd{s^{-1}\fapply{\set{\beta}}}=1}.
Then \m{f\commuteswith s} if and only if
\m{f\apply{\mathff{x}}\in s^{-1}\fapply{\set{\alpha}}}
for all \m{\mathff{x}\in\sigma}.
\end{lemma}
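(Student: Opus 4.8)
The plan is to invoke Observation~\ref{obs:comm-maj-on-sigma} and then to show that the equations it demands collapse to a single, easily readable condition. Since \m{\crd{s^{-1}\fapply{\set{\beta}}} = 1}, I would write \m{s^{-1}\fapply{\set{\beta}} = \set{b}}; then \m{s^{-1}\fapply{\set{\alpha}} = \CarrierSet \setminus \set{b}}, and for \m{y \in \CarrierSet} one has \m{s(y) = \alpha} precisely when \m{y \neq b}. By Observation~\ref{obs:comm-maj-on-sigma}, \m{f \commuteswith s} is equivalent to \m{s\apply{f\apply{\mathff{x}}} = f\apply{s\circ\mathff{x}}} holding for every \m{\mathff{x}\in\sigma}, so it remains only to understand the right\dash{}hand side.

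The crucial point is that for every \m{\mathff{x} = \apply{x_0,x_1,x_2}\in\sigma} the tuple \m{s\circ\mathff{x}} has at least two coordinates equal to~\m{\alpha}: all its coordinates lie in \m{\set{\alpha,\beta}}, a coordinate \m{s(x_i)} can equal~\m{\beta} only if \m{x_i = b}, and the entries \m{x_0,x_1,x_2} being pairwise distinct, at most one of them equals~\m{b}. Hence, in whatever position the single possible \m{\beta} occurs, \m{s\circ\mathff{x}} is the triple \m{\apply{\alpha,\alpha,\alpha}} or a rearrangement of \m{\apply{\alpha,\alpha,\beta}}; since \m{f} is a majority operation and \m{\alpha\neq\beta}, in all of these cases \m{f\apply{s\circ\mathff{x}} = \alpha}. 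This is exactly where the hypothesis \m{f\in\Maj} enters: it makes the position of~\m{b} inside \m{\mathff{x}} irrelevant.

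Putting the two observations together, \m{f\commuteswith s} holds if and only if \m{s\apply{f\apply{\mathff{x}}} = \alpha} for all \m{\mathff{x}\in\sigma}, which by the choice of~\m{b} means \m{f\apply{\mathff{x}} \neq b}, i.e.\ \m{f\apply{\mathff{x}}\in s^{-1}\fapply{\set{\alpha}}}, for all \m{\mathff{x}\in\sigma} --- precisely the assertion of the lemma. I do not expect any genuine difficulty here; the only steps deserving attention are the short case analysis over which coordinate of \m{\mathff{x}} (if any) is sent to~\m{\beta} and the remark that the majority law makes each such case yield the value~\m{\alpha}. For completeness one might note that when \m{\crd{\CarrierSet}\leq 2} one has \m{\sigma=\emptyset}, so both sides of the equivalence hold vacuously.
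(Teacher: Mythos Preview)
Your proof is correct and follows essentially the same route as the paper: invoke Observation~\ref{obs:comm-maj-on-sigma}, observe that for any \m{\mathff{x}\in\sigma} at least two entries of \m{s\circ\mathff{x}} equal~\m{\alpha} (since the \nbdd{\beta}preimage is a singleton), apply the majority law to get \m{f(s\circ\mathff{x})=\alpha}, and conclude that commutation reduces to \m{s(f(\mathff{x}))=\alpha}. Your extra remarks (naming the element~\m{b} and the vacuous case \m{\crd{\CarrierSet}\leq 2}) are harmless elaborations of the same argument.
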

\begin{proof}
By Observation~\ref{obs:comm-maj-on-sigma} the commutation condition only
has to be checked on~\m{\sigma}. If \m{\apply{a,b,c}\in\sigma}, then at
least two of \m{s\apply{a}}, \m{s\apply{b}} and \m{s\apply{c}} are equal to
\m{\alpha} (because either one or none of \m{a}, \m{b} and \m{c} belongs to
\m{s^{-1}\fapply{\set{\beta}}}). So,
\m{f\apply{s\apply{a},s\apply{b},s\apply{c}} =\alpha}, whence the
commutation condition becomes
\m{s\apply{f\apply{a,b,c}} =
f\apply{s\apply{a},s\apply{b},s\apply{c}}=\alpha} for all
\m{\apply{a,b,c}\in\sigma}.
\end{proof}

For \m{k=4} there are four possibilities to choose the singleton
\m{s^{-1}\fapply{\set{\beta}}} as in Lemma~\ref{lem:im=2--k-1}; the
preimage \m{s^{-1}\fapply{\set{\alpha}}}, which completely determines the
condition for commutation, is then the complement.
We list the four respective conditions for \m{\CarrierSet=\set{0,1,2,3}}
below:
\begin{align}
f\apply{\mathff{x}}\in\set{1,2,3}
\text{ for all } \mathff{x}\in\sigma
\tag{A1}\\
f\apply{\mathff{x}}\in\set{0,2,3}
\text{ for all } \mathff{x}\in\sigma
\tag{A2}\\
f\apply{\mathff{x}}\in\set{0,1,3}
\text{ for all } \mathff{x}\in\sigma
\tag{A3}\\
f\apply{\mathff{x}}\in\set{0,1,2}
\text{ for all } \mathff{x}\in\sigma
\tag{A4}
\end{align}
\par

In order to clarify the context, we shall replace all the equivalent unary
functions \m{s\in\Ops[1]} with exactly two distinct values one of which is
attained on a three-element set \m{B} by the condition (A\m{i}) involving
\m{B}.
\par

Note that, even though the characterising condition given
in Lemma~\ref{lem:im=2--k-1} is the easiest non\dash{}trivial
one in this paper, it is the most time consuming from the computational
perspective.
Namely, $(k-1)^{\crd{\sigma}} = (k-1)^{k(k-1)(k-2)}$ majority
operations need to be enumerated per unary operation, which for \m{k=4}
is \m{3^{24}=282\,429\,536\,481}.
\par

Next we consider the other case that can happen for unary functions with a
two\dash{}element range on a four\dash{}element domain. Here
\m{2^{3!(k-2)}\cdot
(k-2)^{\crd{\sigma}-3!(k-2)}}, that is,
\m{2^{6(k-2)}\cdot(k-2)^{(k-2)(k+2)(k-3)}}, majority operations need to
be considered per unary function. For \m{k=4} this amounts
to only~\m{2^{24}=16\,777\,216} functions.
\begin{lemma}\label{lem:im=2--k-2}
Let \m{f\in\Ops[3]} be a majority operation on \m{\CarrierSet} with
\m{\crd{\CarrierSet}=k} and let \m{s\in\Ops[1]} be such that
\m{\im\apply{s} = \set{\alpha,\beta}}, \m{\alpha\neq\beta},
\m{\crd{s^{-1}\fapply{\set{\alpha}}}=k-2}
and \m{\crd{s^{-1}\fapply{\set{\beta}}}=2}.
Then \m{f\commuteswith s} holds if and only if
\m{f\apply{a,b,c}\in s^{-1}\fapply{\set{\beta}}}
for all triples \m{\apply{a,b,c}\in\sigma} such that
\m{s^{-1}\fapply{\set{\beta}}\subs\set{a,b,c}} and
\m{f\apply{a,b,c}\in s^{-1}\fapply{\set{\alpha}}} else.
\end{lemma}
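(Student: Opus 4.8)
The plan is to reduce everything, via Observation~\ref{obs:comm-maj-on-sigma}, to checking the commutation identity \m{s\apply{f\apply{\mathff{x}}} = f\apply{s\circ\mathff{x}}} for triples \m{\mathff{x}\in\sigma} only, and then to perform a short case analysis. Write \m{B\defeq s^{-1}\fapply{\set{\beta}}}, so that \m{\crd{B}=2} and \m{s^{-1}\fapply{\set{\alpha}} = \CarrierSet\setminus B}. Fix an arbitrary \m{\apply{a,b,c}\in\sigma}; since \m{a,b,c} are pairwise distinct and \m{\crd{B}=2}, at most two of them lie in \m{B}, and — this is the one point that needs a moment's attention — if exactly two of them lie in \m{B}, then those two are precisely the elements of \m{B}, i.e.\ \m{B\subs\set{a,b,c}}. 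Hence there are only two cases to distinguish, and the value \m{f\apply{s\apply{a},s\apply{b},s\apply{c}}} is forced by the majority law in each of them.

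In the first case, at most one of \m{a,b,c} belongs to \m{B}, so at least two of \m{s\apply{a},s\apply{b},s\apply{c}} equal~\m{\alpha}; by the majority law \m{f\apply{s\apply{a},s\apply{b},s\apply{c}}=\alpha}, so the commutation identity for this triple is equivalent to \m{s\apply{f\apply{a,b,c}}=\alpha}, i.e.\ to \m{f\apply{a,b,c}\in s^{-1}\fapply{\set{\alpha}}}. In the second case, \m{B\subs\set{a,b,c}}, so two of the values \m{s\apply{a},s\apply{b},s\apply{c}} equal~\m{\beta} and one equals~\m{\alpha}; the majority law gives \m{f\apply{s\apply{a},s\apply{b},s\apply{c}}=\beta}, and the commutation identity becomes \m{s\apply{f\apply{a,b,c}}=\beta}, i.e.\ \m{f\apply{a,b,c}\in s^{-1}\fapply{\set{\beta}}=B}.

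Combining the two cases, the identity \m{s\apply{f\apply{\mathff{x}}}=f\apply{s\circ\mathff{x}}} holds for every \m{\mathff{x}\in\sigma} if and only if \m{f\apply{a,b,c}\in B} for all \m{\apply{a,b,c}\in\sigma} with \m{B\subs\set{a,b,c}} and \m{f\apply{a,b,c}\in s^{-1}\fapply{\set{\alpha}}} for all remaining triples of~\m{\sigma}; by Observation~\ref{obs:comm-maj-on-sigma} this is exactly \m{f\commuteswith s}, which is the asserted characterisation. The argument runs entirely parallel to the proof of Lemma~\ref{lem:im=2--k-1}, so there is no genuine obstacle; the only subtlety is the observation that a two\dash{}element subset of the three pairwise distinct entries \m{a,b,c} must coincide with \m{B}, which is what lets us phrase the \m{\beta}\dash{}case purely through the inclusion \m{s^{-1}\fapply{\set{\beta}}\subs\set{a,b,c}} as in the statement.
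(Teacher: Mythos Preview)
Your proof is correct and follows essentially the same approach as the paper: reduce via Observation~\ref{obs:comm-maj-on-sigma} to triples in~$\sigma$, then split into the two cases according to whether $s^{-1}\fapply{\set{\beta}}\subs\set{a,b,c}$ or not, using the majority law to compute $f(s(a),s(b),s(c))$ in each case. You are slightly more explicit than the paper in justifying why the dichotomy is exhaustive (noting that two distinct entries lying in the two-element set~$B$ forces $B\subs\set{a,b,c}$), but the argument is otherwise identical.
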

\begin{proof}
As in the proof of Lemma~\ref{lem:im=2--k-1}, it suffices to consider
the commutation condition on triples \m{\apply{a,b,c}\in\sigma}. If
\m{\set{a,b,c}\sups s^{-1}\fapply{\set{\beta}}}, then
\m{f\apply{s\apply{a},s\apply{b},s\apply{c}}=\beta}; otherwise, we have
\m{\crd{\set{a,b,c}\cap s^{-1}\fapply{\set{\beta}}}\leq 1}, so
\m{\crd{\set{a,b,c}\cap s^{-1}\fapply{\set{\alpha}}}\geq 2}, which implies
\m{f\apply{s\apply{a},s\apply{b},s\apply{c}}=\alpha}. Therefore, for all
triples \m{\apply{a,b,c}\in\sigma} satisfying that
\m{s^{-1}\fapply{\set{\beta}}\subs\set{a,b,c}}, the commutation condition
becomes \m{s\apply{f\apply{a,b,c}}=\beta}; for all other triples it turns
into \m{s\apply{f\apply{a,b,c}}=\alpha}.
\end{proof}

For the case that \m{k=4}, the characterisation in
Lemma~\ref{lem:im=2--k-2} can be rephrased a little bit more concretely.
\begin{corollary}\label{cor:im=2--k-2}
Suppose that \m{\CarrierSet = \set{a,b,c,d}}, where
\m{\crd{\CarrierSet}=4}, \m{f\in\Ops[3]} is a majority function and
\m{s\in\Ops[1]} satisfies \m{\im\apply{s}=\set{\alpha,\beta}} with
\m{\alpha\neq\beta},
\m{s^{-1}\fapply{\set{\alpha}}=\set{a,b}}
and \m{s^{-1}\fapply{\set{\beta}}=\set{c,d}}.
Then \m{f\commuteswith s} if and only if \m{f\apply{x,y,z}\in\set{c,d}} for
all \m{\apply{x,y,z}\in\sigma} such that \m{\set{c,d}\subs\set{x,y,z}} and
\m{f\apply{x,y,z}\in\set{a,b}} for all \m{\apply{x,y,z}\in\sigma} such that
\m{\set{a,b}\subs\set{x,y,z}}.
\end{corollary}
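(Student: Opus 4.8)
The plan is simply to apply Lemma~\ref{lem:im=2--k-2} and to rephrase its ``else'' clause in the concrete form asserted here. First I would observe that the hypotheses of the corollary are exactly those of Lemma~\ref{lem:im=2--k-2} with \m{k=4}, \m{s^{-1}\fapply{\set{\alpha}}=\set{a,b}} (a two\dash{}element set) and \m{s^{-1}\fapply{\set{\beta}}=\set{c,d}}; hence the lemma gives that \m{f\commuteswith s} holds if and only if \m{f\apply{x,y,z}\in\set{c,d}} for every \m{\apply{x,y,z}\in\sigma} with \m{\set{c,d}\subs\set{x,y,z}}, and \m{f\apply{x,y,z}\in\set{a,b}} for every other triple in \m{\sigma}.

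It then remains only to identify those ``other'' triples. This is the one elementary point of the argument: if \m{\apply{x,y,z}\in\sigma} then \m{\set{x,y,z}} is a three\dash{}element subset of the four\dash{}element set \m{\set{a,b,c,d}}, so it omits exactly one of the four elements; it contains \m{\set{c,d}} precisely when the omitted element belongs to \m{\set{a,b}}, and it contains \m{\set{a,b}} precisely when the omitted element belongs to \m{\set{c,d}}. Since exactly one element is omitted, exactly one of these two alternatives holds, so ``does not contain \m{\set{c,d}}'' and ``contains \m{\set{a,b}}'' single out the same triples in \m{\sigma}. Substituting this description of the ``else'' case into the statement of Lemma~\ref{lem:im=2--k-2} yields precisely the corollary.

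I expect no genuine obstacle here: the result is a routine specialisation of an already proved lemma together with the counting remark above. The only thing to be careful about is that the split of \m{\sigma} into triples containing \m{\set{c,d}} and triples containing \m{\set{a,b}} is both exhaustive and disjoint, which is exactly the four\dash{}element count just given and is the reason the reformulation is particular to \m{\crd{\CarrierSet}=4}.
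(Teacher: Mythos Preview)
Your proposal is correct and follows essentially the same route as the paper: both invoke Lemma~\ref{lem:im=2--k-2} and then observe that on a four\dash{}element set a triple in~$\sigma$ omits exactly one element, so failing to contain $\{c,d\}$ is the same as containing $\{a,b\}$. Your version states the exhaustive\slash disjoint split a touch more explicitly, but the argument is the same.
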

\begin{proof}
For every triple \m{\apply{x,y,z}\in\sigma} where
\m{\set{c,d}\not\subs\set{x,y,z}}, the set \m{\set{x,y,z}} has to miss one
of \m{c} or \m{d}. In order to contain three distinct elements on
\m{\CarrierSet=\set{a,b,c,d}}, it has to contain the other one, as well as
both elements \m{a} and \m{b}. For this reason the claim follows from
Lemma~\ref{lem:im=2--k-2}.
\end{proof}

For \m{\CarrierSet=\set{0,1,2,3}}, Corollary~\ref{cor:im=2--k-2} describes exactly
\m{\binom{4}{2}/2=3} concrete conditions:
\begin{align}
&f\apply{x,y,z}\in\set{0,1}
\text{ for all }\apply{x,y,z}\in\sigma \text{ such that }
\set{0,1}\subs\set{x,y,z}
\text{ and }\notag{}\\
&f\apply{x,y,z}\in\set{2,3}
\text{ for all }\apply{x,y,z}\in\sigma \text{ such that }
\set{2,3}\subs\set{x,y,z}
\tag{A5}\\
&f\apply{x,y,z}\in\set{0,2}
\text{ for all }\apply{x,y,z}\in\sigma \text{ such that }
\set{0,2}\subs\set{x,y,z}
\text{ and }\notag{}\\
&f\apply{x,y,z}\in\set{1,3}
\text{ for all }\apply{x,y,z}\in\sigma \text{ such that }
\set{1,3}\subs\set{x,y,z}
\tag{A6}\\
&f\apply{x,y,z}\in\set{0,3}
\text{ for all }\apply{x,y,z}\in\sigma \text{ such that }
\set{0,3}\subs\set{x,y,z}
\text{ and }\notag{}\\
&f\apply{x,y,z}\in\set{1,2}
\text{ for all }\apply{x,y,z}\in\sigma \text{ such that }
\set{1,2}\subs\set{x,y,z}
\tag{A7}
\end{align}
We shall use these to replace unary functions \m{s} with exactly two
values \m{\alpha} and \m{\beta}, whose kernel partition is
\m{\set{\set{0,1},\set{2,3}}}, \m{\set{\set{0,2},\set{1,3}}} and
\m{\set{\set{0,3},\set{1,2}}}, respectively.
\par

\subsection{Three-element image}\label{subsect:three-element-image}%
For unary functions on a four\dash{}element domain with
three\dash{}element range the analysis of the commutation condition
needs to go a bit more into detail to be satisfactory.
There are \m{\binom{4}{3}\cdot\binom{3}{1}\cdot\binom{4}{2}\cdot 2!=144} functions
of this type as there are \m{\binom{4}{3}} ways to choose the image,
\m{\binom{3}{1}} ways to choose which element in the image has a
two\dash{}element preimage, \m{\binom{4}{2}} ways to choose
this preimage, and \m{2!} possibilities to arrange the remaining values.
\par

\begin{lemma}\label{lem:s-im-3-12+12-tuples}
Assume \m{\crd{\CarrierSet}=4} and let \m{s\in\Ops[1]} with
\m{\crd{\im{s}} = 3}, \m{\im{s} = \set{\alpha,\beta,\gamma}},
\m{\CarrierSet = \set{u,v,x,y}}
be such that \m{s(u) =s(v) = \alpha}, \m{s(x)=\beta}, \m{s(y)=\gamma}.
Let \m{t\in\CarrierSet} be such that
\m{\CarrierSet=\set{\alpha,\beta,\gamma,t}} and consider an arbitrary
triple \m{(a,b,c)\in\sigma}.
If \m{t\in\set{a,b,c}}, then there is no \m{\mathff{x}\in\CarrierSet[3]} such
that \m{s\circ\mathff{x}=\apply{a,b,c}}.
If, otherwise, \m{t\notin\set{a,b,c}}, then
\m{(a,b,c)=(\alpha,\beta,\gamma)\circ\pi} for some index
permutation $\pi\in \Sym(3)$ and we have a two\dash{}element preimage
\m{\lset{\mathff{x}\in\CarrierSet[3]}{s\circ\mathff{x}=(a,b,c)}
=\set{(u,x,y)\circ\pi, (v,x,y)\circ\pi}}.
\end{lemma}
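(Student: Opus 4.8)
The plan is to describe, for a fixed triple $(a,b,c)\in\sigma$, the entire preimage $\lset{\mathff{x}\in\CarrierSet[3]}{s\circ\mathff{x}=(a,b,c)}$ directly from the fibre structure of~$s$. The facts I would exploit are that $t$ is the \emph{unique} element of $\CarrierSet$ outside $\im s=\set{\alpha,\beta,\gamma}$, and that the three fibres of~$s$ are $s^{-1}\fapply{\set{\alpha}}=\set{u,v}$, $s^{-1}\fapply{\set{\beta}}=\set{x}$ and $s^{-1}\fapply{\set{\gamma}}=\set{y}$, the first of size two and the latter two singletons. Everything then reduces to counting choices coordinate by coordinate.

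First I would dispose of the case $t\in\set{a,b,c}$: if, say, $a=t$, then since $t\notin\im s$ no element of $\CarrierSet$ is sent by~$s$ to~$a$, so no $\mathff{x}$ can satisfy $s\circ\mathff{x}=(a,b,c)$; the same reasoning applies if $t$ occurs as the second or third entry, so the preimage is empty. For the remaining case $t\notin\set{a,b,c}$, the three entries of $(a,b,c)$ are pairwise distinct (because $(a,b,c)\in\sigma$) and all lie in $\CarrierSet\setminus\set{t}=\set{\alpha,\beta,\gamma}$, so by cardinality $\set{a,b,c}=\set{\alpha,\beta,\gamma}$; hence $(a,b,c)=(\alpha,\beta,\gamma)\circ\pi$ for exactly one index permutation $\pi\in\Sym(3)$, using the re-indexing convention $\mathff{z}\circ\pi=\apply{z_{\pi(i)}}_{i\in 3}$.

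It remains to identify the preimage with $\set{(u,x,y)\circ\pi,(v,x,y)\circ\pi}$. Since $s\circ(u,x,y)=(\alpha,\beta,\gamma)=s\circ(v,x,y)$, associativity of composition gives $s\circ((u,x,y)\circ\pi)=(\alpha,\beta,\gamma)\circ\pi=(a,b,c)$, and likewise for $(v,x,y)\circ\pi$, so both tuples lie in the preimage; they are distinct because $\pi$ is surjective while $(u,x,y)\neq(v,x,y)$. Conversely, given any $\mathff{x}$ with $s\circ\mathff{x}=(\alpha,\beta,\gamma)\circ\pi$, the $i$-th coordinate of $\mathff{x}$ must lie in the $s$-fibre of the $\pi(i)$-th entry of $(\alpha,\beta,\gamma)$; exactly one index $i$ satisfies $\pi(i)=0$, where that fibre is $\set{u,v}$ (two choices), while at the other two indices the fibre is a singleton, so those coordinates are forced and coincide with the corresponding coordinates of $(u,x,y)\circ\pi=(v,x,y)\circ\pi$. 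Thus the preimage has exactly two members, which are the two tuples already exhibited. None of the steps presents a genuine difficulty; the only point requiring care is keeping the re-indexing bookkeeping consistent, so that the \emph{same} $\pi$ appears in ``$(a,b,c)=(\alpha,\beta,\gamma)\circ\pi$'' and in ``$(u,x,y)\circ\pi$'', together with the (easy) verification that these two preimages really are distinct.
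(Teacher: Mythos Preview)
Your proof is correct and follows essentially the same approach as the paper: both argue via the fibre structure of~$s$, use the cardinality comparison $\set{a,b,c}\subseteq\set{\alpha,\beta,\gamma}$ with both sets of size three to obtain the index permutation~$\pi$, and then read off the preimage coordinate by coordinate from the fibres $s^{-1}\fapply{\set{\alpha}}=\set{u,v}$, $s^{-1}\fapply{\set{\beta}}=\set{x}$, $s^{-1}\fapply{\set{\gamma}}=\set{y}$. Your version is slightly more explicit about the converse inclusion and the distinctness of the two preimage tuples, but these are exactly the details the paper compresses into its final sentence.
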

\begin{proof}
If there is $\mathff{x}\in\CarrierSet[3]$ with
\m{s\circ\mathff{x}=(a,b,c)}, then
\m{\set{a,b,c}\subs\im{s}=\set{\alpha,\beta,\gamma}},
which equals \m{\CarrierSet\setminus\set{t}}, so \m{t\notin \set{a,b,c}}.
If this happens, then
\m{\set{a,b,c}} is a subset of
\m{\CarrierSet\setminus\set{t}=\set{\alpha,\beta,\gamma}}, and, as both
sets have three elements, they must coincide. Thus, there is
\m{\pi\in\Sym(3)} such that
\[(a,b,c)=(\alpha,\beta,\gamma)\circ\pi=
(s(u),s(x),s(y))\circ\pi = (s(v),s(x),s(y))\circ\pi.\]
Since \m{s^{-1}\fapply{\set{\alpha}}=\set{u,v}},
\m{s^{-1}\fapply{\set{\beta}}=\set{x}} and
\m{s^{-1}\fapply{\set{\gamma}}=\set{y}}, there are no other triples with
this property.
\end{proof}

\begin{observation}\label{obs:tuples-determining-other-tuples}
Consider \m{s\in\Ops[1]} with \m{\crd{\im{s}} = 3}, \m{\im{s} =
\set{\alpha,\beta,\gamma}}; let $f\in\Ops[3]$ and \m{(a,b,c)\in\sigma}.
Under the condition \m{s\commuteswith f}, the value \m{f(a,b,c)} only
determines values \m{f(\mathff{x})} for some other
\m{\mathff{x}\in\CarrierSet[3]}, if \m{\mathff{x}\in\sigma} and
\m{\set{a,b,c}=\im{s}=\set{\alpha,\beta,\gamma}}.
\end{observation}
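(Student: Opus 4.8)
The plan is to obtain the statement essentially as a corollary of Lemma~\ref{lem:s-im-3-12+12-tuples}. First I would pin down what ``\m{f(a,b,c)} determines \m{f(\mathff{x})}'' is meant to express. By Observation~\ref{obs:commutation-unary}, for \m{f\in\Ops[3]} the relation \m{s\commuteswith f} is exactly the family of identities \m{s\apply{f\apply{\mathff{x}}}=f\apply{s\circ\mathff{x}}}, one for every input tuple \m{\mathff{x}\in\CarrierSet[3]}. The identity attached to \m{\mathff{x}} prescribes the value \m{s\apply{f\apply{\mathff{x}}}}, hence the fibre of \m{s} to which \m{f\apply{\mathff{x}}} belongs, purely in terms of the single value \m{f\apply{s\circ\mathff{x}}} appearing on its right-hand side. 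Therefore the only identities through which the value \m{f(a,b,c)} can constrain (and possibly determine) \m{f(\mathff{x})} for a tuple \m{\mathff{x}\neq(a,b,c)} are those whose right-hand side literally equals \m{f(a,b,c)}, i.e.\ those indexed by an \m{\mathff{x}} with \m{s\circ\mathff{x}=(a,b,c)}; no other identity of the family links \m{f(a,b,c)} and \m{f(\mathff{x})}.

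Next I would invoke Lemma~\ref{lem:s-im-3-12+12-tuples}, with \m{t} the unique element of \m{\CarrierSet\setminus\im{s}=\CarrierSet\setminus\set{\alpha,\beta,\gamma}}, applied to the given triple \m{(a,b,c)\in\sigma}. If \m{t\in\set{a,b,c}}, the lemma tells us there is no \m{\mathff{x}\in\CarrierSet[3]} with \m{s\circ\mathff{x}=(a,b,c)} at all, so in this case \m{f(a,b,c)} determines no further value. If, on the other hand, \m{t\notin\set{a,b,c}}, then \m{\set{a,b,c}\subs\im{s}}, and since both sides are three-element sets they coincide, giving \m{\set{a,b,c}=\im{s}=\set{\alpha,\beta,\gamma}}; moreover, writing \m{(a,b,c)=(\alpha,\beta,\gamma)\circ\pi} for the (unique) permutation \m{\pi\in\Sym(3)}, the lemma identifies \m{\lset{\mathff{x}\in\CarrierSet[3]}{s\circ\mathff{x}=(a,b,c)}=\set{(u,x,y)\circ\pi,(v,x,y)\circ\pi}}, and both of these tuples have pairwise distinct entries because \m{\CarrierSet=\set{u,v,x,y}}, so they belong to \m{\sigma}.

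Combining the two observations gives the claim: if \m{f(a,b,c)} determines \m{f(\mathff{x})} for some \m{\mathff{x}} other than \m{(a,b,c)}, then necessarily \m{\set{a,b,c}=\im{s}} and \m{\mathff{x}\in\sigma}. Should one wish to allow iterated determination, the same reasoning applied step by step shows that every tuple reached from \m{f(a,b,c)} in a chain of such determinations again lies in \m{\sigma}, since \m{(u,x,y)\circ\pi} and \m{(v,x,y)\circ\pi} always do.

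I do not expect a real obstacle here, since the statement is little more than a reformulation of Lemma~\ref{lem:s-im-3-12+12-tuples}. The only point that needs care is to commit clearly to the directional reading of ``determines'': the commutation identity indexed by \m{\mathff{x}} is being used to constrain \m{f(\mathff{x})} from the prescribed value \m{f(s\circ\mathff{x})}, and not conversely, so that ``\m{f(a,b,c)} determines \m{f(\mathff{x})}'' genuinely forces \m{s\circ\mathff{x}=(a,b,c)} and not merely \m{\mathff{x}} to be related to \m{(a,b,c)} in some weaker sense.
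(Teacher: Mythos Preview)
Your proposal is correct and follows essentially the same approach as the paper: both interpret ``\m{f(a,b,c)} determines \m{f(\mathff{x})}'' via the commutation identity \m{s(f(\mathff{x}))=f(s\circ\mathff{x})}, reducing the question to the existence of \m{\mathff{x}} with \m{s\circ\mathff{x}=(a,b,c)}. The only difference is cosmetic: you invoke Lemma~\ref{lem:s-im-3-12+12-tuples} to describe that preimage and read off both conclusions, whereas the paper re-derives the two needed facts (\m{\set{a,b,c}\subs\im s}, hence equality by cardinality; and \m{\mathff{x}\in\sigma} since \m{s\circ\mathff{x}} has distinct entries) directly in two lines without citing the lemma.
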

\begin{proof}
The condition \m{s\commuteswith f} may determine \m{f(\mathff{x})} via
\m{s(f(\mathff{x}))=f(s\circ\mathff{x})}, that is, by
\m{f(\mathff{x})\in s^{-1}\fapply{f(s\circ\mathff{x})}}. For $f(a,b,c)$
to have an influence on~$f(\mathff{x})$, we need to have
\m{s\circ\mathff{x}=(a,b,c)}, i.e., \m{\mathff{x}\in\sigma} and
\m{\set{a,b,c}\subs\im{s}=\set{\alpha,\beta,\gamma}}. As both sets have
three distinct elements, they must coincide.
\end{proof}

\begin{lemma}\label{lem:s-with-im3}
Assume \m{\crd{\CarrierSet}=4} and let \m{s\in\Ops[1]} with
\m{\crd{\im{s}} = 3}, \m{\im{s} = \set{\alpha,\beta,\gamma}},
\m{\CarrierSet = \set{u,v,x,y}}
be such that \m{s(u) =s(v) = \alpha}, \m{s(x)=\beta}, \m{s(y)=\gamma}.
Moreover, let \m{f\in\Ops[3]} be a majority operation.
Now, \m{f\commuteswith s} if and only if
\begin{align*}
f((u,v,z)\circ\pi) &\in s^{-1}\fapply{\set{\alpha}}=\set{u,v}\\
f((w,x,y)\circ\pi) &\in
s^{-1}\fapply{\set{f((\alpha,\beta,\gamma)\circ\pi)}}
\end{align*}
hold for all \m{z\in\set{x,y}}, \m{w\in\set{u,v}} and all
\m{\pi\in\Sym(3)}.
\end{lemma}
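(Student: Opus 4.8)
The plan is to invoke Observation~\ref{obs:comm-maj-on-sigma}, which reduces \m{f\commuteswith s} to the single family of equalities \m{s\apply{f\apply{\mathff{a}}} = f\apply{s\circ\mathff{a}}} as \m{\mathff{a}} ranges over \m{\sigma}; throughout I write \m{\mathff{a}=(a,b,c)} for the generic such triple, so that the carrier elements \m{x,y} remain free. The one structural fact driving everything is that, since \m{\im s=\set{\alpha,\beta,\gamma}} has exactly three elements while \m{\CarrierSet=\set{u,v,x,y}} has four, every \m{\mathff{a}\in\sigma} omits exactly one of \m{u,v,x,y}, and \m{s\circ\mathff{a}} lands in \m{\set{\alpha,\beta,\gamma}^{3}}, hence is either a permutation of \m{(\alpha,\beta,\gamma)} or has a repeated entry. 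These two alternatives (which also underlie Observation~\ref{obs:tuples-determining-other-tuples}) yield precisely the two displayed families of conditions, so I would split the verification accordingly.

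First I would handle the triples \m{\mathff{a}\in\sigma} that omit \m{x} or \m{y}, equivalently those with \m{\set{u,v}\subs\set{a,b,c}}. For such \m{\mathff{a}}, the tuple \m{s\circ\mathff{a}} attains the value \m{s(u)=s(v)=\alpha} on (at least) two of its three coordinates, so, \m{f} being a majority operation, \m{f\apply{s\circ\mathff{a}}=\alpha}; the commutation equality therefore collapses to \m{s\apply{f\apply{\mathff{a}}}=\alpha}, that is, to \m{f\apply{\mathff{a}}\in s^{-1}\fapply{\set{\alpha}}=\set{u,v}}. Writing \m{\mathff{a}=(u,v,z)\circ\pi} with \m{z\in\set{x,y}} the unique third entry and \m{\pi\in\Sym(3)} recording the order of the coordinates, and observing that \m{(z,\pi)} running over \m{\set{x,y}\times\Sym(3)} sweeps out exactly the triples of this case, we obtain precisely the first displayed family.

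Next I would handle the triples \m{\mathff{a}\in\sigma} that omit \m{u} or \m{v}; then \m{\set{a,b,c}=\set{s(w),s(x),s(y)}=\set{\alpha,\beta,\gamma}} for the unique \m{w\in\set{u,v}} occurring in \m{\mathff{a}}, so \m{s\circ\mathff{a}\in\sigma}. Writing \m{\mathff{a}=(w,x,y)\circ\pi} with \m{w\in\set{u,v}} and \m{\pi\in\Sym(3)}, associativity of composition gives \m{s\circ\mathff{a}=\apply{s(w),s(x),s(y)}\circ\pi=(\alpha,\beta,\gamma)\circ\pi}, whence the commutation equality becomes \m{s\apply{f\apply{(w,x,y)\circ\pi}}=f\apply{(\alpha,\beta,\gamma)\circ\pi}}, i.e.\ \m{f\apply{(w,x,y)\circ\pi}\in s^{-1}\fapply{\set{f\apply{(\alpha,\beta,\gamma)\circ\pi}}}}; again \m{(w,\pi)} over \m{\set{u,v}\times\Sym(3)} enumerates exactly these triples, which is the second displayed family. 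Since the two cases exhaust \m{\sigma}, the conjunction of the two families is equivalent to \m{f\commuteswith s}.

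There is no deep obstacle here; the argument is essentially bookkeeping, and the points deserving a little care are: that \m{s\circ(\mathff{a}\circ\pi)=(s\circ\mathff{a})\circ\pi} and that re-indexing by all of \m{\Sym(3)} reproduces each ordering of a fixed three-element tuple (any overcounting being harmless, as we compare universally quantified families of conditions); the borderline sub-case of the second family in which \m{f\apply{(\alpha,\beta,\gamma)\circ\pi}} equals the element \m{t\notin\im s}, so that \m{s^{-1}\fapply{\set{f\apply{(\alpha,\beta,\gamma)\circ\pi}}}=\emptyset} and the stated membership is unsatisfiable, which correctly witnesses that such an \m{f} fails to commute with \m{s}; and keeping the carrier elements \m{u,v,x,y} notationally apart from the tuple variable. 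Lemma~\ref{lem:s-im-3-12+12-tuples} may optionally be cited to shortcut the identification, for a given \m{(a,b,c)\in\sigma}, of which tuples are its \m{s}-preimages, but it is not strictly needed for the above.
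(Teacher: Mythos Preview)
Your proof is correct and follows essentially the same approach as the paper: invoke Observation~\ref{obs:comm-maj-on-sigma} to reduce to triples in~$\sigma$, then partition~$\sigma$ into the triples containing both~$u$ and~$v$ (where the majority law forces $f(s\circ\mathff{a})=\alpha$) and those containing both~$x$ and~$y$ (where $s\circ\mathff{a}=(\alpha,\beta,\gamma)\circ\pi$), obtaining the two displayed families. Your additional remarks on the empty-preimage sub-case and the parametrisation by $\Sym(3)$ are accurate but go slightly beyond what the paper spells out.
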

\begin{proof}
By Observation~\ref{obs:comm-maj-on-sigma}, we have
\m{f\commuteswith s} if and only if \m{f(\mathff{x})\in
s^{-1}\fapply{\set{s\circ\mathff{x}}}} holds for all
\m{\mathff{x}\in\sigma}. Every tuple \m{\mathff{x}\in \sigma}
consists of three distinct entries, wherefore
\m{\im{\mathff{x}}\in\set{\set{u,v,x},\set{u,v,y},\set{u,x,y},\set{v,x,y}}}.
If \m{\mathff{x}=(u,v,z)\circ\pi} with \m{z\in\set{x,y}} and some
\m{\pi\in\Sym(3)}, then the above condition becomes
\begin{align*}
f(\mathff{x})=f((u,v,z)\circ\pi)\in
s^{-1}\fapply{\set{f(s\circ(u,v,z)\circ\pi)}}
&=s^{-1}\fapply{\set{f((\alpha,\alpha,s(z))\circ\pi)}}\\
&=s^{-1}\fapply{\set{\alpha}}=\set{u,v}.
\end{align*}
If \m{\mathff{x}=(w,x,y)\circ\pi} for some \m{\pi\in\Sym(3)} and
\m{w\in\set{u,v}}, the condition turns into
\begin{align*}
f(\mathff{x})=f((w,x,y)\circ\pi)\in s^{-1}\fapply{\set{f(s\circ(w,x,y)\circ\pi)}}
&=s^{-1}\fapply{\set{f((\alpha,\beta,\gamma)\circ\pi)}}.
\end{align*}
Therefore, the characterisation in the lemma is obtained from
Observation~\ref{obs:comm-maj-on-sigma} by partitioning~$\sigma$ into
two times two classes of \m{3!=6} tuples each.
\end{proof}

\begin{corollary}\label{cor:s-with-im3}
Assume \m{\crd{\CarrierSet}=4} and let \m{s\in\Ops[1]} with
\m{\crd{\im{s}} = 3}, \m{\im{s} = \set{\alpha,\beta,\gamma}},
\m{\CarrierSet = \set{u,v,x,y}}
be such that \m{s(u) =s(v) = \alpha}, \m{s(x)=\beta}, \m{s(y)=\gamma}.
The restriction
\m{\zeta\defeq
s\restriction_{\set{\alpha,\beta,\gamma}}\in\Op[1]{\set{\alpha,\beta,\gamma}}}
satisfies
\m{\zeta\notin \Sym\apply{\set{\alpha,\beta,\gamma}}
\iff
\set{u,v}\subs\set{\alpha,\beta,\gamma}}.
\par
Moreover, let \m{f\in\Ops[3]} be a majority operation.
\begin{enumerate}[(a)]
\item\label{item:zeta-not-sym}
      If \m{\zeta\notin\Sym\apply{\set{\alpha,\beta,\gamma}}}, let the
meaning of the symbols
      \m{(x,\beta)} and \m{(y,\gamma)} be chosen such that
      $\set{\alpha,\beta,\gamma}=\set{u,v,x}$, i.e.,
      $(\alpha,\beta,\gamma)=(u,v,x)\circ\xi$
      for some $\xi\in\Sym(3)$.
      \par
      In this case, we have \m{f\commuteswith s} if and only if
\begin{align*}
f((u,v,z)\circ\pi) &\in s^{-1}\fapply{\set{\alpha}}=\set{u,v}\\
f((w,x,y)\circ\pi) &\in
s^{-1}\fapply{\set{f((\alpha,\beta,\gamma)\circ\pi)}}
=s^{-1}\fapply{\set{f((u,v,x)\circ\xi\circ\pi)}}
\end{align*}
      hold for all \m{z\in\set{x,y}}, \m{w\in\set{u,v}} and
      \m{\pi\in\Sym(3)}.
\item\label{item:zeta-in-sym}
      If $\zeta\in\Sym\apply{\set{\alpha,\beta,\gamma}}$, choose the
      symbols~$u$ and~$v$ such that
      \m{v\notin\set{\alpha,\beta,\gamma}\ni u}, i.e.,
      \m{\set{\alpha,\beta,\gamma}=\set{u,x,y}}, so there is
      \m{\xi\in\Sym(3)} with
      \m{(\alpha,\beta,\gamma)=(u,x,y)\circ\xi}.
      \par
      In this case, we have \m{f\commuteswith s} if and only if
\begin{align*}
f((u,v,z)\circ\pi) &\in s^{-1}\fapply{\set{\alpha}}=\set{u,v}\\
f((w,x,y)\circ\pi) &\in
s^{-1}\fapply{\set{f((\alpha,\beta,\gamma)\circ\pi)}}
=s^{-1}\fapply{\set{f((u,x,y)\circ\xi\circ\pi)}}
\end{align*}
      hold for all \m{z\in\set{x,y}}, \m{w\in\set{u,v}} and
      \m{\pi\in\Sym(3)}.
\end{enumerate}
\end{corollary}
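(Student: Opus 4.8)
The plan is to read off both assertions from Lemma~\ref{lem:s-with-im3} after one elementary observation about the kernel of~\m{s}. Since \m{\crd{\CarrierSet}=4} and \m{\crd{\im s}=3}, the kernel of~\m{s} has exactly one non\dash{}trivial class, namely \m{s^{-1}\fapply{\set{\alpha}}=\set{u,v}}, while \m{x} and \m{y} are the unique preimages of \m{\beta} and \m{\gamma}. Hence, for any subset \m{S\subs\CarrierSet}, the restriction \m{\Restr{s}{S}} is injective if and only if \m{\set{u,v}\not\subs S}. Because \m{\im s=\set{\alpha,\beta,\gamma}} forces \m{s\fapply{\set{\alpha,\beta,\gamma}}\subs\set{\alpha,\beta,\gamma}}, the map \m{\zeta} is indeed a self\dash{}map of the three\dash{}element set \m{\set{\alpha,\beta,\gamma}}; applying the injectivity criterion with \m{S=\set{\alpha,\beta,\gamma}} and recalling that an injective self\dash{}map of a finite set is a bijection yields the stated equivalence \m{\zeta\notin\Sym\apply{\set{\alpha,\beta,\gamma}}\iff\set{u,v}\subs\set{\alpha,\beta,\gamma}}.

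For the case \m{\zeta\notin\Sym\apply{\set{\alpha,\beta,\gamma}}}, the equivalence just proved gives \m{\set{u,v}\subs\set{\alpha,\beta,\gamma}}, so the one remaining element of the three\dash{}element set \m{\set{\alpha,\beta,\gamma}} must be \m{x} or \m{y}, as \m{\CarrierSet=\set{u,v,x,y}}. Since the symbols \m{\beta,\gamma} are merely names for the two non\dash{}\m{\alpha} values of~\m{s} (and \m{x=s^{-1}\fapply{\set{\beta}}}, \m{y=s^{-1}\fapply{\set{\gamma}}} the corresponding singleton preimages), I am free to fix the labelling so that this element is~\m{x}, i.e.\ \m{\set{\alpha,\beta,\gamma}=\set{u,v,x}}, which is exactly the condition \m{(\alpha,\beta,\gamma)=(u,v,x)\circ\xi} for some \m{\xi\in\Sym(3)} recorded in the statement. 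Substituting \m{(\alpha,\beta,\gamma)\circ\pi=(u,v,x)\circ\xi\circ\pi} (by associativity of composition with re\dash{}indexing maps) into the second displayed condition of Lemma~\ref{lem:s-with-im3}, and leaving the first one untouched, produces the asserted characterisation.

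The case \m{\zeta\in\Sym\apply{\set{\alpha,\beta,\gamma}}} is handled dually: now \m{\set{u,v}\not\subs\set{\alpha,\beta,\gamma}}, and since \m{\CarrierSet\setminus\set{\alpha,\beta,\gamma}} is the singleton \m{\set{t}}, exactly one of \m{u,v} equals~\m{t}; renaming if necessary I may assume \m{v=t\notin\set{\alpha,\beta,\gamma}\ni u}. As \m{x,y\in\im s=\set{\alpha,\beta,\gamma}} and \m{u,x,y} are pairwise distinct, this forces \m{\set{\alpha,\beta,\gamma}=\set{u,x,y}}, hence \m{(\alpha,\beta,\gamma)=(u,x,y)\circ\xi} for some \m{\xi\in\Sym(3)}, and substituting \m{(\alpha,\beta,\gamma)\circ\pi=(u,x,y)\circ\xi\circ\pi} into Lemma~\ref{lem:s-with-im3} gives the remaining claim. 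There is no genuine obstacle here; the only point requiring a little care is checking that the relabellings are harmless — that interchanging \m{\beta\leftrightarrow\gamma} (equivalently \m{x\leftrightarrow y}) in the first case and \m{u\leftrightarrow v} in the second alters neither the hypotheses on~\m{s} nor the commutation condition — which is immediate from how these symbols were introduced.
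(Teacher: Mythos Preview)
Your proof is correct and follows essentially the same approach as the paper: both establish the equivalence \m{\zeta\notin\Sym\apply{\set{\alpha,\beta,\gamma}}\iff\set{u,v}\subs\set{\alpha,\beta,\gamma}} via the kernel structure of~\m{s}, and then derive the two characterisations directly from Lemma~\ref{lem:s-with-im3} by making the appropriate labelling choices for \m{x,y} (respectively \m{u,v}) and substituting the resulting expression for \m{(\alpha,\beta,\gamma)}. Your write-up simply spells out in more detail what the paper compresses into a single sentence.
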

\begin{proof}
First we argue that \m{\zeta\notin\Sym\apply{\set{\alpha,\beta,\gamma}}} if and
only if \m{\set{u,v}\subs\set{\alpha,\beta,\gamma}}. Indeed, if~$\zeta$
is not a permutation, it cannot be injective, and hence must send two
distinct arguments to the same value. By the structure of~$s$, these
arguments must be \m{u} and \m{v}, so these values must occur in the
domain of~$\zeta$, which is \m{\set{\alpha,\beta,\gamma}}. The converse
is evident.
\par
The two characterisations are now immediate consequences of
Lemma~\ref{lem:s-with-im3} and of the choice which elements
of~\m{\CarrierSet} are denoted by \m{x,\beta,y,\gamma} and \m{u,v},
respectively.
\end{proof}

Note that, according to
Observation~\ref{obs:tuples-determining-other-tuples}, any choice of the
value \m{f((u,v,y)\circ\pi)} in the first part of the condition in
Corollary~\ref{cor:s-with-im3}\eqref{item:zeta-not-sym} does not
influence other values of~$f$, whereas a choice of
\m{f((u,v,x)\circ\pi)}, which can be
arbitrary in \m{s^{-1}\fapply{\set{\alpha}}}, limits the choices of
\nbdd{f}values for exactly two other triples in~\m{\sigma} (as predicted
by Lemma~\ref{lem:s-im-3-12+12-tuples}).
\par
By the same logic, the twelve values \m{f((u,v,z)\circ\pi)} in the first
part of the condition in
Corollary~\ref{cor:s-with-im3}\eqref{item:zeta-in-sym} can be chosen
freely in \m{s^{-1}\fapply{\set{\alpha}}}, but the values of~\m{f}
on permutations of~\m{(v,x,y)} depend on the choice of~\m{f} on
some (other) permutation of~\m{(u,x,y)}, which in turn has cyclic
interdependencies with \nbdd{f}values on other permutations
of~\m{(u,x,y)}. The details of these interrelations depend on the action
of~$s$ (more precisely, of the cyclic group
$S=\gapply{\set{\zeta}}_{\Sym\apply{\set{\alpha,\beta,\gamma}}}$) on the
set~\m{\sigma'} of six permutations of \m{(u,x,y)} (or
\m{(\alpha,\beta,\gamma)}), namely on the index
map~$\xi\in\Sym(3)$ that is uniquely determined by
$s\circ(u,x,y)=(\alpha,\beta,\gamma)=(u,x,y)\circ\xi$.
The action of~$S$ on~\m{\sigma'=\lset{(u,x,y)\circ\pi}{\pi\in\Sym(3)}}
partitions~$\sigma'$ into $6/n$ orbits of size~$n\in\set{1,2,3}$
where~$n$ is the order of~$\zeta$. For each of these orbits, the choice
of~$f$ on only one point of the orbit uniquely determines the values
of~$f$ on all triples in the orbit (see also
Subsection~\ref{subsect:permutation} for more explanation), and these
values restrict the values of~$f$ on the corresponding triples
$(v,x,y)\circ\pi$ as demonstrated in Corollary~\ref{cor:s-with-im3}.
The values of~$f$ on triples belonging to different orbits do not
influence each other with respect to \m{s\commuteswith f}.
For more clarity we give an explicit example.
\par

\begin{example}\label{ex:3-element-image}
Let \m{s\in\Op[1]{4}} be given by \m{s\circ(0,1,2,3)=(3,0,0,1)}. Then
we have \m{\im{s}=\set{0,1,3}} and \m{\alpha=0} since
\m{B=s^{-1}\fapply{\set{0}}=\set{1,2}}.
Hence, \m{v=\nolinebreak2\in B\setminus\im{s}} and
$u=1\in B\setminus\set{v}$. As
$\{x,y\}=\{0,1,2,3\}\setminus B=\nolinebreak\{0,3\}$,
we choose \m{x=0} and \m{y=3}, and thus \m{\beta=s(x)=3} and
\m{\gamma=s(y)=1}.
Now \m{\zeta=s\restriction_{\set{0,1,3}} = (0\,3\,1)}
is a cyclic shift of order three and thus
\[
s\circ(u,x,y)=s\circ(1,0,3)=(0,3,1)=(1,0,3)\circ\xi
\]
is also given as a cyclic index shift \m{\xi=(0\,1\,2)\in\Sym(3)}.
The six permutations of $(u,x,y)=(1,0,3)$ are partitioned into two
orbits of size~$3$ under the action of
\m{\gapply{\set{\zeta}}_{\Sym(\set{0,1,3})}}, and the action of~$s$
(shown as arrows) on permutations of \m{(v,x,y)=(2,0,3)} is connected
to these orbits as follows:
\begin{center}
\begin{tikzpicture}[x=1em,y=1em,
pkt/.style={shape=circle,draw,minimum size=3pt,
            inner sep=0pt,outer sep=0.5\pgflinewidth}]
\begin{scope}[scale=2]
\node[pkt,label={below:{$\scriptstyle(1,0,3)$}}] (1) at (0,0){};
\node[pkt,label={below:{$\scriptstyle(0,3,1)$}}] (2) at (2,0){};
\node[pkt,label={above:{$\scriptstyle(3,1,0)$}}] (3) at (1,1.732){};
\node[pkt,label={above:{$\scriptstyle(3,2,0)$}}] (4) at (-1,0){};
\node[pkt,label={above:{$\scriptstyle(2,0,3)$}}] (5) at (3,0){};
\node[pkt,label={below:{$\scriptstyle(0,3,2)$}}] (6) at (2,1.732){};
\draw[->,>=stealth] (4)-> (1);
\draw[->,>=stealth] (1)-> (2);
\draw[->,>=stealth] (2)-> (3);
\draw[->,>=stealth] (3)-> (1);
\draw[->,>=stealth] (5)-- (2);
\draw[->,>=stealth] (6)-- (3);
\end{scope}
\begin{scope}[scale=2,shift={(6,0)}]
\node[pkt,label={below:{$\scriptstyle(1,3,0)$}}] (1) at (0,0){};
\node[pkt,label={below:{$\scriptstyle(0,1,3)$}}] (2) at (2,0){};
\node[pkt,label={above:{$\scriptstyle(3,0,1)$}}] (3) at (1,1.732){};
\node[pkt,label={above:{$\scriptstyle(3,0,2)$}}] (4) at (-1,0){};
\node[pkt,label={above:{$\scriptstyle(2,3,0)$}}] (5) at (3,0){};
\node[pkt,label={below:{$\scriptstyle(0,2,3)$}}] (6) at (2,1.732){};
\draw[->,>=stealth] (4)-> (1);
\draw[->,>=stealth] (1)-> (2);
\draw[->,>=stealth] (2)-> (3);
\draw[->,>=stealth] (3)-> (1);
\draw[->,>=stealth] (5)-- (2);
\draw[->,>=stealth] (6)-- (3);
\end{scope}
\end{tikzpicture}
\end{center}
By the preimage condition
$f((w,x,y)\circ\pi)\in s^{-1}\fapply{\set{f((u,x,y)\circ\xi\circ\pi)}}$
for $w\in\set{u,v}$ and \m{\pi\in\Sym(3)}
in Corollary~\ref{cor:s-with-im3}\eqref{item:zeta-in-sym}, we have to choose
\nbdd{f}values in \nbdd{s}preimages, that is in $\set{1,2}$, $\set{0}$
or $\set{3}$, in the opposite direction of the arrows given by the
action of~$s$ in order to ensure \m{f\commuteswith s}.
\par
For example, on any permutation of $(u,x,y)=(1,0,3)$ we are not
allowed to choose the value~$2$, as otherwise the \nbdd{s}preimage
would be empty and there would be no possible \nbdd{f}value
for a (here and generally different) permutation of
\m{(v,x,y)=(2,0,3)}. In fact, we pick an arbitrary representative from
each orbit of $(1,0,3)$, for instance, $(1,0,3)$ and $(1,3,0)$,
independently select an arbitrary \nbdd{f}value in $\set{0,1,3}$ for
it, and this determines the \nbdd{f}values for all permutations
of~$(1,0,3)$ uniquely (since $2$ is a forbidden value).
After that the values for the permutations of $(v,x,y)=(2,0,3)$ can be
chosen freely in the respective \nbdd{s}preimage of these six unique
values. Unconstrained by this, the \nbdd{f}values for triples
from~$\sigma$ containing~$1$ and~$2$ can be fixed arbitrarily
in~\m{\set{u,v}=\set{1,2}} under the condition \m{s\commuteswith f}.
\par
To be concrete, if we select, e.g., \m{f(1,0,3)=1} and \m{f(1,3,0)=0},
then
\begin{align*}
f(3,1,0)&=3 & f(0,3,1)&=0\\
f(3,2,0)&=3 & f(0,3,2)&=0& f(2,0,3)&\in\set{1,2}\\
f(3,0,1)&=1 & f(0,1,3)&=3\\
f(3,0,2)&\in\set{1,2}& f(0,2,3)&=3 & f(2,3,0)&=0.
\end{align*}
Moreover, we have to combine this with any of the $2^{12}$ independent
choices for \m{f((1,2,z)\circ\pi)\in\set{1,2}} where \m{z\in\set{0,3}}
and \m{\pi\in\Sym(3)}.
\end{example}

In the light of the preceding interpretation, the description in
Corollary~\ref{cor:s-with-im3} is sufficient to computationally
enumerate all majority operations in~\m{\cent{\set{s}}} for each of the~$144$
unary operations~$s\in\Ops[1]$ with a three\dash{}element image on
\m{\CarrierSet = \set{0,1,2,3}}.
These~$144$ cases fall into finitely many classes depending on the
unique element in~$\CarrierSet\setminus\im{s}$, the choice of the unique
two\dash{}element preimage $\set{u,v}$, the fact whether
$s\restriction_{\im{s}}$ is a permutation or not, and, if so, which
order it has, and some more subtle conditions regarding the roles
of~$u,v$ (in the permutation case) and of~$x,\beta,y,\gamma$ (in the
non\dash{}permutation case). In the context of our task, the exact
number of classes is not relevant enough, so that we forgo an
explicit listing of the conditions as in
Subsections~\ref{subsect:two-element-image} and~\ref{subsect:permutation}.
It is more important for us to note that a given
three\dash{}valued~$s\in\Ops[1]$ sufficiently restricts the search
space of majority operations~$f$, so that all possible candidates
$f\in\cent{\set{s}}$ can be enumerated (and then checked) in a reasonable
amount of time. A rough upper bound can be derived by observing that
preimages of any given~$s$ have size at most two, and that there are
only~$24$ relevant triples in~$\sigma$. Hence, according to the
conditions presented in Corollary~\ref{cor:s-with-im3}, at
most~$2^{24}=16\,777\,216$ majority operations (and with more
programming skills even much less, due to some preimages being
singletons) need to be considered.

\subsection{Permutations}\label{subsect:permutation}%
Before we start with a more detailed analysis,
let us note the following trivial observation,
which follows from the fact that every permutation on a finite set has
a finite order and centralisers are closed under composition and thus
under taking finite powers.
\begin{observation}\label{obs:inverse-perm}
For a permutation $s\in\Sym\apply{\CarrierSet}$ on a finite
set~$\CarrierSet$ we have \m{s\commuteswith f} if and only
\m{s^{-1}\commuteswith f} for any finitary operation \m{f\in\Ops}. In
other words \m{\cent{\set{s}} = \cent{\set{s^{-1}}}}, whence \m{s} and \m{s^{-1}} are
not distinguishable \wrt\ commutation, and attribute
clarification will remove one of \m{s} and \m{s^{-1}}.
\end{observation}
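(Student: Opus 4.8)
The plan is to reduce the statement to two well\dash{}known facts: first, that the centraliser $\cent{\set{f}}$ of any finitary operation $f\in\Ops$ is a clone---in fact $\cent{\set{f}}=\Pol{\graph{f}}$---and hence contains all projections and is closed under composition; and second, that every element of the symmetric group $\Sym(\CarrierSet)$ on a \emph{finite} carrier set~$\CarrierSet$ has finite order. Combining the two immediately makes $s^{-1}$ available inside any centraliser that already contains the permutation~$s$.

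To carry this out I would first fix $s\in\Sym(\CarrierSet)$ and let $n\geq 1$ be its order, so that $s^{n}=\id_{\CarrierSet}$ and therefore $s^{-1}=s^{\,n-1}$, where $s^{0}$ is understood as the unary projection $\id_{\CarrierSet}$ (the empty composition). Next, assuming $s\commuteswith f$ for a given $f\in\Ops$, i.e.\ $s\in\cent{\set{f}}$, I would show by induction on $k$ that $s^{k}\in\cent{\set{f}}$ for every $k\in\N$: the base case $k=0$ holds because $\id_{\CarrierSet}$ is a projection and $\cent{\set{f}}$ is a clone, and the induction step uses closure of $\cent{\set{f}}$ under composition of unary maps. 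Specialising to $k=n-1$ gives $s^{-1}\in\cent{\set{f}}$, that is $s^{-1}\commuteswith f$. For the converse I would not repeat anything: $s^{-1}$ is again a permutation of the finite set $\CarrierSet$ with $(s^{-1})^{-1}=s$, so applying the implication just proved to $s^{-1}$ yields $s^{-1}\commuteswith f\implies s\commuteswith f$. Thus $s\commuteswith f\iff s^{-1}\commuteswith f$ for all $f\in\Ops$, i.e.\ $\cent{\set{s}}=\cent{\set{s^{-1}}}$; intersecting both sides with $\Maj$ finally gives $\set{s}'=\cent{\set{s}}\cap\Maj=\cent{\set{s^{-1}}}\cap\Maj=\set{s^{-1}}'$, so $s$ and $s^{-1}$ are incident with the same objects and attribute clarification may drop one of them.

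I do not expect a genuine obstacle; the content is essentially the remark preceding the statement, and the only points that deserve a moment's care are bookkeeping ones: reading $s^{0}$ as the empty composition so that $s^{-1}=s^{\,n-1}$ is literally built from copies of~$s$, and noticing that the biconditional needs no separate second argument because the hypothesis class ``permutation of a finite set'' is itself closed under inversion. If one preferred a fully self\dash{}contained proof that does not invoke the clone property, one could instead verify straight from Observation~\ref{obs:commutation-unary} that $\id_{\CarrierSet}\commuteswith f$ and that $s\commuteswith f$ and $t\commuteswith f$ imply $(s\circ t)\commuteswith f$ for unary $s,t$---a one\dash{}line check---and then proceed exactly as above.
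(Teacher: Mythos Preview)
Your proof is correct and follows essentially the same approach as the paper, which simply notes that the observation ``follows from the fact that every permutation on a finite set has a finite order and centralisers are closed under composition and thus under taking finite powers.'' Your write-up merely spells this out in more detail (the induction on~$k$, the symmetry via $(s^{-1})^{-1}=s$), but the underlying idea is identical.
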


\begin{lemma}\label{lem:char-comm-f-permutation}
Let \m{s\in\Sym\apply{\CarrierSet}} be a permutation of order
\m{m\in\N} on some set~$\CarrierSet$, and let~$S$ denote the cyclic
permutation group \m{S=\lset{s^j}{0\leq j < m}} generated by~$s$.
For \m{n\in\N}, the group~\m{S} acts on \m{\CarrierSet[n]} by
\m{\apply{\tilde{s},\mathff{x}}\mapsto \tilde{s}\circ\mathff{x}} and
thereby partitions all \nbdd{n}tuples into orbits; let
\m{\lset{\mathff{x}_t}{t\in T}\subs \CarrierSet[n]} be a transversal of
these orbits. The permutation group~$S$ also acts
on~$\sigma\subs\CarrierSet[3]$;
let \m{\lset{\mathff{x}_t}{t\in T'}\subs\sigma} be a transversal of the
corresponding orbits. Moreover, consider any \m{f\in\Ops[n]}.
\begin{enumerate}[(a)]
\item\label{item:perm-f}
      We have \m{s\commuteswith f} if and only if
      \m{f\apply{s^j\circ\mathff{x}_t} =
         s^j\apply{f\apply{\mathff{x}_t}}}
      holds for all \m{0\leq j<m} and all \m{t\in T}.
\item\label{item:perm-maj}
      If \m{n=3} and \m{f\in\Maj}, we have
      \m{s\commuteswith f} if and only if
      \m{f\apply{s^j\circ\mathff{x}_t} =
         s^j\apply{f\apply{\mathff{x}_t}}}
      holds for all \m{0\leq j<m} and all \m{t\in T'}.
\end{enumerate}
\end{lemma}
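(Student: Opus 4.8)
The plan is to derive both equivalences from the two simplified commutation criteria already available — Observation~\ref{obs:commutation-unary} for part~\eqref{item:perm-f} and Observation~\ref{obs:comm-maj-on-sigma} for part~\eqref{item:perm-maj} — combined with the orbit decomposition induced by the action of the cyclic group~$S$. Since both items rest on one and the same mechanism, I would prove~\eqref{item:perm-f} in detail and then transfer the argument almost verbatim to~\eqref{item:perm-maj}.

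For the forward implication of~\eqref{item:perm-f} I would start from $s\commuteswith f$, which by Observation~\ref{obs:commutation-unary} says $s\apply{f\apply{\mathff{x}}}=f\apply{s\circ\mathff{x}}$ for every $\mathff{x}\in\CarrierSet[n]$, and then show by a one\dash{}line induction on~$j$ that $f\apply{s^{j}\circ\mathff{x}}=s^{j}\apply{f\apply{\mathff{x}}}$ holds for all $j\in\N$ and all $\mathff{x}$ (equivalently: centralisers are closed under composition, so $s^{j}\commuteswith f$ for every~$j$); specialising $\mathff{x}$ to the transversal elements $\mathff{x}_{t}$ gives exactly the asserted condition. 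For the converse I would assume $f\apply{s^{j}\circ\mathff{x}_{t}}=s^{j}\apply{f\apply{\mathff{x}_{t}}}$ for all $0\leq j<m$ and $t\in T$. Because $s$ has order~$m$, we have $s^{m}=\id_{\CarrierSet}$, so this identity in fact persists for every $j\in\N$: reduce the exponent modulo~$m$ and use $s^{m}\circ\mathff{x}_{t}=\mathff{x}_{t}$ together with $s^{m}\apply{f\apply{\mathff{x}_{t}}}=f\apply{\mathff{x}_{t}}$. Given an arbitrary $\mathff{x}\in\CarrierSet[n]$, choose $t\in T$ and $j$ with $\mathff{x}=s^{j}\circ\mathff{x}_{t}$; then, using the (extended) hypothesis at indices $j$ and $j+1$,
\[
s\apply{f\apply{\mathff{x}}}
= s^{j+1}\apply{f\apply{\mathff{x}_{t}}}
= f\apply{s^{j+1}\circ\mathff{x}_{t}}
= f\apply{s\circ\mathff{x}},
\]
whence $s\commuteswith f$ by Observation~\ref{obs:commutation-unary}.

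For part~\eqref{item:perm-maj} I would first record that $S$ really does act on~$\sigma$: as $s$ is a bijection, $s\circ\mathff{x}$ has three pairwise distinct entries whenever $\mathff{x}$ does, so $\sigma$ is $S$\dash{}invariant and decomposes into $S$\dash{}orbits with transversal $\lset{\mathff{x}_{t}}{t\in T'}$. Replacing Observation~\ref{obs:commutation-unary} by Observation~\ref{obs:comm-maj-on-sigma} and restricting the entire computation of the preceding paragraph to tuples $\mathff{x}\in\sigma$ yields the claim. There is no genuine difficulty here; the only point that deserves a word is the wrap\dash{}around at $j=m-1$, which is covered by $s^{m}=\id_{\CarrierSet}$ exactly as above, so the lemma is essentially a bookkeeping reformulation of the two earlier observations.
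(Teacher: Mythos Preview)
Your proposal is correct and follows essentially the same approach as the paper: the forward direction uses closure of centralisers under powers, the converse is the same one\dash{}line computation $f(s\circ\mathff{x})=f(s^{j+1}\circ\mathff{x}_t)=s^{j+1}(f(\mathff{x}_t))=s(f(\mathff{x}))$ with the wrap\dash{}around at $j+1=m$ handled via $s^m=\id_{\CarrierSet}$. The only minor presentational difference is that for~\eqref{item:perm-maj} the paper reduces to~\eqref{item:perm-f} by extending the transversal $\lset{\mathff{x}_t}{t\in T'}$ of~$\sigma$ to a transversal indexed by some $T\supseteq T'$ of all of~$\CarrierSet[3]$ and observing that the conditions for $t\in T\setminus T'$ are automatic by Observation~\ref{obs:comm-maj-on-sigma}, whereas you re\dash{}run the argument of~\eqref{item:perm-f} restricted to~$\sigma$; this is not a substantive difference.
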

\begin{proof}
Statement~\eqref{item:perm-maj} follows from~\eqref{item:perm-f} by
noting that for \m{n=3}, the index set~$T'$ can be extended to some
$T\supseteq T'$ such that
\m{\lset{\mathff{x}_t}{t\in T}\sups\lset{\mathff{x}_t}{t\in T'}} forms
a transversal of~\m{\CarrierSet[n]}. Now for \m{f\in\Maj} the part of
the condition in~\eqref{item:perm-f} corresponding to
\m{t\in T\setminus T'} is automatically true as seen in
Observation~\ref{obs:comm-maj-on-sigma}.
\par
Necessity in~\eqref{item:perm-f} is clear, as \m{s\in \cent{\set{f}}}
entails \m{s^\ell\in \cent{\set{f}}} for all \m{\ell\in \N}, and so we have
\m{f\apply{s^\ell\circ \mathff{x}} = s^\ell\apply{f\apply{\mathff{x}}}}
for all \m{\mathff{x}\in\CarrierSet[n]}.
Conversely, if \m{\mathff{x}= s^j\circ\mathff{x}_t} with \m{0\leq j<m}
is a tuple in the orbit represented by~$\mathff{x}_t$, then
\[f\apply{s\circ\mathff{x}}= f\apply{s^{j+1}\circ\mathff{x}_t}
=s^{j+1}\apply{f\apply{\mathff{x}_t}} =
s\apply{s^j\apply{f\apply{\mathff{x}_t}}} =
s\apply{f\apply{s^j\circ\mathff{x}_t}}=s\apply{f\apply{\mathff{x}}},\]
where the second equality is trivial for \m{j+1=m} and follows by
assumption if \m{j+1<m}.
\end{proof}

The arguments of Lemma~\ref{lem:char-comm-f-permutation} together with
simple considerations which forms the tuple
\m{\apply{s^0\apply{f\apply{\mathff{x}_t}},\dotsc,
          s^{m-1}\apply{f\apply{\mathff{x}_t}}}}
may take, depending on the structure of the permutation
\m{s\in\Sym\apply{\CarrierSet}}, are the basis for the following
characterisations. Their proofs will be omitted as the details are
quite straightforward.
\par

Subsequently, we separate the permutations of \m{\set{0,1,2,3}} by their
number of fixed points.
Following the path outlined
in~\cite[3.1.1]{MachidaRosenbergGoldsternCentralisingMonoids}, we shall first describe a
slightly more general case for specific \m{f\in\Ops[3]}, which we shall
subsequently specialise to that of a majority operation.
\par

\subsubsection{General case}
Here we first consider ternary functions \m{f\in\Ops[3]} satisfying the
identities
\[f\apply{x,y,x} \approx f\apply{x,x,z}\approx x.\]
These are common generalisations of majority operations and ternary
semiprojections on the left\dash{}most coordinate.
\par

To begin we deal with permutations \m{s\in\Sym\apply{4}} without any
fixed point. Such a function necessarily is either of the form
\m{s=\apply{abcd}} or \m{s=\apply{ab}\apply{cd}},
wherein \m{\set{a,b,c,d}=\set{0,1,2,3}}.
\begin{lemma}\label{lem:gen-perm-0}
Let \m{s=\apply{a b c d}} with \m{\set{a,b,c,d}=\set{0,1,2,3}} and
suppose \m{f\in\Op[3]{4}} satisfies \m{f\apply{x,y,x}\approx
f\apply{x,x,y}\approx x}. Then \m{f\commuteswith s} if and only if
each of the following quadruples belongs to the set
\m{\set{\apply{a,b,c,d},\apply{b,c,d,a},\apply{c,d,a,b},\apply{d,a,b,c}}}:
\begin{align*}
\apply{f\apply{a,b,b},f\apply{b,c,c},f\apply{c,d,d},f\apply{d,a,a}}, \\
\apply{f\apply{a,c,c},f\apply{b,d,d},f\apply{c,a,a},f\apply{d,b,b}}, \\
\apply{f\apply{a,d,d},f\apply{b,a,a},f\apply{c,b,b},f\apply{d,c,c}}, \\
\apply{f\apply{a,b,c},f\apply{b,c,d},f\apply{c,d,a},f\apply{d,a,b}}, \\
\apply{f\apply{a,b,d},f\apply{b,c,a},f\apply{c,d,b},f\apply{d,a,c}}, \\
\apply{f\apply{a,c,b},f\apply{b,d,c},f\apply{c,a,d},f\apply{d,b,a}}, \\
\apply{f\apply{a,d,b},f\apply{b,a,c},f\apply{c,b,d},f\apply{d,c,a}}, \\
\apply{f\apply{a,c,d},f\apply{b,d,a},f\apply{c,a,b},f\apply{d,b,c}}, \\
\apply{f\apply{a,d,c},f\apply{b,a,d},f\apply{c,b,a},f\apply{d,c,b}}.
\end{align*}
\end{lemma}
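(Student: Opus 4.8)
The plan is to specialise Lemma~\ref{lem:char-comm-f-permutation}\eqref{item:perm-f} to the permutation $s=\apply{abcd}$, of order $m=4$, and to make the orbit description completely explicit. First I would observe that $s$ and $s^2=\apply{ac}\apply{bd}$ are both fixed\dash{}point\dash{}free, so no non\dash{}trivial power of~$s$ stabilises any tuple; hence the cyclic group $S=\set{s^0,s^1,s^2,s^3}$ acts freely on $\CarrierSet[3]$ (recall $\crd{\CarrierSet}=4$), partitioning the $64$ tuples into $16$ orbits of size~$4$. Since~$s$ is a $4$\dash{}cycle, the first entries of the members of an orbit $\set{\mathff{x},s\circ\mathff{x},s^2\circ\mathff{x},s^3\circ\mathff{x}}$ run through all of~$\CarrierSet$, so each orbit meets the $16$ tuples beginning with~$a$ in exactly one point; these sixteen tuples thus form a transversal $\lset{\mathff{x}_t}{t\in T}$ of the orbit set.

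Next I would sort these representatives according to whether the hypotheses $f\apply{x,y,x}\approx f\apply{x,x,y}\approx x$ already determine~$f$ on them. For the $4+3=7$ representatives of shape $\apply{a,a,q}$ or $\apply{a,q,a}$ the identities force $f\apply{\mathff{x}_t}=a$, and $s^j\circ\mathff{x}_t$ has the same shape for every~$j$, whence $f\apply{s^j\circ\mathff{x}_t}=s^j\apply{a}=s^j\apply{f\apply{\mathff{x}_t}}$ holds unconditionally; the corresponding clauses of Lemma~\ref{lem:char-comm-f-permutation}\eqref{item:perm-f} are vacuous and may be dropped. What survives are the $9$ representatives on which~$f$ is unconstrained by the identities: the three tuples $\apply{a,b,b},\apply{a,c,c},\apply{a,d,d}$ with $x_1=x_2\neq x_0$, together with the six tuples of~$\sigma$ beginning with~$a$, namely $\apply{a,b,c},\apply{a,b,d},\apply{a,c,b},\apply{a,d,b},\apply{a,c,d},\apply{a,d,c}$; equivalently, the $36$ tuples satisfying $x_0\neq x_1$ and $x_0\neq x_2$ break up into $3+6$ orbits of size~$4$.

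For each surviving representative $\mathff{x}_t$, the relevant clause of Lemma~\ref{lem:char-comm-f-permutation}\eqref{item:perm-f} demands $f\apply{s^j\circ\mathff{x}_t}=s^j\apply{f\apply{\mathff{x}_t}}$ for $j=0,1,2,3$; writing $v\defeq f\apply{\mathff{x}_t}$, this says precisely that $\apply{f\apply{\mathff{x}_t},f\apply{s\circ\mathff{x}_t},f\apply{s^2\circ\mathff{x}_t},f\apply{s^3\circ\mathff{x}_t}}$ equals $\apply{v,s\apply{v},s^2\apply{v},s^3\apply{v}}$ for some $v\in\CarrierSet$. Because $s=\apply{abcd}$, a one\dash{}line computation yields
\[\lset{\apply{v,s\apply{v},s^2\apply{v},s^3\apply{v}}}{v\in\CarrierSet}=\set{\apply{a,b,c,d},\apply{b,c,d,a},\apply{c,d,a,b},\apply{d,a,b,c}}.\]
It then only remains to expand $s^j\circ\mathff{x}_t$ for the nine representatives---for instance $\apply{a,b,b}\mapsto\apply{b,c,c}\mapsto\apply{c,d,d}\mapsto\apply{d,a,a}$ under successive applications of~$s$---and to read off that the nine resulting quadruples are exactly the nine displayed in the statement. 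Since Lemma~\ref{lem:char-comm-f-permutation}\eqref{item:perm-f} is an equivalence, this proves both directions at once.

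The only genuine obstacle is the bookkeeping: one has to verify that the nine chosen tuples really represent the nine orbits on which~$f$ is not forced by the identities, and that unwinding the $S$\dash{}orbits reproduces the listed quadruples line by line in the stated order. This is entirely mechanical, which is presumably why the authors announce that the proofs of these characterisations are omitted.
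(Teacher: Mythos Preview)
Your proposal is correct and follows precisely the approach the paper itself indicates: it specialises Lemma~\ref{lem:char-comm-f-permutation}\eqref{item:perm-f} to the $4$-cycle, discards the orbits on which the assumed identities make the commutation condition automatic, and rewrites the remaining nine orbit conditions as membership of the displayed quadruples in $\lset{\apply{v,s(v),s^2(v),s^3(v)}}{v\in\CarrierSet}$. The paper explicitly states that the proofs of these characterisations are omitted as straightforward consequences of Lemma~\ref{lem:char-comm-f-permutation}, so you have simply supplied the routine bookkeeping the authors left to the reader.
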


\begin{lemma}\label{lem:gen-perm-02}
Let \m{s=\apply{a b}\apply{c d}} with \m{\set{a,b,c,d}=\set{0,1,2,3}} and
suppose \m{f\in\Op[3]{4}} satisfies \m{f\apply{x,y,x}\approx
f\apply{x,x,y}\approx x}. Then \m{f\commuteswith s} if and only if
each of the following sets either equals \m{\set{a,b}} or
\m{\set{c,d}}:
\begin{align*}
\set{f\apply{a,b,b},f\apply{b,a,a}}, &&
  \set{f\apply{a,b,c},f\apply{b,a,d}}, &&
  \set{f\apply{a,c,d},f\apply{b,d,c}}, \\
\set{f\apply{a,c,c},f\apply{b,d,d}}, &&
  \set{f\apply{a,b,d},f\apply{b,a,c}}, &&
  \set{f\apply{a,d,c},f\apply{b,c,d}}, \\
\set{f\apply{a,d,d},f\apply{b,c,c}}, &&
  \set{f\apply{a,c,b},f\apply{b,d,a}}, &&
  \set{f\apply{c,a,d},f\apply{d,b,c}}, \\
\set{f\apply{c,a,a},f\apply{d,b,b}}, &&
  \set{f\apply{a,d,b},f\apply{b,c,a}}, &&
  \set{f\apply{d,a,c},f\apply{c,b,d}}, \\
\set{f\apply{c,b,b},f\apply{d,a,a}}, &&
  \set{f\apply{c,a,b},f\apply{d,b,a}}, &&
  \set{f\apply{c,d,a},f\apply{d,c,b}}, \\
\set{f\apply{c,d,d},f\apply{d,c,c}}, &&
  \set{f\apply{d,a,b},f\apply{c,b,a}}, &&
  \set{f\apply{d,c,a},f\apply{c,d,b}}.
\end{align*}
\end{lemma}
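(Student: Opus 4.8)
The plan is to specialise Lemma~\ref{lem:char-comm-f-permutation}\eqref{item:perm-f} to \m{n=3} and \m{s=\apply{ab}\apply{cd}}, which is a permutation of order \m{m=2}, so that the cyclic group \m{S=\set{\id_{\CarrierSet},s}} has just two elements. First I would argue, exactly as in the step from part~\eqref{item:perm-f} to part~\eqref{item:perm-maj} of that lemma (which there uses Observation~\ref{obs:comm-maj-on-sigma}), that the orbit conditions of~\eqref{item:perm-f} only need to be verified on representatives of the \nbdd{S}orbits contained in
\[R\defeq\lset{\apply{x,y,z}\in\CarrierSet[3]}{x\neq y\ \text{and}\ x\neq z},\]
the set of tuples not already fixed by the hypothesis \m{f\apply{x,y,x}\approx f\apply{x,x,y}\approx x}: if \m{\mathff{x}\notin R}, then both \m{\mathff{x}} and \m{s\circ\mathff{x}} have a repeated leading entry, so each side of \m{f\apply{s\circ\mathff{x}}=s\apply{f\apply{\mathff{x}}}} reduces to~\m{s} applied to that entry and the equation is automatically true; moreover \m{R} is \nbdd{S}invariant because~\m{s} is a bijection. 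For \m{\crd{\CarrierSet}=4} one has \m{\crd{R}=4\cdot3\cdot3=36}.

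Next I would analyse the \nbdd{S}action on~\m{R}. Since \m{s=\apply{ab}\apply{cd}} has no fixed point, no tuple in \m{\CarrierSet[3]} is fixed by~\m{s}, so each \nbdd{S}orbit inside~\m{R} is a two\dash{}element set \m{\set{\mathff{x},s\circ\mathff{x}}}, and there are \m{36/2=18} of them. For such an orbit the instance \m{j=0} of the condition in Lemma~\ref{lem:char-comm-f-permutation}\eqref{item:perm-f} is the tautology \m{f\apply{\mathff{x}}=f\apply{\mathff{x}}}, so the only substantial constraint is the instance \m{j=1}, namely \m{f\apply{s\circ\mathff{x}}=s\apply{f\apply{\mathff{x}}}}. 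At this point the \emph{elementary but decisive} feature of \m{s=\apply{ab}\apply{cd}} enters: for every \m{p\in\CarrierSet} one has \m{\set{p,s\apply{p}}\in\set{\set{a,b},\set{c,d}}}, and conversely a two\dash{}element set \m{\set{p,q}} equals \m{\set{a,b}} or \m{\set{c,d}} precisely when \m{q=s\apply{p}}. Consequently \m{f\apply{s\circ\mathff{x}}=s\apply{f\apply{\mathff{x}}}} holds if and only if \m{\set{f\apply{\mathff{x}},f\apply{s\circ\mathff{x}}}} equals \m{\set{a,b}} or \m{\set{c,d}}; in particular this already forces the two \nbdd{f}values to be distinct, as~\m{s} is fixed\dash{}point\dash{}free.

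Finally I would exhibit an explicit transversal of the \m{18} orbits and match it against the statement: for each of the \m{18} displayed sets \m{\set{f\apply{\mathff{p}},f\apply{\mathff{q}}}} one verifies that \m{\mathff{q}=s\circ\mathff{p}} (for instance \m{s\circ\apply{a,b,b}=\apply{b,a,a}}, \m{s\circ\apply{a,b,c}=\apply{b,a,d}}, \m{s\circ\apply{c,d,a}=\apply{d,c,b}}), and that the \m{36} tuples occurring altogether exhaust~\m{R}: the six sets of the first column cover the twelve tuples \m{\apply{x,y,y}} with \m{x\neq y}, and the twelve sets of the remaining two columns cover the \m{24} tuples of~\m{\sigma}. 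Inserting this into Lemma~\ref{lem:char-comm-f-permutation}\eqref{item:perm-f} yields the claimed equivalence. I expect this last matching of orbit representatives to be the only laborious part; it is entirely routine, but has to be carried out with some care so that no orbit is omitted or counted twice.
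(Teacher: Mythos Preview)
Your proposal is correct and follows precisely the approach the paper indicates: the paper omits the explicit proof of this lemma, stating only that it follows from Lemma~\ref{lem:char-comm-f-permutation} together with elementary considerations about the possible forms of the tuple \m{\apply{f\apply{\mathff{x}_t},s\apply{f\apply{\mathff{x}_t}}}}, which is exactly what you carry out. Your reduction to the set~\m{R}, the orbit count via fixed\dash{}point\dash{}freeness of~\m{s}, and the translation of \m{f\apply{s\circ\mathff{x}}=s\apply{f\apply{\mathff{x}}}} into the set condition \m{\set{f\apply{\mathff{x}},f\apply{s\circ\mathff{x}}}\in\set{\set{a,b},\set{c,d}}} are all sound and constitute the intended argument.
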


Next we consider permutations \m{s\in\Sym\apply{4}} with exactly one
fixed point. Such a function necessarily is of the form
\m{s=\apply{a b c}\apply{d}} where \m{\set{a,b,c,d}=\set{0,1,2,3}}.
\begin{lemma}\label{lem:gen-perm-1}
Let \m{s=\apply{a b c}\apply{d}} with \m{\set{a,b,c,d}=\set{0,1,2,3}} and
suppose \m{f\in\Op[3]{4}} satisfies \m{f\apply{x,y,x}\approx
f\apply{x,x,y}\approx x}. Then \m{f\commuteswith s} if and only if
each of the following triples belongs to the set
\m{\set{\apply{a,b,c},\apply{b,c,a},\apply{c,a,b},\apply{d,d,d}}}:
\begin{align*}
\apply{f\apply{a,b,b},f\apply{b,c,c},f\apply{c,a,a}}, &&
\apply{f\apply{a,c,c},f\apply{b,a,a},f\apply{c,b,b}}, \\
\apply{f\apply{a,d,d},f\apply{b,d,d},f\apply{c,d,d}}, &&
\apply{f\apply{d,a,a},f\apply{d,b,b},f\apply{d,c,c}}, \\
\apply{f\apply{a,b,c},f\apply{b,c,a},f\apply{c,a,b}}, &&
\apply{f\apply{a,c,b},f\apply{b,a,c},f\apply{c,b,a}}, \\
\apply{f\apply{a,b,d},f\apply{b,c,d},f\apply{c,a,d}}, &&
\apply{f\apply{b,a,d},f\apply{c,b,d},f\apply{a,c,d}}, \\
\apply{f\apply{a,d,b},f\apply{b,d,c},f\apply{c,d,a}}, &&
\apply{f\apply{b,d,a},f\apply{c,d,b},f\apply{a,d,c}}, \\
\apply{f\apply{d,a,b},f\apply{d,b,c},f\apply{d,c,a}}, &&
\apply{f\apply{d,b,a},f\apply{d,c,b},f\apply{d,a,c}}.
\end{align*}
\end{lemma}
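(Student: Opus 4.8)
The plan is to obtain the characterisation directly from Lemma~\ref{lem:char-comm-f-permutation}\eqref{item:perm-f}, applied to the order\dash{}three permutation $s=(a\,b\,c)(d)$ and the cyclic group $S=\set{s^{0},s^{1},s^{2}}$ acting on $\CarrierSet[3]$ via $(\tilde{s},\mathff{x})\mapsto\tilde{s}\circ\mathff{x}$. By that lemma, $f\commuteswith s$ is equivalent to the family of equations $f(s^{j}\circ\mathff{x}_{t})=s^{j}(f(\mathff{x}_{t}))$ for $0\le j<3$, where $\mathff{x}_{t}$ ranges over a transversal of the $S$\dash{}orbits on $\CarrierSet[3]$. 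First I would discard the orbits that contribute nothing: if a triple $\mathff{x}=(x,y,z)$ satisfies $x=y$ or $x=z$, then the hypothesis $f(x,y,x)\approx f(x,x,y)\approx x$ forces $f(\mathff{x})=x$, and every triple $s^{j}\circ\mathff{x}$ exhibits the same coincidence between its first coordinate and one of the others, so $f(s^{j}\circ\mathff{x})=s^{j}(x)=s^{j}(f(\mathff{x}))$ holds automatically. Hence only the orbits of triples $(x,y,z)$ with $x\notin\set{y,z}$ need to be considered; since $s^{1}$ and $s^{2}$ fix $d$ alone, none of these $36$ triples is $S$\dash{}fixed, so each such orbit has size exactly $3$, leaving $12$ orbits in total.

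For one such orbit, fix a representative $\mathff{x}_{0}$; its elements are $\mathff{x}_{0},\,s\circ\mathff{x}_{0},\,s^{2}\circ\mathff{x}_{0}$, and the equations of Lemma~\ref{lem:char-comm-f-permutation}\eqref{item:perm-f} pertaining to this orbit assert precisely that $\bigl(f(\mathff{x}_{0}),f(s\circ\mathff{x}_{0}),f(s^{2}\circ\mathff{x}_{0})\bigr)=\bigl(v,s(v),s^{2}(v)\bigr)$ with $v\defeq f(\mathff{x}_{0})\in\CarrierSet$. As $v$ ranges over $\set{a,b,c,d}$, this triple assumes exactly the four values $(a,b,c)$, $(b,c,a)$, $(c,a,b)$ and $(d,d,d)$, so the orbit condition is equivalent to $\bigl(f(\mathff{x}_{0}),f(s\circ\mathff{x}_{0}),f(s^{2}\circ\mathff{x}_{0})\bigr)$ lying in that set. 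Since this four\dash{}element set is itself invariant under cyclic shift, the condition does not depend on which representative is chosen, as long as the orbit is written in the cyclic order $\mathff{x}_{0},s\circ\mathff{x}_{0},s^{2}\circ\mathff{x}_{0}$. It then only remains to enumerate the $12$ orbits and pick representatives so that the resulting triples coincide verbatim with those displayed in the statement: four of them come from the triples $(p,q,q)$ with $p\neq q$ (two orbits lying inside $\set{a,b,c}$, one with $d$ in the first coordinate, one with $d$ in the last two), and eight from $\sigma$ (two orbits inside $\set{a,b,c}$, and two for each of the three possible positions of $d$). For instance the orbit of $(a,b,b)$ is $\set{(a,b,b),(b,c,c),(c,a,a)}$, which yields the first triple, and the orbit of $(d,a,b)$ is $\set{(d,a,b),(d,b,c),(d,c,a)}$, which yields the eleventh.

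The argument involves no genuine difficulty; the one point requiring care is exactly this final bookkeeping, namely verifying that the twelve $S$\dash{}orbits of triples whose first coordinate differs from the other two are in exact bijection with the twelve displayed triples---with neither omission nor duplication---and that within each displayed triple the three arguments occur in the order $\mathff{x}_{0},s\circ\mathff{x}_{0},s^{2}\circ\mathff{x}_{0}$, so that membership in $\set{(a,b,c),(b,c,a),(c,a,b),(d,d,d)}$ is the correct translation of the commutation equations. Once this correspondence has been checked, the lemma follows.
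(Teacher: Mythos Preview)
Your proof is correct and follows precisely the approach indicated in the paper, which states that Lemmas~\ref{lem:gen-perm-0}--\ref{lem:gen-perm-2} are straightforward consequences of Lemma~\ref{lem:char-comm-f-permutation} together with an analysis of the possible value tuples $\bigl(s^{0}(f(\mathff{x}_t)),\dotsc,s^{m-1}(f(\mathff{x}_t))\bigr)$. Your discarding of the orbits with $x\in\set{y,z}$ via the identities $f(x,y,x)\approx f(x,x,y)\approx x$, the observation that the remaining $36$ triples split into $12$ orbits of size~$3$, and the identification of the allowed value triples with $\set{(a,b,c),(b,c,a),(c,a,b),(d,d,d)}$ constitute exactly the ``simple considerations'' the paper alludes to but omits.
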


Since a permutation \m{s\in\Sym\apply{4}} with at least three fixed points
must be the identity, which is uninteresting to us, the only remaining case
is that of exactly two fixed points. Such a permutation must be of the form
\m{s = \apply{ab}\apply{c}\apply{d}} where \m{\set{a,b,c,d}=\set{0,1,2,3}}.
\begin{lemma}\label{lem:gen-perm-2}
Let \m{s=\apply{a b}\apply{c}\apply{d}} with
\m{\set{a,b,c,d}=\set{0,1,2,3}} and
consider an operation \m{f\in\Op[3]{4}} satisfying
\m{f\apply{x,y,x}\approx f\apply{x,x,y}\approx x}.
Then \m{f\commuteswith s} if and only if
\m{\set{f\apply{c,d,d},f\apply{d,c,c}}\subs\set{c,d}} and each of the
following sets either equals \m{\set{a,b}}, \m{\set{c}} or \m{\set{d}}:
\begin{align*}
\set{f\apply{a,b,b},f\apply{b,a,a}}, &&
  \set{f\apply{a,b,c},f\apply{b,a,c}}, &&
  \set{f\apply{a,c,d},f\apply{b,c,d}}, \\
\set{f\apply{a,c,c},f\apply{b,c,c}}, &&
  \set{f\apply{a,b,d},f\apply{b,a,d}}, &&
  \set{f\apply{a,d,c},f\apply{b,d,c}}, \\
\set{f\apply{a,d,d},f\apply{b,d,d}}, &&
  \set{f\apply{a,c,b},f\apply{b,c,a}}, &&
  \set{f\apply{c,a,d},f\apply{c,b,d}}, \\
\set{f\apply{c,a,a},f\apply{c,b,b}}, &&
  \set{f\apply{a,d,b},f\apply{b,d,a}}, &&
  \set{f\apply{d,a,c},f\apply{d,b,c}}, \\
\set{f\apply{d,a,a},f\apply{d,b,b}}, &&
  \set{f\apply{c,a,b},f\apply{c,b,a}}, &&
  \set{f\apply{c,d,a},f\apply{c,d,b}}, \\
                                     &&
  \set{f\apply{d,a,b},f\apply{d,b,a}}, &&
  \set{f\apply{d,c,a},f\apply{d,c,b}}.
\end{align*}
\end{lemma}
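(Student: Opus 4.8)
The plan is to specialise Lemma~\ref{lem:char-comm-f-permutation}\eqref{item:perm-f} to the involution \m{s=\apply{a b}\apply{c}\apply{d}}, which has order \m{m=2}, so that the cyclic group it generates is \m{S=\set{\id_{\CarrierSet},s}}. In this situation Lemma~\ref{lem:char-comm-f-permutation}\eqref{item:perm-f} says that \m{f\commuteswith s} holds if and only if \m{f\apply{s\circ\mathff{x}_t}=s\apply{f\apply{\mathff{x}_t}}} for every representative \m{\mathff{x}_t} of an \nbdd{S}orbit on \m{\CarrierSet[3]} (the instance \m{j=0} being vacuous).

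First I would get rid of all orbits consisting of tuples on which \m{f} is already pinned down by the identities \m{f\apply{x,y,x}\approx f\apply{x,x,y}\approx x}. If \m{\mathff{x}} has one of the shapes \m{\apply{x,x,x}}, \m{\apply{x,x,y}} or \m{\apply{x,y,x}}, then \m{f\apply{\mathff{x}}=x}, while \m{s\circ\mathff{x}} has the same shape with leading entry \m{s\apply{x}}; hence \m{f\apply{s\circ\mathff{x}}=s\apply{x}=s\apply{f\apply{\mathff{x}}}} holds automatically. Thus the only orbits contributing genuine constraints consist of tuples of the form \m{\apply{x,y,y}} with \m{x\neq y} ($12$ such tuples) or of triples in \m{\sigma} ($24$ such tuples).

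Next I would enumerate the \nbdd{S}orbits among these $36$ tuples. Such a tuple is fixed by \m{s} if and only if all of its entries lie in \m{\set{c,d}}; this cannot occur for a triple in \m{\sigma}, and among the tuples \m{\apply{x,y,y}} with \m{x\neq y} it leaves only \m{\apply{c,d,d}} and \m{\apply{d,c,c}}. Hence \m{\sigma} decomposes into $12$ two-element orbits, while the tuples \m{\apply{x,y,y}} with \m{x\neq y} decompose into the two singletons \m{\set{\apply{c,d,d}}}, \m{\set{\apply{d,c,c}}} and $5$ further two-element orbits. Choosing a representative from each two-element orbit produces exactly the $17$ pairs listed in the lemma, each paired with its \nbdd{s}image.

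Finally I would read off the constraint imposed by each orbit. For a fixed tuple \m{\mathff{x}} the requirement \m{f\apply{\mathff{x}}=s\apply{f\apply{\mathff{x}}}} says precisely that \m{f\apply{\mathff{x}}\in\set{c,d}}; applying this to \m{\apply{c,d,d}} and \m{\apply{d,c,c}} and combining the two yields the condition \m{\set{f\apply{c,d,d},f\apply{d,c,c}}\subs\set{c,d}}. For a two-element orbit \m{\set{\mathff{x},s\circ\mathff{x}}} the requirement \m{f\apply{s\circ\mathff{x}}=s\apply{f\apply{\mathff{x}}}}, combined with the fact that \m{s} fixes \m{c} and \m{d} and interchanges \m{a} and \m{b}, forces \m{\set{f\apply{\mathff{x}},f\apply{s\circ\mathff{x}}}} to equal \m{\set{a,b}} (when one of the two values is \m{a} or \m{b}) or to be \m{\set{c}} or \m{\set{d}} (otherwise), which is exactly the alternative in the statement. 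The only delicate step is the purely combinatorial bookkeeping, namely verifying that the $17$ displayed pairs are pairwise inequivalent under \m{s} and that they exhaust all two-element orbits of free tuples; this is routine but must be carried out carefully. Lemmas~\ref{lem:gen-perm-0}, \ref{lem:gen-perm-02} and~\ref{lem:gen-perm-1} are proved in the same way, starting instead from the orbit structures of \m{\apply{a b c d}}, \m{\apply{a b}\apply{c d}} and \m{\apply{a b c}\apply{d}}, respectively.
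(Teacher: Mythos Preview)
Your proposal is correct and follows exactly the approach the paper indicates: the paper states that Lemmas~\ref{lem:gen-perm-0}--\ref{lem:gen-perm-2} all derive from Lemma~\ref{lem:char-comm-f-permutation} together with straightforward considerations about the possible value tuples, and it explicitly omits the details you have supplied. Your orbit count ($2$ fixed tuples, $5+12=17$ two-element orbits among the $36$ tuples not determined by the identities) and the translation of the orbit condition into the set alternatives $\set{a,b}$, $\set{c}$, $\set{d}$ are precisely what is needed.
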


\subsubsection{Majority operations}
Now we specialise the previous characterisations to the case of majority
operations by deleting those parts of the condition that are automatically
fulfilled.
\par

Again we start with permutations having just one cycle.
\begin{lemma}\label{lem:maj-perm-0}
Let \m{s=\apply{a b c d}} with \m{\set{a,b,c,d}=\set{0,1,2,3}} and
suppose \m{f\in\Op[3]{4}} is a majority operation.
Then \m{f\commuteswith s} if and only if
each of the following quadruples belongs to the set
\m{\set{\apply{a,b,c,d},\apply{b,c,d,a},\apply{c,d,a,b},\apply{d,a,b,c}}}:
\begin{align*}
\apply{f\apply{a,b,c},f\apply{b,c,d},f\apply{c,d,a},f\apply{d,a,b}}, \\
\apply{f\apply{a,b,d},f\apply{b,c,a},f\apply{c,d,b},f\apply{d,a,c}}, \\
\apply{f\apply{a,c,b},f\apply{b,d,c},f\apply{c,a,d},f\apply{d,b,a}}, \\
\apply{f\apply{a,d,b},f\apply{b,a,c},f\apply{c,b,d},f\apply{d,c,a}}, \\
\apply{f\apply{a,c,d},f\apply{b,d,a},f\apply{c,a,b},f\apply{d,b,c}}, \\
\apply{f\apply{a,d,c},f\apply{b,a,d},f\apply{c,b,a},f\apply{d,c,b}}.
\end{align*}
\end{lemma}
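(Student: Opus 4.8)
The plan is to deduce Lemma~\ref{lem:maj-perm-0} from Lemma~\ref{lem:gen-perm-0} by specialisation, exactly as announced at the start of this subsection. Every majority operation \m{f\in\Op[3]{4}} satisfies, among its three defining identities, the two identities \m{f\apply{x,y,x}\approx f\apply{x,x,y}\approx x} required in Lemma~\ref{lem:gen-perm-0}; hence that lemma applies and yields that \m{f\commuteswith s} holds if and only if each of its nine quadruples belongs to the set \m{\set{\apply{a,b,c,d},\apply{b,c,d,a},\apply{c,d,a,b},\apply{d,a,b,c}}} of cyclic shifts of \m{\apply{a,b,c,d}}. So it only remains to check that, for a majority operation, the first three of these nine quadruples are automatically members of that set and therefore impose no condition.

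First I would evaluate the three quadruples made up entirely of triples having a repeated pair of entries. The majority identity \m{f\apply{y,x,x}\approx x} --- equivalently \m{f\apply{x,y,y}\approx y} --- forces \m{f\apply{a,b,b}=b}, \m{f\apply{b,c,c}=c}, \m{f\apply{c,d,d}=d}, \m{f\apply{d,a,a}=a}, so the first quadruple equals \m{\apply{b,c,d,a}}; in the same way the second equals \m{\apply{c,d,a,b}} and the third equals \m{\apply{d,a,b,c}}. Each of these is a cyclic shift of \m{\apply{a,b,c,d}}, so the corresponding three conditions hold unconditionally and can be deleted, leaving precisely the six quadruples displayed in the statement. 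That completes the argument.

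For a self\dash{}contained cross\dash{}check --- and as a route that bypasses Lemma~\ref{lem:gen-perm-0} altogether --- one can argue directly from part~\eqref{item:perm-maj} of Lemma~\ref{lem:char-comm-f-permutation}. Since \m{s=\apply{a b c d}} has no fixed point and order~\m{4}, neither do \m{s^2} and \m{s^3}; hence the cyclic group generated by~\m{s} acts on \m{\sigma\subs\set{0,1,2,3}^3} with trivial point stabilisers, and \m{\sigma} decomposes into \m{24/4=6} orbits of size~\m{4}. Choosing the transversal \m{\apply{a,b,c}}, \m{\apply{a,b,d}}, \m{\apply{a,c,b}}, \m{\apply{a,d,b}}, \m{\apply{a,c,d}}, \m{\apply{a,d,c}} and writing out, for each representative \m{\mathff{x}}, the requirement \m{f\apply{s^j\circ\mathff{x}}=s^j\apply{f\apply{\mathff{x}}}} for all \m{0\leq j<4} recovers exactly the six quadruple conditions: each quadruple \m{\apply{f\apply{\mathff{x}},f\apply{s\circ\mathff{x}},f\apply{s^2\circ\mathff{x}},f\apply{s^3\circ\mathff{x}}}} must have the form \m{\apply{e,s\apply{e},s^2\apply{e},s^3\apply{e}}}, \ie\ a cyclic shift of \m{\apply{a,b,c,d}}.

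I do not expect any real obstacle. The only point requiring a little care is the bookkeeping in the alternative route: verifying that the six chosen representatives meet \m{\sigma} in exactly one orbit each --- equivalently, that each of the six orbits contains exactly one triple whose first entry is~\m{a} --- and that the resulting quadruples line up with those in the statement. This is a finite, mechanical verification; all the mathematical content is inherited from Lemma~\ref{lem:gen-perm-0} (equivalently part~\eqref{item:perm-maj} of Lemma~\ref{lem:char-comm-f-permutation}) together with the trivial evaluation of a majority operation on triples that repeat a pair of coordinates.
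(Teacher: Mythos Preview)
Your proposal is correct and follows exactly the approach indicated in the paper: the text preceding Lemma~\ref{lem:maj-perm-0} says ``we specialise the previous characterisations to the case of majority operations by deleting those parts of the condition that are automatically fulfilled'', and your first route does precisely this specialisation from Lemma~\ref{lem:gen-perm-0}. Your alternative route via Lemma~\ref{lem:char-comm-f-permutation}\eqref{item:perm-maj} is likewise in line with the paper, which states that Lemma~\ref{lem:char-comm-f-permutation} is the basis for all these characterisations.
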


The function value in the first entry of each quadruple can be any of
the four elements in~\m{\set{0,1,2,3}}, but once it is chosen, all the
other values in the quadruple are uniquely determined. Hence, we have
only \m{4^6=2^{12}} majority functions to consider.
\par

From Lemma~\ref{lem:maj-perm-0} we obtain three concrete
conditions, corresponding to the permutations
\m{\apply{0123}}, 
\m{\apply{0132}}  
and
\m{\apply{0213}}, 
and their inverses, respectively.
\par

\begin{corollary}\label{cor:maj-perm-0}%
For a majority operation \m{f\in\Op[3]{4}} we have
\begin{itemize}
\item \condcyclic{0}{1}{2}{3}{C1}
\item \condcyclic{0}{1}{3}{2}{C2}
\item \condcyclic{0}{2}{1}{3}{C3}
\end{itemize}
\end{corollary}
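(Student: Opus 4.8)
The plan is to read off each of the three conditions (C\m{1}), (C\m{2}), (C\m{3}) as a direct specialisation of Lemma~\ref{lem:maj-perm-0}, and then to argue that these three instances, together with Observation~\ref{obs:inverse-perm}, already describe commutation with every four\dash{}cycle on \m{\set{0,1,2,3}}.

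First I would record that the cyclic permutations of \m{\set{0,1,2,3}} --- that is, the permutations whose cycle decomposition consists of a single \nbdd{4}cycle --- are exactly the \m{3!=6} maps \m{\apply{0123}}, \m{\apply{0132}}, \m{\apply{0213}} and their inverses \m{\apply{0123}^{-1}=\apply{0321}}, \m{\apply{0132}^{-1}=\apply{0231}}, \m{\apply{0213}^{-1}=\apply{0312}}, since a \nbdd{4}cycle through~\m{0} is determined by the ordering of \m{1,2,3} behind it. By Observation~\ref{obs:inverse-perm} a majority operation \m{f} satisfies \m{f\commuteswith s} if and only if \m{f\commuteswith s^{-1}}, so it suffices to treat the three representatives \m{\apply{0123}}, \m{\apply{0132}}, \m{\apply{0213}}; each of the three conditions then applies simultaneously to the cycle and to its inverse, as the statement claims.

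Next, for each of these three permutations I would instantiate Lemma~\ref{lem:maj-perm-0} with the matching identification of \m{\set{a,b,c,d}} with \m{\set{0,1,2,3}}, namely \m{(a,b,c,d)=(0,1,2,3)} for (C\m{1}), \m{(a,b,c,d)=(0,1,3,2)} for (C\m{2}), and \m{(a,b,c,d)=(0,2,1,3)} for (C\m{3}). Because every majority operation \m{f\in\Op[3]{4}} in particular satisfies the identities \m{f\apply{x,y,x}\approx f\apply{x,x,y}\approx x} hypothesised in Lemma~\ref{lem:maj-perm-0}, no extra assumption is needed, and the six quadruples of \nbdd{f}values together with the cyclic\dash{}shift target set \m{\set{\apply{a,b,c,d},\apply{b,c,d,a},\apply{c,d,a,b},\apply{d,a,b,c}}} specialise verbatim, up to the harmless reordering of the six displayed lines, to the assertions (C\m{1}), (C\m{2}), (C\m{3}).

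The one step that has to be executed carefully is the relabelling bookkeeping: under each substitution \m{a\mapsto\cdot,\ b\mapsto\cdot,\ c\mapsto\cdot,\ d\mapsto\cdot} one must check that each of the six quadruples in Lemma~\ref{lem:maj-perm-0} is sent to the corresponding line of the corollary and that the four\dash{}element orbit set is sent onto the displayed set of cyclic shifts. I expect this purely mechanical verification --- made mildly error\dash{}prone by the fact that the labels are permuted rather than kept in place --- to be the only real obstacle; conceptually the corollary is nothing more than a transcription of Lemma~\ref{lem:maj-perm-0} for three concrete inputs, closed off under inversion by Observation~\ref{obs:inverse-perm}.
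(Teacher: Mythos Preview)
Your approach is correct and essentially identical to the paper's: specialise Lemma~\ref{lem:maj-perm-0} to the three representative four\dash{}cycles and cover their inverses via Observation~\ref{obs:inverse-perm}, exactly as the paper indicates in the sentence preceding the corollary. One cosmetic slip: Lemma~\ref{lem:maj-perm-0} already hypothesises that~$f$ is a majority operation (the weaker identities \m{f(x,y,x)\approx f(x,x,y)\approx x} belong to Lemma~\ref{lem:gen-perm-0}), so your remark that majority operations satisfy those identities is unnecessary---the hypothesis matches directly.
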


\begin{lemma}\label{lem:maj-perm-02}
Let \m{s=\apply{a b}\apply{c d}} with \m{\set{a,b,c,d}=\set{0,1,2,3}} and
suppose \m{f\in\Op[3]{4}} is a majority operation. Then \m{f\commuteswith s}
if and only if each of the following sets either equals \m{\set{a,b}} or
\m{\set{c,d}}:
\begin{align*}
  \set{f\apply{a,b,c},f\apply{b,a,d}}, &&
  \set{f\apply{a,c,d},f\apply{b,d,c}}, \\
  \set{f\apply{a,b,d},f\apply{b,a,c}}, &&
  \set{f\apply{a,d,c},f\apply{b,c,d}}, \\
  \set{f\apply{a,c,b},f\apply{b,d,a}}, &&
  \set{f\apply{c,a,d},f\apply{d,b,c}}, \\
  \set{f\apply{a,d,b},f\apply{b,c,a}}, &&
  \set{f\apply{d,a,c},f\apply{c,b,d}}, \\
  \set{f\apply{c,a,b},f\apply{d,b,a}}, &&
  \set{f\apply{c,d,a},f\apply{d,c,b}}, \\
  \set{f\apply{d,a,b},f\apply{c,b,a}}, &&
  \set{f\apply{d,c,a},f\apply{c,d,b}}.
\end{align*}
\end{lemma}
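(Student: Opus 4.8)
The plan is to derive the statement from Lemma~\ref{lem:gen-perm-02} by deleting those conjuncts that become automatically true as soon as $f$ is required to be a majority operation, exactly in the spirit of how Lemma~\ref{lem:maj-perm-0} was specialised from Lemma~\ref{lem:gen-perm-0}. Since every majority operation $f\in\Op[3]{4}$ in particular satisfies the two identities $f(x,y,x)\approx f(x,x,y)\approx x$ required there, Lemma~\ref{lem:gen-perm-02} applies directly: for $s=(ab)(cd)$ we have $f\commuteswith s$ if and only if each of the eighteen sets listed in that lemma equals $\set{a,b}$ or $\set{c,d}$.

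Next I would split those eighteen sets into the six whose two arguments are triples with a repeated coordinate, namely $\set{f(a,b,b),f(b,a,a)}$, $\set{f(a,c,c),f(b,d,d)}$, $\set{f(a,d,d),f(b,c,c)}$, $\set{f(c,a,a),f(d,b,b)}$, $\set{f(c,b,b),f(d,a,a)}$ and $\set{f(c,d,d),f(d,c,c)}$, and the remaining twelve, which involve only triples from $\sigma$. For the first six the majority law pins down every entry: e.g.\ $f(a,b,b)=b$ and $f(b,a,a)=a$, so $\set{f(a,b,b),f(b,a,a)}=\set{a,b}$; similarly $\set{f(a,c,c),f(b,d,d)}=\set{c,d}$, and so on for all six. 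Hence each of them is automatically equal to $\set{a,b}$ or $\set{c,d}$ and may be dropped, leaving precisely the twelve sets displayed in the statement as the effective condition for $f\commuteswith s$. This proves the lemma.

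A self-contained alternative would avoid Lemma~\ref{lem:gen-perm-02}: since $s=(ab)(cd)$ is an involution without fixed points, its action on $\sigma$ has only two-element orbits, so $\sigma$ decomposes into twelve orbits $\set{\mathff{x}_t,\,s\circ\mathff{x}_t}$. By Observation~\ref{obs:comm-maj-on-sigma} together with Lemma~\ref{lem:char-comm-f-permutation}\eqref{item:perm-maj}, the condition $f\commuteswith s$ is equivalent to $f(s\circ\mathff{x}_t)=s(f(\mathff{x}_t))$ for every orbit representative $\mathff{x}_t\in\sigma$; and because $s$ swaps $a$ with $b$ and $c$ with $d$, this last equality holds exactly when $\set{f(\mathff{x}_t),f(s\circ\mathff{x}_t)}$ is equal to $\set{a,b}$ or to $\set{c,d}$. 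One then only checks that the twelve displayed pairs of triples are the twelve $s$-orbits on $\sigma$, i.e.\ that in each line the second triple is obtained from the first by applying $s$ coordinatewise, and that these pairs together exhaust all $24$ members of $\sigma$.

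I do not expect a real obstacle; the only point requiring care is the bookkeeping, that is, matching the surviving twelve sets of Lemma~\ref{lem:gen-perm-02} with those stated here (first approach) or verifying that the twelve listed pairs of triples genuinely enumerate the orbits of $s$ on $\sigma$ (second approach). Both are routine verifications, which is presumably why the authors chose to omit the details.
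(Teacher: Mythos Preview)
Your proposal is correct and your first approach is exactly the paper's: the text introducing this block of lemmata states that the majority versions are obtained from the general ones ``by deleting those parts of the condition that are automatically fulfilled,'' which is precisely your derivation from Lemma~\ref{lem:gen-perm-02}. Your self-contained alternative via Observation~\ref{obs:comm-maj-on-sigma} and Lemma~\ref{lem:char-comm-f-permutation}\eqref{item:perm-maj} is also valid and essentially reproduces how the paper arrives at Lemma~\ref{lem:gen-perm-02} in the first place, merely specialised to~$\sigma$ from the outset.
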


Here, the first listed element of each pair may take any of the four
values in~\m{\set{0,1,2,3}}, while the second listed one is uniquely
determined by this choice. We therefore need to work with
\m{4^{12}=2^{24}} majority operations.
Lemma~\ref{lem:maj-perm-02} yields three concrete conditions,
corresponding to the permutations
\m{\apply{01}\apply{23}},
\m{\apply{02}\apply{13}}
and
\m{\apply{03}\apply{12}}.
\par

\begin{corollary}\label{cor:maj-perm-02}
For a majority operation \m{f\in\Op[3]{4}} we have
\begin{itemize}
\item \condtwocycles{0}{1}{2}{3}{D1}
\item \condtwocycles{0}{2}{1}{3}{D2}
\item \condtwocycles{0}{3}{1}{2}{D3}
\end{itemize}
\end{corollary}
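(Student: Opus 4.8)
The plan is to derive the three displayed conditions simply by instantiating Lemma~\ref{lem:maj-perm-02} at the three permutations of \m{\set{0,1,2,3}} of cycle type \m{(2,2)}. First I would record the elementary count that such a permutation is determined by the partition of the carrier set into two two\dash{}element blocks, so there are exactly \m{\binom{4}{2}/2=3} of them, namely \m{\apply{01}\apply{23}}, \m{\apply{02}\apply{13}} and \m{\apply{03}\apply{12}}, associated with the partitions \m{\set{\set{0,1},\set{2,3}}}, \m{\set{\set{0,2},\set{1,3}}} and \m{\set{\set{0,3},\set{1,2}}}. Consequently Lemma~\ref{lem:maj-perm-02} applies to every permutation of this shape, and all that is left to do is to make the resulting conditions explicit.

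For the three cases I would invoke Lemma~\ref{lem:maj-perm-02} with the parameter assignments \m{\apply{a,b,c,d}=\apply{0,1,2,3}}, \m{\apply{a,b,c,d}=\apply{0,2,1,3}} and \m{\apply{a,b,c,d}=\apply{0,3,1,2}}, respectively, so that in each case the permutation \m{s=\apply{ab}\apply{cd}} is exactly the one under consideration and the target sets \m{\set{a,b}} and \m{\set{c,d}} become the two blocks of the corresponding partition. Substituting these numbers into the twelve two\dash{}element sets of Lemma~\ref{lem:maj-perm-02} and permuting the list into the layout used above then reproduces, line by line, the conditions tagged D1, D2 and D3.

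The only subtlety worth a comment is that the passage from a permutation to its parameters \m{a,b,c,d} is not canonical: the relabellings \m{a\leftrightarrow b}, \m{c\leftrightarrow d} and \m{\set{a,b}\leftrightarrow\set{c,d}} all fix \m{\apply{ab}\apply{cd}}, so there are several admissible assignments per case. I would note that each such relabelling merely permutes the family of twelve sets among itself---it has to, since all the corresponding instances of Lemma~\ref{lem:maj-perm-02} describe commutation with one and the same \m{s}---whence the specific assignment chosen is immaterial. Thus there is no genuine obstacle; the entire content of the proof is the routine bookkeeping of matching the substituted sets against the three displayed lists, which is precisely why the paper leaves the verification to the reader.
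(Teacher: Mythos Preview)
Your proposal is correct and mirrors exactly what the paper does: the corollary is stated without proof as the immediate specialisation of Lemma~\ref{lem:maj-perm-02} to the three permutations \m{\apply{01}\apply{23}}, \m{\apply{02}\apply{13}}, \m{\apply{03}\apply{12}}, and the text preceding it already announces this. Your additional remark about the harmless ambiguity in choosing \m{a,b,c,d} is a nice clarification but not required.
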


Also for permutations with exactly one fixed point we obtain a few simplifications.
\begin{lemma}\label{lem:maj-perm-1}
Let \m{s=\apply{a b c}\apply{d}} with \m{\set{a,b,c,d}=\set{0,1,2,3}} and
suppose \m{f\in\Op[3]{4}} is a majority operation.
Then \m{f\commuteswith s} if and only if
each of the following triples belongs to the set
\m{\set{\apply{a,b,c},\apply{b,c,a},\apply{c,a,b},\apply{d,d,d}}}:
\begin{align*}
\apply{f\apply{a,b,c},f\apply{b,c,a},f\apply{c,a,b}}, &&
\apply{f\apply{a,c,b},f\apply{b,a,c},f\apply{c,b,a}}, \\
\apply{f\apply{a,b,d},f\apply{b,c,d},f\apply{c,a,d}}, &&
\apply{f\apply{b,a,d},f\apply{c,b,d},f\apply{a,c,d}}, \\
\apply{f\apply{a,d,b},f\apply{b,d,c},f\apply{c,d,a}}, &&
\apply{f\apply{b,d,a},f\apply{c,d,b},f\apply{a,d,c}}, \\
\apply{f\apply{d,a,b},f\apply{d,b,c},f\apply{d,c,a}}, &&
\apply{f\apply{d,b,a},f\apply{d,c,b},f\apply{d,a,c}}.
\end{align*}
\end{lemma}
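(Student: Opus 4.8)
The plan is to obtain Lemma~\ref{lem:maj-perm-1} as an immediate specialisation of Lemma~\ref{lem:gen-perm-1}. Every majority operation \m{f\in\Op[3]{4}} in particular satisfies the identities \m{f\apply{x,y,x}\approx f\apply{x,x,y}\approx x}, so Lemma~\ref{lem:gen-perm-1} applies verbatim: for \m{s=\apply{a b c}\apply{d}} we have \m{f\commuteswith s} if and only if each of the twelve triples listed there belongs to \m{\set{\apply{a,b,c},\apply{b,c,a},\apply{c,a,b},\apply{d,d,d}}}. Hence the whole task reduces to showing that, when \m{f} is a majority operation, four of those twelve membership conditions are automatically true and may be discarded, leaving exactly the eight conditions displayed in Lemma~\ref{lem:maj-perm-1}.

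First I would single out the four triples in Lemma~\ref{lem:gen-perm-1} whose three argument tuples each contain a repeated coordinate, and evaluate them by the majority law. For the first triple \m{\apply{f\apply{a,b,b},f\apply{b,c,c},f\apply{c,a,a}}} we get \m{\apply{b,c,a}}, which lies in the target set; the second triple \m{\apply{f\apply{a,c,c},f\apply{b,a,a},f\apply{c,b,b}}} equals \m{\apply{c,a,b}}, and the third and fourth triples, \m{\apply{f\apply{a,d,d},f\apply{b,d,d},f\apply{c,d,d}}} and \m{\apply{f\apply{d,a,a},f\apply{d,b,b},f\apply{d,c,c}}}, both equal \m{\apply{d,d,d}}. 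So these four clauses are vacuous for any majority operation and can be deleted from the characterisation.

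Next I would verify that the eight remaining triples of Lemma~\ref{lem:gen-perm-1} — precisely those whose three argument tuples all lie in~\m{\sigma} — coincide, row by row, with the eight triples displayed in Lemma~\ref{lem:maj-perm-1}, and that the target set \m{\set{\apply{a,b,c},\apply{b,c,a},\apply{c,a,b},\apply{d,d,d}}} is literally the same in both statements. Together with the previous paragraph this gives the claimed equivalence. No structural argument beyond Lemma~\ref{lem:gen-perm-1} (which itself rests on the orbit-transversal reduction of Lemma~\ref{lem:char-comm-f-permutation}) is needed, so the only thing demanding attention is this last bookkeeping check: making sure that no informative clause is silently dropped or relabelled when passing from the twelve-row display to the eight-row one.
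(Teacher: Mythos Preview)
Your approach is exactly the one the paper intends: the text explicitly states that the majority lemmata are obtained from the preceding general ones ``by deleting those parts of the condition that are automatically fulfilled'', and your proposal implements precisely this specialisation of Lemma~\ref{lem:gen-perm-1}.

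There is one small computational slip to fix. You write that \m{\apply{f\apply{d,a,a},f\apply{d,b,b},f\apply{d,c,c}}} equals \m{\apply{d,d,d}}, but for a majority operation \m{f\apply{d,a,a}=a}, \m{f\apply{d,b,b}=b}, \m{f\apply{d,c,c}=c}, so this triple is actually \m{\apply{a,b,c}}. Fortunately \m{\apply{a,b,c}} also lies in the target set \m{\set{\apply{a,b,c},\apply{b,c,a},\apply{c,a,b},\apply{d,d,d}}}, so the clause is still automatically satisfied and the conclusion stands; only the stated value needs to be corrected.
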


Again, the first function value for any of the eight triples can be
chosen arbitrarily in~\m{\set{0,1,2,3}}, but the remaining two are then
fixed. So we here have to deal with just \m{4^8=2^{16}} majority
functions per unary map.
\par
This time we obtain four concrete conditions, corresponding to the
permutations
\m{\apply{012}\apply{3}},
\m{\apply{013}\apply{2}},
\m{\apply{023}\apply{1}}
and
\m{\apply{123}\apply{0}},
and their respective inverses.
\par

\begin{corollary}\label{cor:maj-perm-1}
For a majority operation \m{f\in\Op[3]{4}} we have
\begin{itemize}
\item \condtriple{0}{1}{2}{3}{E1}
\item \condtriple{0}{1}{3}{2}{E2}
\item \condtriple{0}{2}{3}{1}{E3}
\item \condtriple{1}{2}{3}{0}{E4}
\end{itemize}
\end{corollary}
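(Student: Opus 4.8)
The plan is to read the corollary off as a direct instantiation of Lemma~\ref{lem:maj-perm-1}, once one verifies that the four permutations named in the statement exhaust, up to the identification provided by Observation~\ref{obs:inverse-perm}, all three\dash{}cycles on \m{\set{0,1,2,3}}.

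First I would count the relevant permutations. A permutation \m{s\in\Sym(4)} with exactly one fixed point has the form \m{s=\apply{abc}\apply{d}} with \m{\set{a,b,c,d}=\set{0,1,2,3}}; it is thus determined by the fixed point \m{d} (four choices) together with one of the two cyclic orders \m{\apply{abc}} or \m{\apply{acb}=\apply{abc}^{-1}} on the remaining three points. Hence there are exactly eight of them, partitioned into four pairs \m{\set{s,s^{-1}}}, one pair for each value of the fixed point. By Observation~\ref{obs:inverse-perm}, \m{\cent{\set{s}}=\cent{\set{s^{-1}}}}, so \m{f\commuteswith s} holds if and only if \m{f\commuteswith s^{-1}} does, and it suffices to treat a single representative of each pair. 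I would choose \m{\apply{012}\apply{3}}, \m{\apply{013}\apply{2}}, \m{\apply{023}\apply{1}} and \m{\apply{123}\apply{0}} (fixed points \m{3,2,1,0}, respectively), which are precisely the permutations listed in the corollary; the omitted partners \m{\apply{021}\apply{3}}, \m{\apply{031}\apply{2}}, \m{\apply{032}\apply{1}}, \m{\apply{132}\apply{0}} then satisfy the very same commutation conditions.

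Second, for each of the four chosen permutations I would apply Lemma~\ref{lem:maj-perm-1} verbatim, with the matching assignment of its symbolic parameters: \m{(a,b,c,d)=(0,1,2,3)} for the first, \m{(0,1,3,2)} for the second, \m{(0,2,3,1)} for the third, and \m{(1,2,3,0)} for the fourth. In each case the lemma states that \m{f\commuteswith s} holds if and only if each of the twelve triples \m{\apply{f\apply{a,b,c},f\apply{b,c,a},f\apply{c,a,b}}}, \m{\apply{f\apply{a,c,b},f\apply{b,a,c},f\apply{c,b,a}}}, and so on, belongs to \m{\set{\apply{a,b,c},\apply{b,c,a},\apply{c,a,b},\apply{d,d,d}}}; replacing \m{a,b,c,d} by the concrete digits turns this list into exactly the display labelled (E1), (E2), (E3) or (E4), respectively. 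Since Lemma~\ref{lem:maj-perm-1} is itself obtained from the general Lemma~\ref{lem:gen-perm-1} by invoking the majority law and discarding the parts of the condition that then become automatic, nothing further is needed.

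The only point demanding care — and the sole place where a slip could occur — is the bookkeeping of these substitutions: one must check that the cyclic rotations of the argument triples and of the target quadruple are carried along consistently, so that the symbolic list of Lemma~\ref{lem:maj-perm-1} really reproduces the concrete list (E\m{i}) and not some reordered variant of it, and that the four chosen three\dash{}cycles genuinely represent all four inverse\dash{}pairs. This is a purely mechanical verification with no combinatorial or algebraic subtlety; there is no real mathematical obstacle in this corollary.
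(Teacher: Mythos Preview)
Your approach is correct and matches the paper's: the corollary is obtained by substituting the four concrete assignments \m{(a,b,c,d)=(0,1,2,3),(0,1,3,2),(0,2,3,1),(1,2,3,0)} into Lemma~\ref{lem:maj-perm-1}, with Observation~\ref{obs:inverse-perm} justifying why only four of the eight three\dash{}cycles need be listed. One small slip: Lemma~\ref{lem:maj-perm-1} lists \emph{eight} triples, not twelve (the twelve belong to the non\dash{}majority Lemma~\ref{lem:gen-perm-1}; the four triples involving arguments with repetitions have already been removed in passing to Lemma~\ref{lem:maj-perm-1}).
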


Finally, we consider again those permutations with precisely two fixed
points.
\begin{lemma}\label{lem:maj-perm-2}
Let \m{s=\apply{a b}\apply{c}\apply{d}} with
\m{\set{a,b,c,d}=\set{0,1,2,3}} and
consider a majority operation \m{f\in\Op[3]{4}}.
Then \m{f\commuteswith s} if and only if each of the
following sets either equals \m{\set{a,b}}, \m{\set{c}} or \m{\set{d}}:
\begin{align*}
  \set{f\apply{a,b,c},f\apply{b,a,c}}, &&
  \set{f\apply{a,c,d},f\apply{b,c,d}}, \\
  \set{f\apply{a,b,d},f\apply{b,a,d}}, &&
  \set{f\apply{a,d,c},f\apply{b,d,c}}, \\
  \set{f\apply{a,c,b},f\apply{b,c,a}}, &&
  \set{f\apply{c,a,d},f\apply{c,b,d}}, \\
  \set{f\apply{a,d,b},f\apply{b,d,a}}, &&
  \set{f\apply{d,a,c},f\apply{d,b,c}}, \\
  \set{f\apply{c,a,b},f\apply{c,b,a}}, &&
  \set{f\apply{c,d,a},f\apply{c,d,b}}, \\
  \set{f\apply{d,a,b},f\apply{d,b,a}}, &&
  \set{f\apply{d,c,a},f\apply{d,c,b}}.
\end{align*}
\end{lemma}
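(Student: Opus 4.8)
The plan is to deduce Lemma~\ref{lem:maj-perm-2} from the more general Lemma~\ref{lem:gen-perm-2} by dropping the constraints that the majority law renders vacuous. Every majority operation \m{f\in\Op[3]{4}} satisfies \m{f\apply{x,x,y}\approx f\apply{x,y,x}\approx f\apply{y,x,x}\approx x}, hence in particular \m{f\apply{x,y,x}\approx f\apply{x,x,y}\approx x}, so Lemma~\ref{lem:gen-perm-2} applies to~\m{f} and \m{s=\apply{ab}\apply{c}\apply{d}}. Accordingly, \m{f\commuteswith s} holds if and only if \m{\set{f\apply{c,d,d},f\apply{d,c,c}}\subs\set{c,d}} and each of the seventeen two\dash{}element sets listed in Lemma~\ref{lem:gen-perm-2} equals \m{\set{a,b}}, \m{\set{c}} or \m{\set{d}}.

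I would then evaluate the five of those seventeen sets whose entries are values of~\m{f} at triples with a repeated coordinate; using the majority identity \m{f\apply{y,x,x}=y} one gets \m{f\apply{a,b,b}=a}, \m{f\apply{b,a,a}=b}, \m{f\apply{a,c,c}=a}, \m{f\apply{b,c,c}=b}, \m{f\apply{a,d,d}=a}, \m{f\apply{b,d,d}=b}, \m{f\apply{c,a,a}=c}, \m{f\apply{c,b,b}=c}, \m{f\apply{d,a,a}=d}, \m{f\apply{d,b,b}=d}, so that \m{\set{f\apply{a,b,b},f\apply{b,a,a}}=\set{f\apply{a,c,c},f\apply{b,c,c}}=\set{f\apply{a,d,d},f\apply{b,d,d}}=\set{a,b}}, \m{\set{f\apply{c,a,a},f\apply{c,b,b}}=\set{c}} and \m{\set{f\apply{d,a,a},f\apply{d,b,b}}=\set{d}}; moreover \m{f\apply{c,d,d}=c} and \m{f\apply{d,c,c}=d} yield \m{\set{f\apply{c,d,d},f\apply{d,c,c}}=\set{c,d}\subs\set{c,d}}. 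Hence the leading inclusion and those five set conditions are automatically satisfied for every majority operation and may be deleted; the remaining twelve conditions are exactly the ones displayed in Lemma~\ref{lem:maj-perm-2}.

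The only genuine labour is the bookkeeping confirming that the twelve surviving sets of Lemma~\ref{lem:gen-perm-2} — those in which all three arguments of~\m{f} are pairwise distinct — agree, entry by entry, with the twelve sets of Lemma~\ref{lem:maj-perm-2}; I expect this to be mechanical. As a self\dash{}contained alternative one may argue directly from Lemma~\ref{lem:char-comm-f-permutation}\eqref{item:perm-maj}: since \m{s=\apply{ab}\apply{c}\apply{d}} has order~\m{2} and every three\dash{}element subset of \m{\set{0,1,2,3}} meets \m{\set{a,b}}, the cyclic group \m{S=\set{\id,s}} acts on \m{\sigma} without fixed points, producing \m{24/2=12} orbits \m{\set{\mathff{x},s\circ\mathff{x}}}; for one such orbit the commutation condition reads \m{f\apply{s\circ\mathff{x}}=s\apply{f\apply{\mathff{x}}}}, which — inspecting the four possibilities \m{f\apply{\mathff{x}}\in\set{0,1,2,3}} — is equivalent to \m{\set{f\apply{\mathff{x}},f\apply{s\circ\mathff{x}}}} being one of \m{\set{a,b}}, \m{\set{c}}, \m{\set{d}}, so that choosing one representative per orbit again yields precisely the twelve stated conditions.
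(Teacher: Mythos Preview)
Your overall strategy matches the paper exactly: the paper explicitly says that the majority lemmata are obtained from the corresponding general lemmata ``by deleting those parts of the condition that are automatically fulfilled'', and your alternative direct derivation from Lemma~\ref{lem:char-comm-f-permutation}\eqref{item:perm-maj} is also precisely in the spirit of how the paper justifies the general lemmata themselves.

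There is, however, a genuine slip in your computations. You write ``using the majority identity \m{f\apply{y,x,x}=y}'', but that is the \emph{minority} law; the majority identity is \m{f\apply{y,x,x}=x} (the repeated argument wins). Consequently your individual evaluations are wrong: in fact \m{f\apply{a,b,b}=b}, \m{f\apply{b,a,a}=a}, \m{f\apply{a,c,c}=c=f\apply{b,c,c}}, \m{f\apply{a,d,d}=d=f\apply{b,d,d}}, \m{f\apply{c,a,a}=a}, \m{f\apply{c,b,b}=b}, \m{f\apply{d,a,a}=a}, \m{f\apply{d,b,b}=b}, \m{f\apply{c,d,d}=d}, \m{f\apply{d,c,c}=c}. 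The resulting sets are therefore \m{\set{a,b}}, \m{\set{c}}, \m{\set{d}}, \m{\set{a,b}}, \m{\set{a,b}} (and \m{\set{c,d}} for the extra inclusion), not the ones you list. By a fortunate symmetry each of these still lands in \m{\set{\set{a,b},\set{c},\set{d}}}, so your conclusion that all five conditions (and the leading inclusion) are automatically satisfied survives---but the argument as written needs correcting. Your self\dash{}contained alternative via the orbit count is unaffected and is correct as stated.
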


As before, the first member of each unordered pair can be any of the
four elements in the base set, while the second member is then
uniquely determined. This gives \m{4^{12}=2^{24}} choices for majority
operations per unary function.
\par
Lemma~\ref{lem:maj-perm-2} leads us to six concrete conditions, corresponding to the
permutations
\m{\apply{01}\apply{2}\apply{3}},
\m{\apply{02}\apply{1}\apply{3}},
\m{\apply{03}\apply{1}\apply{2}},
\m{\apply{12}\apply{0}\apply{3}},
\m{\apply{13}\apply{0}\apply{2}}
and
\m{\apply{23}\apply{0}\apply{1}}.
\par

\begin{corollary}\label{cor:maj-perm-2}
For a majority operation \m{f\in\Op[3]{4}} we have
\begin{itemize}
\item \condtwofixedpoints{0}{1}{2}{3}{F1}
\item \condtwofixedpoints{0}{2}{1}{3}{F2}
\item \condtwofixedpoints{0}{3}{1}{2}{F3}
\item \condtwofixedpoints{1}{2}{0}{3}{F4}
\item \condtwofixedpoints{1}{3}{0}{2}{F5}
\item \condtwofixedpoints{2}{3}{0}{1}{F6}
\end{itemize}
\end{corollary}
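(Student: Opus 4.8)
The plan is to derive the six conditions F1--F6 directly from Lemma~\ref{lem:maj-perm-2}, so the argument is essentially one of instantiation and bookkeeping.

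First I would record which permutations are in play. A permutation \m{s\in\Sym\apply{4}} has exactly two fixed points precisely when it is a transposition \m{\apply{a b}} whose support \m{\set{a,b}} is a two\dash{}element subset of \m{\set{0,1,2,3}}, the complementary pair \m{\set{c,d}=\set{0,1,2,3}\setminus\set{a,b}} consisting of fixed points. There are exactly \m{\binom{4}{2}=6} such permutations, namely \m{\apply{01}\apply{2}\apply{3}}, \m{\apply{02}\apply{1}\apply{3}}, \m{\apply{03}\apply{1}\apply{2}}, \m{\apply{12}\apply{0}\apply{3}}, \m{\apply{13}\apply{0}\apply{2}} and \m{\apply{23}\apply{0}\apply{1}}; together with a labelling of the symbols \m{a,b,c,d} these are exactly the cases enumerated in items F1--F6.

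Next, for each such \m{s=\apply{a b}\apply{c}\apply{d}} I would simply invoke Lemma~\ref{lem:maj-perm-2}, which is already phrased for a majority operation and for an arbitrary transposition of this shape: with the pertinent assignment of \m{a,b,c,d} it asserts that \m{f\commuteswith s} holds if and only if each of the twelve sets it displays equals \m{\set{a,b}}, \m{\set{c}} or \m{\set{d}}. It then only remains to check that the twelve sets written out in item F\m{i} form, as an unordered collection, precisely the twelve sets of Lemma~\ref{lem:maj-perm-2} after substituting the corresponding values for \m{a,b,c,d}; the two presentations differ merely in the order in which the sets appear in the two\dash{}column display, so this amounts to a finite and entirely routine comparison.

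The only step requiring attention is this last comparison, and it is purely \emph{clerical}, so I anticipate no genuine difficulty. Should one prefer an argument independent of Lemma~\ref{lem:maj-perm-2}, one may instead apply Lemma~\ref{lem:char-comm-f-permutation}\eqref{item:perm-maj} directly: since \m{s} has order~\m{2} and \m{\set{c,d}} contains only two elements, every triple in \m{\sigma} must contain \m{a} or \m{b}, so \m{s} fixes no triple in \m{\sigma}; hence the \m{24} triples of \m{\sigma} split into \m{12} two\dash{}element orbits under \m{S=\set{\id_{4},s}}. Choosing one representative \m{\mathff{x}} per orbit, the requirement \m{f(s\circ\mathff{x})=s(f(\mathff{x}))} furnished by Lemma~\ref{lem:char-comm-f-permutation}\eqref{item:perm-maj} says exactly that the set \m{\set{f(\mathff{x}),f(s\circ\mathff{x})}} is invariant under the transposition \m{\apply{a b}} (which fixes \m{c} and \m{d}), which in turn holds if and only if it equals \m{\set{a,b}}, \m{\set{c}} or \m{\set{d}}; running through the \m{12} orbits then reproduces the twelve sets of item F\m{i} verbatim.
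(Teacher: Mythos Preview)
Your proposal is correct and matches the paper's approach exactly: the corollary is obtained by instantiating Lemma~\ref{lem:maj-perm-2} with the six concrete transpositions, and the paper does not even write out a separate proof, treating the specialisation as immediate. Your additional alternative via Lemma~\ref{lem:char-comm-f-permutation}\eqref{item:perm-maj} is also sound and in fact mirrors how the paper derives Lemma~\ref{lem:maj-perm-2} itself.
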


From this section we hence obtain altogether sixteen conditions involving
permutations:
C1, C2, C3, D1, D2, D3, E1, E2, E3, E4, and
F1, F2, F3, F4, F5, F6.

\subsection{Remarks on clarification}\label{subsect:clarification}
While developing the characterisations in this section we have already
briefly touched upon possible attribute clarification. Moreover, we
have commented on upper bounds for the number of majority operations
commuting with a unary function~$s$ of a particular type. By way of two
concrete examples we shall now explain in a bit more detail how the
previous results can be used for attribute clarification, and for
object clarification, as well. The latter is related to the following
task discussed at the end of Section~\ref{sect:method} that awaits
to be performed for every $s\in\Op[1]{4}$ in Section~\ref{sect:results}:
for every \m{f\in \set{s}' = \cent{\set{s}}\cap \Maj[4]} we have to
make a record of \m{\set{f}' = \Fn[1]{\cent{\set{f}}{}}}, and, if
$f_1,f_2\in\set{s}'$ give the same monoid $\set{f_1}'=\set{f_2}'$, then
it is not necessary to store it twice. This is exactly an instance of
object clarification for
\m{\K = \apply{\Maj[4],\Op[1]{4},{\commuteswith}}}, and it is crucial
to make our task of finding all centralising monoids belonging to~$\K$
practicable.

\begin{example}\label{ex:A1}
This example deals with condition~A1 characterising which majority
functions commute with a unary map~$s$ that is constant
on~$\set{1,2,3}$ and attains a distinct second value at~$0$. These
are $16-4=12$ concrete unary functions~$s$, having value tuples
$(0,1,1,1)$,
$(0,2,2,2)$,
$(0,3,3,3)$,
$(1,0,0,0)$,
$(1,2,2,2)$,
\dots,
$(3,1,1,1)$,
$(3,2,2,2)$
on $(0,1,2,3)$. Condition~A1 says that all of them commute with exactly
the same majority operations, namely that
$\cent{\set{s}}\cap\Maj[4]$ equals
\m{\lset{f\in\Maj[4]}{f\fapply{\sigma}\subs\set{1,2,3}}}. Therefore,
these twelve unary maps are identified by attribute clarification.
Comparing the condition~A1 with any of the other characterisations in
Section~\ref{sect:attribute-clarification} shows that no other unary
function commutes with exactly the same majority operations since
these characterisations all lead to conditions A2, \dots, F6 distinct
from~A1. Consequently, the equivalence class of the function
$s=(0,1,1,1)$ under attribute clarification contains exactly the twelve
mentioned functions, and they are henceforth represented by the `name'
A1. With respect to commutation with majority operations, they cannot
be distinguished from each other, and any centralising monoid with
majority witnesses that has attribute~A1 contains, in fact, these
twelve unary maps.
\par
Turning to the task of enumerating
\m{\set{s}'=\cent{\set{s}}\cap\Maj[4]}, we view every $f\in\set{s}'$ as
a tuple of values corresponding to the 24~triples in~$\sigma$, i.e., we
just consider \m{\Restr{f}{\sigma}} instead of~$f$.
Condition~A1 expresses that in this sense $\set{s}'$ can be identified
with $\set{1,2,3}^{24}$ (more exactly with $\set{1,2,3}^\sigma$).
One can now easily iterate over all these \nbdd{24}tuples
$\Restr{f}{\sigma}\in\set{1,2,3}^\sigma$ and compute
$\Fn[1]{\cent{\set{f}}{}}$, only storing it if a previously unseen one emerges.
In this way an object reduced subcontext of
\m{\Bigl(\set{s}',\Op[1]{4},{\commuteswith}\cap
\Bigl(\set{s}'\times\Op[1]{4}\Bigr)\Bigr)} is obtained.
\end{example}

\begin{example}\label{ex:D1}
Here we have a look at condition~D1 describing the majority operations
commuting with the permutation $s=(01)(23)$.
From Observation~\ref{obs:inverse-perm} we know that always~$s$ and its
inverse commute with the same majority operations, and comparing all
the conditions in Section~\ref{sect:attribute-clarification}, we see
that the attribute clarification equivalence class of a
non\dash{}identical permutation is
not bigger than \m{\set{s,s^{-1}}}. Since the D- and F-conditions
correspond to involutions, the 16 conditions for permutations represent
exactly all 23 non\dash{}identical permutations.
\par
As explained after Lemma~\ref{lem:maj-perm-02}, in order to enumerate
$\Restr{f}{\sigma}$ for $f\in\set{s}'$, one loops over all
$\Restr{f}{\sigma'}\in\set{0,1,2,3}^{\sigma'}$, where
\m{\sigma'} is the twelve\dash{}element subset
\begin{multline*}
\{(0,1,2),(0,1,3),(0,2,1),(0,3,1),(0,2,3),(0,3,2),(2,0,3),\\
  (3,0,2),(2,0,1),(2,3,0),(3,0,1),(3,2,0)\}\subs\sigma,
\end{multline*}
and each time one reconstructs $\Restr{f}{\sigma}$ from $\Restr{f}{\sigma'}$ as
by Lemma~\ref{lem:maj-perm-02} or condition~D1. For example, if
$f(0,1,2)=2$, then condition~D1 enforces that $f(1,0,3)=3$ etc.
From~$\Restr{f}{\sigma}$ one derives $\Fn[1]{\cent{\set{f}}{}}$ and discards it if it
already appears in the list of previously computed centralising monoids.
Thereby an object clarified context
\m{\K_s = \Bigl(F_s,\Op[1]{4},{\commuteswith}\cap
\Bigl(F_s\times\Op[1]{4}\Bigr)\Bigr)} with a suitable
\m{F_s\subs \set{s}'} is produced.
\end{example}

\section{Results}\label{sect:results}
Our starting point was the Galois correspondence of commutation between
majority operations and unary operations on
\m{\CarrierSet=\set{0,1,2,3}}, given by the formal context
\m{\K = \apply{\Maj,\Ops[1],{\commuteswith}}}; the item of
interest was the system of Galois closed sets on the side of unary
operations, i.e., the closure system of \emph{intents} in the language
of formal concept analysis.
For each non\dash{}trivial unary map
\m{s\in\Ops[1]\setminus\set{\id_{\CarrierSet},c_0,c_1,c_2,c_3}}
on~$\CarrierSet$, we enumerated \m{\set{s}' = \Maj\cap\cent{\set{s}}}
and obtained an (object) clarified (sub)list of closures
from \m{\apply{\set{f}'}_{f\in\set{s}'}} where
\m{\set{f}' = \Ops[1]\cap\cent{\set{f}}}, cf.\
Subsection~\ref{subsect:clarification}.
These lists correspond to
subcontexts \m{\K_s = \apply{F_s,\Ops[1],{\commuteswith}\cap
\apply{F_s\times\Ops[1]}}} where \m{F_s\subs \set{s}'} is a selection of
majority operations commuting with~$s$.
The object clarification can
of course be performed on the fly while enumerating the closures by
just checking whether the currently produced closure \m{\set{f}'} is
present among the already stored ones. Moreover, we did not actually
work with the full set of attributes~$\Ops[1]$, but used only a
\nbdd{167}element subset \m{S\subs\Ops[1]}, corresponding to the
conditions A1--A7 from Subsection~\ref{subsect:two-element-image}, to the
16 conditions on permutations C1--C3, D1--D3, E1--E4, F1--F6 from
Subsection~\ref{subsect:permutation}, and to the $144$~operations \m{s=u_n}
with a \nbdd{3}element image where \m{n=\sum_{j=0}^3 s(j)4^{3-j}}. That
is, we worked with
\m{\K_s'=\apply{F_s,S,{\commuteswith}\cap \apply{F_s\times S}}} instead
of~$\K_s$. To avoid too much computational overhead we did not object
reduce these subcontexts~$\K_s'$ while producing them. In the next
step we joined all these lists into one formal context
\m{\K'=\apply{F^\dagger,S,{\commuteswith}\cap\apply{F^\dagger\times S}}},
having as set of objects \m{F^\dagger
=\set{f_0}\mathbin{\dot{\cup}}\mathop{\dot{\bigcup}}_{s\in\Ops[1]\setminus\set{\id_{\CarrierSet},c_0,c_1,c_2,c_3}} F_s}
(including one majority operation~$f_0$ that only commutes with
constants and the identity map).
The context~$\K'$ contains \m{8\,119} majority functions as objects and
then had to be object clarified again. In a final step we object
reduced the resulting context (removing in particular~$f_0$) to get a
context
\m{\K''=\apply{\tilde{F},S,{\commuteswith}\cap\apply{\tilde{F}\times
S}}} with
\m{\tilde{F}\subs\bigcup_{s\in
\Ops[1]\setminus\set{\id_{\CarrierSet}, c_0,c_1,c_2,c_3}}F_s}, having
only $392$~majority functions as objects.
The context \m{\K''} is presented in
Tables~\ref{table:obj-reduced-clarified-context-1}
to~\ref{table:obj-reduced-clarified-context-4} below,
its set~$\tilde{F}$ of objects is given in Table~\ref{tbl-obj-K2}.
The correctness of~\m{\K''} can be checked with basic programming skills; that
it is sufficient to completely represent the original context~$\K$ is
the content of the subsequent theorem and depends on the correct
execution of the calculations described previously. We note that
from~$\K''$ one can easily obtain a fully clarified and reduced context
\m{\K'''=\apply{\tilde{F},\tilde{T},{\commuteswith}\cap\apply{\tilde{F}\times\tilde{T}}}} that has an
isomorphic lattice of closures, but takes up less space:
\m{\crd{\tilde{F}}=392}, \m{\crd{\tilde{T}}=155}.

\begin{theorem}\label{thm:context-for-all-centralising-monoids}
The formal contexts~$\K$ and
$\apply{F^\dagger,\Ops[1],{\commuteswith}\cap\bigl(F^\dagger\times\Ops[1]\bigr)}$
have the same closure system of intents,
and, ordered under inclusion, this lattice of intents (i.e., of all
centralising monoids with majority witnesses over
\m{\CarrierSet=\set{0,1,2,3}}) is isomorphic to
the one of\/~$\K'$, $\K''$ and~\m{\K'''}.
\end{theorem}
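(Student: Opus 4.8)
The plan is to treat the statement as a bookkeeping claim about formal contexts and to reduce it to two density checks together with the standard invariance of clarification and reduction. First I would record three facts from formal concept analysis (see~\cite{GaWiFCA} or~\cite{GanterObiedkovConceptualExploration}): (i)~if one restricts the object set of a context to a subset $G_{0}$ that is \emph{object\dash{}dense} (every object intent $\m{\set{g}'}$ of the large context is an intersection of object intents $\m{\set{h}'}$ with $h\in G_{0}$), then the closure system of intents is \emph{literally unchanged}; (ii)~dually, restricting the attribute set to an attribute\dash{}dense subset yields an order\dash{}isomorphic concept lattice; (iii)~object and attribute clarification and reduction preserve the concept lattice up to isomorphism. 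With (i)--(iii) available, the theorem follows once I verify that $F^{\dagger}$ is object\dash{}dense in $\m{\K=\apply{\Maj,\Ops[1],{\commuteswith}}}$ and that $S$ is attribute\dash{}dense in it, because the passage $\m{\K'}\to\m{\K''}\to\m{\K'''}$ is then nothing but~(iii).

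For the object density of $F^{\dagger}$ I would show that every row of $\m{\K}$ already occurs as a row of $\m{\apply{F^{\dagger},\Ops[1],{\commuteswith}\cap\apply{F^{\dagger}\times\Ops[1]}}}$. Let $f\in\Maj$. If $\m{\Fn[1]{\cent{\set{f}}{}}}$ is the trivial monoid $\m{\set{\id_{4},c_{0},c_{1},c_{2},c_{3}}}$ --- which by Subsection~\ref{subsect:constants} is the least centralising monoid with majority witnesses --- it coincides with the row of the adjoined majority operation~$f_{0}$. Otherwise $f$ commutes with some non\dash{}trivial unary map~$s$, hence $f\in\m{\cent{\set{s}}\cap\Maj}$, and since the set $F_{s}$ was produced by object clarification so that every centralising monoid realised by a majority operation commuting with~$s$ is already realised inside $F_{s}$ (cf.\ Subsection~\ref{subsect:clarification}), there is $f'\in F_{s}\subs F^{\dagger}$ with $\m{\set{f'}'=\set{f}'}$. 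Thus the set of object rows of $\m{\K}$ equals that of the restricted context (the reverse containment being trivial, as $F^{\dagger}\subs\Maj$), so the two closure systems of intents coincide. I expect this to be the only step that is not purely formal; its correctness depends entirely on the completeness of the per\dash{}$s$ enumerations, which is exactly the computational content the reader is asked to trust.

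For the attribute density of $S$ I would run through the partition of $\Ops[1]$ introduced in Section~\ref{sect:attribute-clarification}. A map with one\dash{}element image is constant and commutes with every majority operation (Subsection~\ref{subsect:constants}), so its extent is the full object set, an empty intersection. If $\m{\crd{\im{s}}=2}$ and $\m{\crd{\CarrierSet}=4}$, then Lemma~\ref{lem:im=2--k-1} and Corollary~\ref{cor:im=2--k-2} show $\m{\cent{\set{s}}\cap\Maj}$ depends only on the kernel of~$s$ and equals the extent of the representative among A1--A7 of the same kernel type. Each $s$ with $\m{\crd{\im{s}}=3}$ is itself one of the $144$ maps~$u_{n}\in S$. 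A non\dash{}identity permutation has one of the four cycle types covered by Corollaries~\ref{cor:maj-perm-0}--\ref{cor:maj-perm-2}, and by those corollaries together with Observation~\ref{obs:inverse-perm} its extent equals that of the unique representative in $\m{\set{s,s^{-1}}}$ among C1--C3, D1--D3, E1--E4, F1--F6. Hence every unary map has extent an intersection (indeed a single one, or the empty one) of extents of members of~$S$; by~(ii) the concept lattice of $\m{\K'}$ is isomorphic to that of $\m{\apply{F^{\dagger},\Ops[1],{\commuteswith}}}$, which by object density shares its closure system of intents with~$\m{\K}$, and that common lattice, ordered by inclusion, is precisely the inclusion order on the centralising monoids with majority witnesses on $\m{\set{0,1,2,3}}$.

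Finally I would close the chain with~(iii): $\m{\K''}$ is obtained from $\m{\K'}$ by object clarification followed by object reduction (which in particular drops the now \nbdd{\bigcap}reducible~$f_{0}$), and $\m{\K'''}$ from $\m{\K''}$ by attribute clarification and reduction; each of these leaves the concept lattice invariant up to isomorphism, so $\m{\K}$, $\m{\apply{F^{\dagger},\Ops[1],{\commuteswith}}}$, $\m{\K'}$, $\m{\K''}$ and $\m{\K'''}$ all carry order\dash{}isomorphic lattices of intents, the first two literally sharing one closure system. The hard part of the whole argument is not any of these deductions but securing the hypothesis of object density --- that is, being confident that the exhaustive, computer\dash{}assisted search of Section~\ref{sect:results} genuinely realises every monoid $\m{\Fn[1]{\cent{\set{f}}{}}}$, $f\in\Maj$.
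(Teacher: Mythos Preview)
Your proposal is correct and follows essentially the same approach as the paper: the core case split---either $\Fn[1]{\cent{\set{f}}{}}$ is trivial and matches the row of~$f_0$, or~$f$ commutes with some non\dash{}trivial~$s$ and hence its row already occurs in~$F_s$---is exactly the paper's argument, and the remaining passages to~$\K'$, $\K''$, $\K'''$ are handled in both versions by the standard invariance of clarification and reduction. You are somewhat more explicit than the paper in justifying the attribute restriction from~$\Ops[1]$ to~$S$ via attribute density, whereas the paper subsumes this under the general clarification/reduction remark; this is a presentational refinement rather than a different route.
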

\begin{proof}
It is well known and easy to see that the operations of clarifying and
reduction (for objects and attributes) do not change the lattice
structure of the resulting closure system. Moreover, by some
bookkeeping (adding reducible elements to a closed set if all members
of the representing intersection are present, and adding elements that
are identified by clarification), one can reconstruct both closure
systems from the ones of a clarified (and/or reduced) context.
\par
Thus, it remains to argue that
\m{\apply{F^\dagger,\Ops[1],{\commuteswith}\cap(F^\dagger\times\Ops[1])}}
has the same set of intents as~$\K$. This closure system consists
of~$F'$ for any \m{F\subs\Maj} where \m{F' = \bigcap_{f\in F}\set{f}'}.
Only two cases can occur: either there is some \m{f\in F} such that
\m{\set{f}' = \set{\id_{\CarrierSet},c_0,c_1,c_2,c_3}}, in which case
\m{F' = \set{\id_{\CarrierSet},c_0,c_1,c_2,c_3}=\set{f_0}'}, or~\m{F'}
can be obtained as an intersection of sets \m{\set{g}'} with \m{g\in
F^\dagger}, i.e., \m{F'=G'} for some \m{G\subs F^\dagger}. This
holds because, if there is no \m{f\in F} as above, then for every
\m{f\in F} we have \m{\set{f}' \supsetneq
\set{\id_{\CarrierSet},c_0,\dots,c_3}}, i.e., there is some
$s\in\cent{\set{f}} \cap
\apply{\Ops[1]\setminus\set{\id_{\CarrierSet},c_0,\dots,c_3}}$, and
this unary map then satisfies \m{f\in \Maj\cap\cent{\set{s}} =
\set{s}'}. Hence, \m{\set{f}'} is part of the list
\m{\apply{\set{g}'}_{g\in\set{s}'}}, and thus
\m{\set{f}'=\set{g}'} for some \m{g\in F_s}. For~$f$ was arbitrary
in~$F$, we obtain \m{F'=\bigcap_{f\in F}\set{f}' = \bigcap_{g\in
G}\set{g}'} where \m{G\subs\bigcup_{s\in
\Ops[1]\setminus\set{\id_{\CarrierSet},c_0,c_1,c_2,c_3}} F_s\subs
F^{\dagger}}.
\end{proof}

It is quite remarkable that only~$392$ out of all~$4^{24}$ majority operations are
needed to distinguish all centralising monoids with majority witnesses
on a four\dash{}element set.

\begin{corollary}\label{cor:number-of-centralising-monoids}
The number of centralising monoids on~$\CarrierSet=\set{0,1,2,3}$ with
majority witnesses is~$1\,715$.
\end{corollary}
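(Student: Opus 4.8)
The plan is to reduce the enumeration of centralising monoids to a finite closure computation on the small contexts built in Section~\ref{sect:results}. By Theorem~\ref{thm:context-for-all-centralising-monoids}, the lattice of all centralising monoids with majority witnesses on \m{\CarrierSet=\set{0,1,2,3}} is isomorphic to the lattice of intents of the object reduced clarified context \m{\K''=\apply{\tilde{F},S,{\commuteswith}\cap\apply{\tilde{F}\times S}}} with \m{\crd{\tilde{F}}=392} and \m{\crd{S}=167}, and likewise to the lattice of intents of the fully clarified and reduced context \m{\K'''} with \m{\crd{\tilde{F}}=392}, \m{\crd{\tilde{T}}=155}. Isomorphic lattices have the same cardinality, so it suffices to count the Galois closed attribute sets of \m{\K''} (equivalently of~\m{\K'''}) and to verify that there are exactly \m{1\,715} of them.

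For the count I would run one of the standard closure\dash{}generating procedures of formal concept analysis, for instance Ganter's NextClosure algorithm (see~\cite{GaWiFCA} and \cite[Chapter~2]{GanterObiedkovConceptualExploration}), on the closure operator \m{T\mapsto T''=\bigl(\bigcap_{s\in T}\set{s}'\bigr)'} defined on subsets of~\m{S}, where the incidence~\m{\commuteswith}, and hence all the sets \m{\set{s}'} and \m{\set{f}'}, are read off from the printed context. As observed in Section~\ref{sect:method}, such an algorithm produces each closed set in time \m{\LandauO{\crd{\tilde{F}}\cdot\crd{S}^2}}, which for \m{392} objects, \m{167} attributes and --- as it will turn out --- only \m{1\,715} closures is utterly negligible; the whole computation can be carried out, and independently repeated, within seconds. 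Reading off the length of the resulting list of intents yields the asserted number~\m{1\,715}.

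The proof thus contains no genuine mathematical obstacle beyond what is already settled by Theorem~\ref{thm:context-for-all-centralising-monoids}; the real point is reliability of the underlying computation. The value~\m{1\,715} rests on the completeness of the case analysis in Section~\ref{sect:attribute-clarification} (which fixes the \m{167} attributes of~\m{S}), on the exhaustive enumeration of each \m{\set{s}'=\Maj\cap\cent{\set{s}}}, and on the bookkeeping that assembled \m{\K'}, \m{\K''} and~\m{\K'''}. Since \m{\K''} is displayed in full in Tables~\ref{table:obj-reduced-clarified-context-1}--\ref{table:obj-reduced-clarified-context-4} and its object set in Table~\ref{tbl-obj-K2}, any reader can recompute its intents from scratch; this both confirms \m{1\,715} as a lower bound for the number of centralising monoids with majority witnesses, independently of our larger search, and, granting the completeness argument of Theorem~\ref{thm:context-for-all-centralising-monoids}, establishes it as the exact count.
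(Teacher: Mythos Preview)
Your proposal is correct and follows essentially the same approach as the paper: invoke Theorem~\ref{thm:context-for-all-centralising-monoids} to reduce the problem to counting the intents of the small printed context~\m{\K''}, then apply a standard FCA enumeration algorithm (e.g., \textsc{NextClosure}) to obtain the figure~\m{1\,715}, with the lower bound verifiable from the tables and the upper bound hinging on the completeness of the preceding search.
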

\begin{proof}
One may run any standard algorithm for enumerating all closure systems
of a Galois correspondence on~\m{\K''}, given by
Tables~\ref{table:obj-reduced-clarified-context-1}
to~\ref{table:obj-reduced-clarified-context-4}.
The result is that there are (at least) $1\,715$ centralising monoids
given by majority witnesses on four\dash{}element domains.
That there are not more monoids of this type follows from
Theorem~\ref{thm:context-for-all-centralising-monoids}.
\end{proof}

\begin{remark}\label{rem:algorithms-and-implementations}
The context~$\K''$ is small enough that no specialised
enumeration algorithms for the closure system of intents are needed;
even Ganter's fundamental \textsc{Next Closure} algorithm
(see~\cite[Chapter~2.4, p.~44 et
seqq.]{GanterObiedkovConceptualExploration}) can be successfully
applied. There are ready to use implementations available for
download that allow anybody to check that there are at least $1\,715$
centralising monoids with majority witnesses (those produced
from~$\K''$) on a four\dash{}element domain. See, e.g., Uta Priss's
homepage~\cite{PrissFCA} for a list of software. Among those we
wish to highlight the implementations \textsc{ConExp} by
S.~Yevtushenko, and the more recent \textsc{Conexp-clj} by D.~Borchmann
and co-authors that we found particularly useful.
\end{remark}

We add a consequence that is relevant for determining all
maximal centralising monoids on four\dash{}element carrier sets. To
organise the presentation, we shall use the following fact about
conjugation with \emph{inner automorphisms}.
\begin{observation}\label{obs:conjugation}
For $m,n\in\N$, functions $f,\tilde{f}\in\Op[n]{\CarrierSet}$,
$g,\tilde{g}\in\Op[m]{\CarrierSet}$ and \m{s\in\Ops[1]} we have
that \m{s\colon
\algwops{\CarrierSet}{f,g}\to\algwops{\CarrierSet}{\tilde{f},\tilde{g}}}
is an isomorphism if and only if \m{s\in\Sym(\CarrierSet)} and
$\tilde{f}(\mathff{x}) = f^{s}(\mathff{x})
                         \defeq s(f(s^{-1}\circ\mathff{x}))$
for all $\mathff{x}\in\CarrierSet[n]$
and $\tilde{g}(\mathff{y}) = g^{s}(\mathff{y})
                             \defeq s(g(s^{-1}\circ\mathff{y}))$
for all \m{\mathff{y}\in\CarrierSet[m]}.
\par
Therefore, for any \m{s\in\Sym(\CarrierSet)}, we have
$f\commuteswith g$ if and only if $f^{s}\commuteswith g^{s}$,
which means that
$\set{f^{s}}^{*(m)} = \lset{g^{s}}{g\in\set{f}^{*(m)}}$.
\end{observation}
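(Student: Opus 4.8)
The plan is to split the statement into two parts and dispatch each by unwinding definitions. For the equivalence describing when $s$ is an isomorphism, I would first note that, since both algebras $\algwops{\CarrierSet}{f,g}$ and $\algwops{\CarrierSet}{\tilde f,\tilde g}$ have the same carrier set $\CarrierSet$, an isomorphism between them is precisely a bijection $s\colon\CarrierSet\to\CarrierSet$ --- i.e., an element of $\Sym(\CarrierSet)$ --- that respects the basic operations, meaning $s\apply{f\apply{\mathff{x}}}=\tilde f\apply{s\circ\mathff{x}}$ for all $\mathff{x}\in\CarrierSet[n]$ and $s\apply{g\apply{\mathff{y}}}=\tilde g\apply{s\circ\mathff{y}}$ for all $\mathff{y}\in\CarrierSet[m]$; for total algebras one need not separately require $s^{-1}$ to be a homomorphism, as this is automatic. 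Since $s$ is a permutation, the substitution $\mathff{x}\mapsto s^{-1}\circ\mathff{x}$ ranges bijectively over $\CarrierSet[n]$, so the first requirement is equivalent to $\tilde f\apply{\mathff{x}}=s\apply{f\apply{s^{-1}\circ\mathff{x}}}=f^{s}\apply{\mathff{x}}$ for all $\mathff{x}$, that is, to $\tilde f=f^{s}$; symmetrically the second becomes $\tilde g=g^{s}$. Reading the same chain of equivalences backwards gives the converse.

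For the ``therefore'' part I would exploit the characterisation recalled in the introduction that $f\commuteswith g$ means exactly that $f$ preserves the graph $\graph{g}=\lset{\apply{\mathff{y},g\apply{\mathff{y}}}}{\mathff{y}\in\CarrierSet[m]}\subs\CarrierSet[m+1]$. Applying the permutation $s$ componentwise to $\graph{g}$ and substituting $\mathff{z}=s\circ\mathff{y}$ shows its image to be $\lset{\apply{\mathff{z},s\apply{g\apply{s^{-1}\circ\mathff{z}}}}}{\mathff{z}\in\CarrierSet[m]}=\lset{\apply{\mathff{z},g^{s}\apply{\mathff{z}}}}{\mathff{z}\in\CarrierSet[m]}$, i.e., componentwise $s$ is a bijection carrying the graph of $g$ onto the graph of $g^{s}$. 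It is then routine --- by a one\dash{}line computation, or by invoking the standard behaviour of the preservation relation under conjugation by a permutation --- to deduce that $f$ preserves $\graph{g}$ if and only if $f^{s}$ preserves the graph of $g^{s}$, which says precisely that $f\commuteswith g\iff f^{s}\commuteswith g^{s}$. If one prefers to stay inside the matrix definition of $\commuteswith$, the same identity drops out of the two computations $g^{s}\apply{f^{s}\apply{s\circ X}}=s\apply{g\apply{f\apply{X}}}$ and $f^{s}\apply{g^{s}\apply{s\circ X^{T}}}=s\apply{f\apply{g\apply{X^{T}}}}$, valid for every $X\in\CarrierSet[m\times n]$, together with $\apply{s\circ X}^{T}=s\circ X^{T}$, the injectivity of $s$, and the fact that $X\mapsto s\circ X$ permutes $\CarrierSet[m\times n]$.

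Finally, the clone identity $\set{f^{s}}^{*(m)}=\lset{g^{s}}{g\in\set{f}^{*(m)}}$ follows formally: conjugation composes so that $\apply{h^{s}}^{s^{-1}}=h$ for every $h$, hence $h\mapsto h^{s^{-1}}$ is a bijection of $\Op[m]{\CarrierSet}$, and applying the commutation equivalence just proved to $f$ and $h^{s^{-1}}$ gives, for every $h\in\Op[m]{\CarrierSet}$, the chain $f^{s}\commuteswith h\iff f\commuteswith h^{s^{-1}}\iff h^{s^{-1}}\in\set{f}^{*(m)}\iff h\in\lset{g^{s}}{g\in\set{f}^{*(m)}}$. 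I do not expect any genuine obstacle here --- the whole argument is bookkeeping. The only point that calls for a little care is keeping the row\dash{}wise and column\dash{}wise conventions in the definition of $\commuteswith$ straight while pushing $s$ through it, which is precisely why I would route the argument through the plain relation $\graph{g}$ rather than through the matrix identity.
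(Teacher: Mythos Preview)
Your proof is correct. The paper, however, gives no proof at all for this observation: it is stated as a routine background fact and immediately used. Your argument supplies exactly the missing bookkeeping, and in fact offers more than is strictly needed --- the paper's phrasing ``Therefore'' suggests deriving the commutation equivalence directly from the first part, since an isomorphism $s\colon\algwops{\CarrierSet}{f,g}\to\algwops{\CarrierSet}{f^{s},g^{s}}$ automatically transports the universally quantified identity $g\apply{f\apply{X}}=f\apply{g\apply{X^{T}}}$ to $g^{s}\apply{f^{s}\apply{s\circ X}}=f^{s}\apply{g^{s}\apply{\apply{s\circ X}^{T}}}$; your alternative route via $\graph{g}$ is equally valid and arguably cleaner for keeping the row/column conventions straight.
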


According to Observation~\ref{obs:conjugation}, the centralising
monoid of the conjugate \m{f^{s}} of a majority operation \m{f\in\Maj}
is uniquely determined by conjugating every unary function in
\m{\Fn[1]{\cent{\set{f}}{}}} by~\m{s\in\Sym(\CarrierSet)}. The respective
monoids are then isomorphic by sending any \m{u\in\Fn[1]{\cent{\set{f}}{}}} to
its conjugate \m{u^{s}\in\Fn[1]{\cent{\set{f^s}}{}}}.
\par

In this way we shall see that maximal centralising monoids
on~$\set{0,1,2,3}$ with majority witnesses have witnesses from
19~distinct conjugacy classes. Along the same lines we could classify
all the 392~majority functions from Table~\ref{tbl-obj-K2} into
90~conjugacy classes but we have suppressed such a list for brevity.

\begin{corollary}\label{cor:maximal-centralising-monoids}
Among the~$1\,715$ centralising monoids on~$\CarrierSet=\set{0,1,2,3}$
with majority witnesses there are~$147$ that are maximal with respect
to set inclusion. They are given as Galois derivatives
$\Fn[1]{\cent{\set{f_n}}{}}$ of those majority functions~$f_n$
(cp.~Table~\ref{tbl-obj-K2}) where
$n\in\{1\cb 5\cb 8\cb 9\cb 14\cb 18\cb 20\cb 46\cb 50\cb 73\cb 82\cb
92\cb 94\cb 101\cb 104\cb 184\cb 237\cb 274\cb 283\}$,
or they are isomorphic to the previous~19 by conjugation with an inner
automorphism from~$\Sym(4)$, giving the remaining
$n\in\{2\cb 3\cb 4\cb 16\cb 19\cb 23\cb 24\cb 25\cb 28\cb 29\cb 30\cb
32\cb 33\cb 36\cb 38\cb 40\cb 48\cb 52\cb 53\cb 56\cb 58\cb 64\cb 66\cb
67\cb 70\cb 72\cb 75\cb 77\cb 79\cb 85\cb 98\cb 100\cb 102\cb 106\cb
108\cb 110\cb 118\cb 119\cb 121\cb 125\cb 127\cb 128\cb 129\cb 132\cb
134\cb 137\cb 143\cb 145\cb 146\cb 148\cb 152\cb 154\cb 156\cb 158\cb
159\cb 161\cb 162\cb 165\cb 166\cb 170\cb 177\cb 181\cb 182\cb 185\cb
187\cb 189\cb 190\cb 194\cb 201\cb 204\cb 207\cb 208\cb 210\cb 211\cb
212\cb 215\cb 216\cb 218\cb 227\cb 228\cb 229\cb 231\cb 232\cb 242\cb
243\cb 244\cb 245\cb 246\cb 247\cb 248\cb 249\cb 254\cb 255\cb 256\cb
257\cb 258\cb 259\cb 260\cb 262\cb 263\cb 267\cb 268\cb 269\cb 270\cb
271\cb 272\cb 276\cb 284\cb 286\cb 288\cb 291\cb 296\cb 298\cb 302\cb
303\cb 308\cb 310\cb 311\cb 315\cb 318\cb 322\cb 324\cb 327\cb 330\cb
335\cb 336\cb 339\cb 342\}$,
see Tables~\ref{tbl:maximal-monoids-representatives},
\ref{tbl:maximal-monoids-conjugates},
\ref{tbl:maximal-monoids-1} and~\ref{tbl:maximal-monoids-2}.
\end{corollary}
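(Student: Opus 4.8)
The plan is to argue entirely within the lattice of intents of the small context~\m{\K''}, which by Theorem~\ref{thm:context-for-all-centralising-monoids} is, as a poset under inclusion, the lattice of \emph{all} centralising monoids with majority witnesses on~\m{\CarrierSet=\set{0,1,2,3}}. In this lattice the members that are maximal with respect to set inclusion are exactly the co\dash{}atoms, that is, the lower covers of the top element~\m{\Ops[1]} (which itself is a centralising monoid with majority witness, namely the empty set of majority operations). So the first step is to re\dash{}run, on~\m{\K''}, the enumeration of the \m{1\,715} closed sets from the proof of Corollary~\ref{cor:number-of-centralising-monoids} --- using, for instance, one of the implementations pointed to in Remark~\ref{rem:algorithms-and-implementations} --- and to extract from the result the maximal proper intents.

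Next I would pin these co\dash{}atoms down to concrete witnesses. Since a co\dash{}atom~\m{M} has~\m{\Ops[1]} as its unique upper cover, it is \nbdd{\bigcap}irreducible in the lattice of intents, whence it equals a row \m{\set{f}'=\Fn[1]{\cent{\set{f}}{}}} of the object\dash{}reduced context~\m{\K''}; and since~\m{\K''} is also object\dash{}clarified, this~\m{f} is the representative of exactly one of the \m{392} majority operations~\m{f_n} of Table~\ref{tbl-obj-K2}. One therefore tests, for each \m{n\in\set{0,\dotsc,391}}, whether \m{\set{f_n}'} is a co\dash{}atom --- equivalently, whether it is maximal among the proper rows of~\m{\K''} --- and, performing the comparison, is left with exactly the \m{147} indices~\m{n} listed in the statement.

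To reduce the \m{147} monoids to the claimed \m{19} isomorphism types I would invoke Observation~\ref{obs:conjugation}: for an inner automorphism \m{\pi\in\Sym(4)} the assignment \m{u\mapsto u^{\pi}} is an isomorphism from \m{\Fn[1]{\cent{\set{f}}{}}} onto \m{\lset{u^{\pi}}{u\in\Fn[1]{\cent{\set{f}}{}}}=\Fn[1]{\cent{\set{f^{\pi}}}{}}} for every \m{f\in\Maj}, and --- since conjugation by~\m{\pi} maps~\m{\Maj} onto itself and preserves inclusions among the sets \m{\Fn[1]{\cent{F}{}}} with \m{F\subs\Maj} --- it induces a lattice automorphism of the lattice of majority\dash{}witnessed centralising monoids, which therefore permutes the \m{147} co\dash{}atoms; in particular this collection of \m{147} monoids is a union of orbits under \m{\Sym(4)}\dash{}conjugation. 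It then remains to carry out a finite search over the \m{24} elements of~\m{\Sym(4)} (acting on the \m{24} triples of~\m{\sigma}) confirming, on the one hand, that the \m{19} representative indices \m{1,5,8,9,14,18,20,46,50,73,82,92,94,101,104,184,237,274,283} lie in \m{19} pairwise distinct orbits, and, on the other, that each of the remaining \m{128} monoids \m{\Fn[1]{\cent{\set{f_n}}{}}} equals a conjugate \m{\lset{u^{\pi}}{u\in\Fn[1]{\cent{\set{f_m}}{}}}} of one of them; the concrete monoids are then read off from Tables~\ref{tbl:maximal-monoids-representatives}, \ref{tbl:maximal-monoids-conjugates}, \ref{tbl:maximal-monoids-1} and~\ref{tbl:maximal-monoids-2}, and the assertion that there are no maximal monoids beyond these \m{147} is precisely the completeness part of Theorem~\ref{thm:context-for-all-centralising-monoids}.

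The conceptual content of all this is slight; the main obstacle is the bookkeeping and the dependence on the correctness of the computations --- enumerating the \m{1\,715}\dash{}element lattice, correctly singling out which of the \m{392} rows of~\m{\K''} are co\dash{}atoms, and carrying out the \m{128} conjugacy matchings --- all of which ultimately rest on the verified correctness of~\m{\K''} and of Table~\ref{tbl-obj-K2}.
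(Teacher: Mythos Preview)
Your proposal is correct and follows essentially the same approach as the paper: enumerate the lattice of intents of~$\K''$ via Theorem~\ref{thm:context-for-all-centralising-monoids}, extract the co\dash{}atoms (the paper likewise notes that these must occur among the~$392$ rows of~$\K''$), and then sort the resulting~$147$ witnesses into conjugacy orbits under~$\Sym(4)$. Your additional remark that co\dash{}atoms are \nbdd{\bigcap}irreducible and hence necessarily appear as single rows of the object\dash{}reduced context makes explicit what the paper only alludes to; the one cosmetic slip is the index range \m{\set{0,\dotsc,391}}, whereas the objects of~$\K''$ are labelled \m{f_1,\dotsc,f_{392}} (the auxiliary~$f_0$ having been removed).
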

\begin{proof}
Run through the list of all~$1\,714$ centralising monoids on~$A$ with
majority witnesses except for~$\Ops[1]$ and check whether there is
another one of these that is strictly larger than the currently
considered monoid~$M$. If yes, ignore~$M$, otherwise add~$M$ to the
list of maximal ones. Alternatively, one may only iterate over
all~$392$ majority functions featuring as objects in~$\K''$, since
(similarly as for maximal clones) every maximal centralising monoid
must be given as the Galois derivative~\m{\Fn[1]{\cent{\set{f}}{}}} of
a single non\dash{}trivial element on the opposite side of the Galois
connection.
\par
To obtain the conjugacy classes, one picks the first majority function
from the list and removes all its conjugates from the list, putting
them into a container of their own. One continues like this until the
list is empty.
\end{proof}

We conclude with some remarks on semiprojections.
As for majority operations, one can observe that for any unary
function \m{s\in\Ops[1]} and all \m{\mathff{x}=\apply{x_j}_{j\in n}\in \CarrierSet[n]} with
repetitions, these repetitions persist in~\m{s\circ \mathff{x}}, and so
\m{s(f(\mathff{x})) = s(x_i) = f(s\circ\mathff{x})} for any \nbdd{n}ary
semiprojection~$f$ on the place~$i$.
This shows the following analogue of Observation~\ref{obs:comm-maj-on-sigma}:
\begin{observation}\label{obs:comm-sproj-on-sigma}
Let \m{n\in \N} be at least two, \m{i\in n}, \m{f\in\Ops[n]} a
semiprojection on the position indexed by~$i$ and \m{s\in\Ops[1]} a
unary operation on some set~$\CarrierSet$.
Then we have \m{s\commuteswith f} if and only if
\m{s(f(\mathff{x}))=f(s\circ\mathff{x})} holds for all injective tuples
\m{\mathff{x}\in \CarrierSet[n]} (i.e., those without repetitions).
\end{observation}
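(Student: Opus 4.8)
The plan is to derive the claim directly from Observation~\ref{obs:commutation-unary}, which states that $s \commuteswith f$ holds if and only if $s\apply{f\apply{\mathff{x}}} = f\apply{s\circ\mathff{x}}$ is satisfied for \emph{all} tuples $\mathff{x}\in\CarrierSet[n]$. The ``only if'' part of the statement is then immediate, since the injective tuples form a subcollection of all tuples. For the converse one has to show that if the displayed equation holds on every injective $\mathff{x}$, then it already holds on all of $\CarrierSet[n]$; in other words, the remaining case of tuples with a repeated entry imposes no additional constraint.

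To see this, let $\mathff{x} = \apply{x_j}_{j\in n}$ be a tuple admitting indices $0\leq j<l<n$ with $x_j = x_l$. Since $f$ is a semiprojection on the position~$i$, we have $f\apply{\mathff{x}} = x_i$. Furthermore, the coincidence $x_j = x_l$ propagates under~$s$, because $s\apply{x_j} = s\apply{x_l}$, so $s\circ\mathff{x}$ again has a repetition at the positions~$j$ and~$l$; applying the semiprojection property once more yields $f\apply{s\circ\mathff{x}} = s\apply{x_i}$. Hence $s\apply{f\apply{\mathff{x}}} = s\apply{x_i} = f\apply{s\circ\mathff{x}}$ holds with no hypothesis on~$\mathff{x}$, which completes the argument. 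This is precisely the computation sketched in the paragraph preceding the statement.

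There is no real obstacle to overcome here: the content of the observation is simply that a semiprojection on~$i$ acts like the $i$\dash{}th projection on every non\dash{}injective input, and that a unary map can never turn a tuple with a repetition into an injective one, so the only genuine commutation conditions live on injective tuples. For $\crd{\CarrierSet}=k$ this reduces the test to the $k(k-1)\cdots(k-n+1)$ injective tuples, in direct analogy with the reduction to~$\sigma$ for majority operations in Observation~\ref{obs:comm-maj-on-sigma}.
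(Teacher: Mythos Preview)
Your proof is correct and follows exactly the same approach as the paper: the paper establishes the observation in the paragraph immediately preceding it by noting that repetitions in~$\mathff{x}$ persist in~$s\circ\mathff{x}$, whence $s(f(\mathff{x}))=s(x_i)=f(s\circ\mathff{x})$ holds automatically for non\dash{}injective tuples, leaving only the injective ones to check via Observation~\ref{obs:commutation-unary}. You have reproduced precisely this argument.
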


For \m{n=3} a tuple \m{\mathff{x}\in\CarrierSet[3]} with repetitions
automatically is a majority triple \m{\mathff{x}\in
\CarrierSet[3]\setminus\sigma} and vice versa. By their intrinsic
nature, on these triples, the values of majority operations and ternary
semiprojections are completely preordained. So both types of
functions are uniquely determined by their restrictions to~\m{\sigma},
and the commutation property is intimately connected to the triples
in~$\sigma$. This may lead one to think that knowing the restrictions
to~$\sigma$ is sufficient to decide commutation with unary operations.
The latter is true when studying permutations in place of arbitrary
unary operations, but not in general, as the following example shows.

\begin{example}\label{ex:tern-sproj-not-representable-by-maj}
Consider \m{s\defeq u_{26}\in\Op[1]{4}}, that is,
\m{s\circ\apply{0,1,2,3}=\apply{0,1,2,2}} and some
\m{f\in\Maj[4]\cap\cent{\set{s}}}, e.g., \m{f=f_3} from
Table~\ref{table:obj-reduced-clarified-context-1}.
Furthermore, let \m{g\in\Op[3]{4}} be the unique ternary semiprojection
on the left\dash{}most coordinate given by
\m{\Restr{f}{\sigma}=\Restr{g}{\sigma}}.
Now we have
$s(g(1,2,3))=s(f(1,2,3))=f(s\circ(1,2,3))=f(1,2,2)=2$,
while
\m{g(s\circ(1,2,3)) = g(1,2,2)=1}, so
\m{s\not\commuteswith g}. Hence,
\m{s\in\Fn[1]{\cent{\set{f}}{}}\setminus\cent{\set{g}}} demonstrates
that \m{\Fn[1]{\cent{\set{f}}{}}\neq \Fn[1]{\cent{\set{g}}{}}} although
\m{\Restr{f}{\sigma}=\Restr{g}{\sigma}}. This means that, with respect
to unary centralisers, ternary semiprojections cannot be replaced in a
straightforward way by majority operations (as is possible in the
context of centralising groups,
see~\cite[Proposition~16(c) and Corollary~17, p.~295]{BehPoeschelCentralisingGroups}).
\par
As the example shows, the problem is that agreement on~$\sigma$ does
not ensure that~$f$ and~$g$ coincide on all tuples
\m{s\circ\mathff{x}} where \m{\mathff{x}\in\sigma}, for those may lie
outside~$\sigma$. This is never an issue when~$s$ is a permutation
(as confirmed by the results in Subsection~\ref{subsect:permutation}).
\end{example}

Accordingly, for the task of finding all maximal centralising monoids
on a four\dash{}element set, there remain only two parts. The major
piece is given by considering monoids with ternary or quaternary semiprojection
witnesses, which should be approachable by similar techniques as the
ones presented in this article. The second, and computationally less
costly, hence minor, part will be to deal with centralising monoids
given by the remaining witnesses of smaller arity described in
Rosenberg's Theorem~\ref{thm:Rosenberg-minimal-clones}, i.e., very
specific unary operations, binary idempotent functions and special
ternary minority functions.

\section*{Acknowledgements}
The author would like to express his warmly felt gratitude to Hajime
Machida for several inspiring discussions on centralising monoids and
centraliser clones, and for his persistence at giving many
insightful and encouraging remarks.
\par
\enlargethispage{2\baselineskip}
He would also like to thank Zarathustra Brady for introducing him to
arXiv's ancillary files feature.

\def\ocirc#1{\ifmmode\setbox0=\hbox{$#1$}\dimen0=\ht0 \advance\dimen0
  by1pt\rlap{\hbox to\wd0{\hss\raise\dimen0
  \hbox{\hskip.2em$\scriptscriptstyle\circ$}\hss}}#1\else {\accent"17 #1}\fi}
  \def\Palatalization#1{\bgroup\fontencoding{T1}\selectfont\v{#1}\egroup}
  \def\Dj{\bgroup\fontencoding{T1}\selectfont\DJ\egroup}
  \providecommand*{\doi}[1]{\href{http://dx.doi.org/\detokenize{#1}}{\detokenize{#1}}}
  \ifx\SetBibliographyCyrillicFontfamily\undefined\def\SetBibliographyCyrillicFontfamily{\relax}\fi
  \def\rus#1{\foreignlanguage{russian}{\bgroup\fontfamily{cmr}\fontencoding{T2A}\selectfont#1\egroup}}
  \ifx\AvailableAt\undefined\def\AvailableAt{available from}\fi
  \ifx\AVAILABLEat\undefined\def\AVAILABLEat{Available from}\fi
  \ifx\OnlineAvblAt\undefined\def\OnlineAvblAt{online}\fi
  \ifx\PreprintAvblAt\undefined\def\PreprintAvblAt{pre-print}\fi
\providecommand{\bysame}{\leavevmode\hbox to3em{\hrulefill}\thinspace}
\providecommand{\MR}{\relax\ifhmode\unskip\space\fi MR }
\providecommand{\MRhref}[2]{%
  \href{http://www.ams.org/mathscinet-getitem?mr=#1}{#2}
}
\providecommand{\href}[2]{#2}

\smallskip

\myContact{\CorrespondingAuthor}{%
\TUWname,
\InstitutDMG,
\AddressDMG,
\PostleitzahlWien\\
\url{https://orcid.org/0000-0003-0050-8085}}{behrisch@logic.at}

\begin{table}
\includegraphics[width=\linewidth]{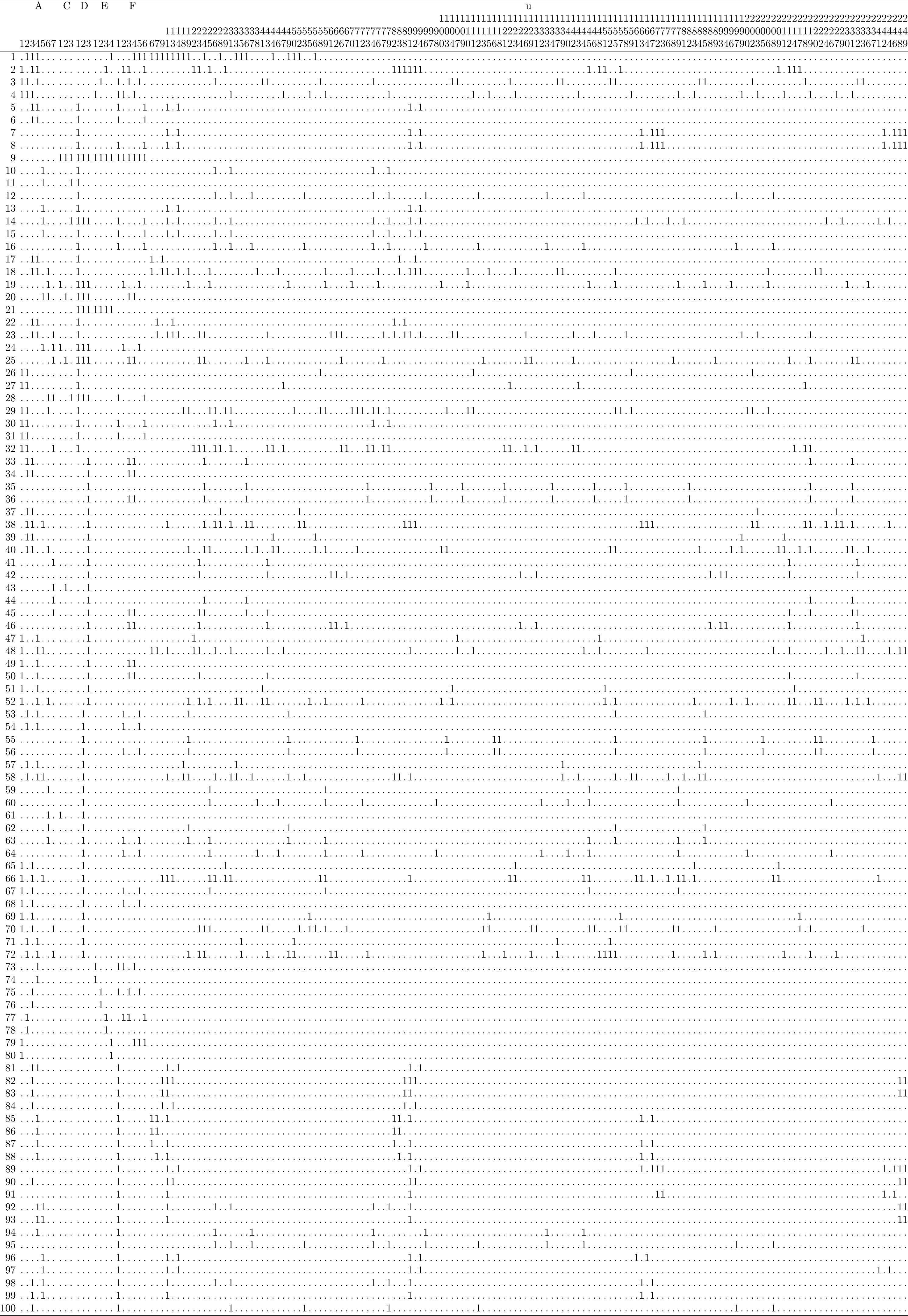}%
\caption{The context $\K''$, objects 1--100 (for attributes, see before
Theorem~\ref{thm:context-for-all-centralising-monoids}).}
\label{table:obj-reduced-clarified-context-1}
\end{table}
\begin{table}
\includegraphics[width=\linewidth]{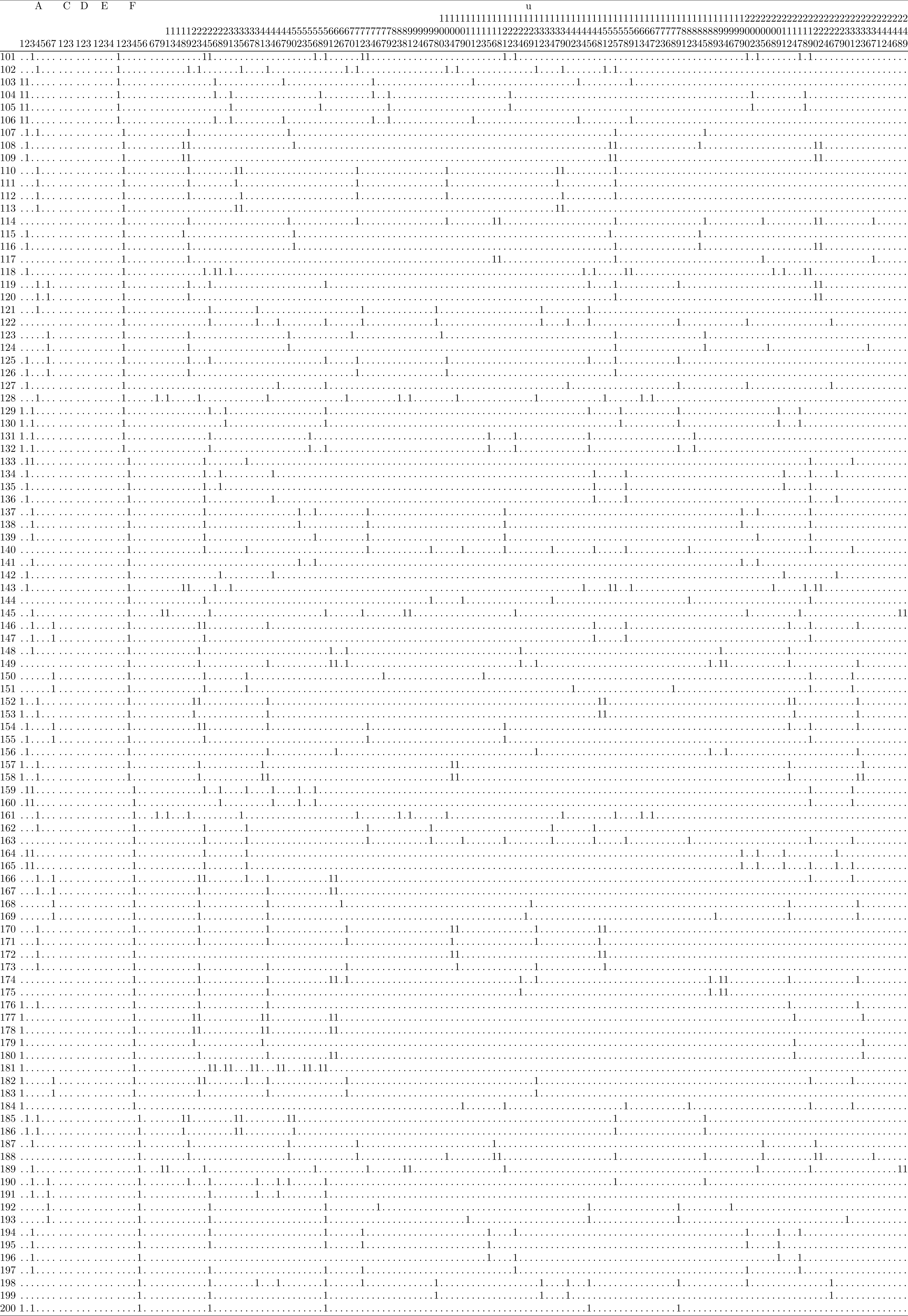}%
\caption{The context $\K''$, objects 101--200 (for attributes, see before Theorem~\ref{thm:context-for-all-centralising-monoids}).}
\label{table:obj-reduced-clarified-context-2}
\end{table}
\begin{table}
\includegraphics[width=\linewidth]{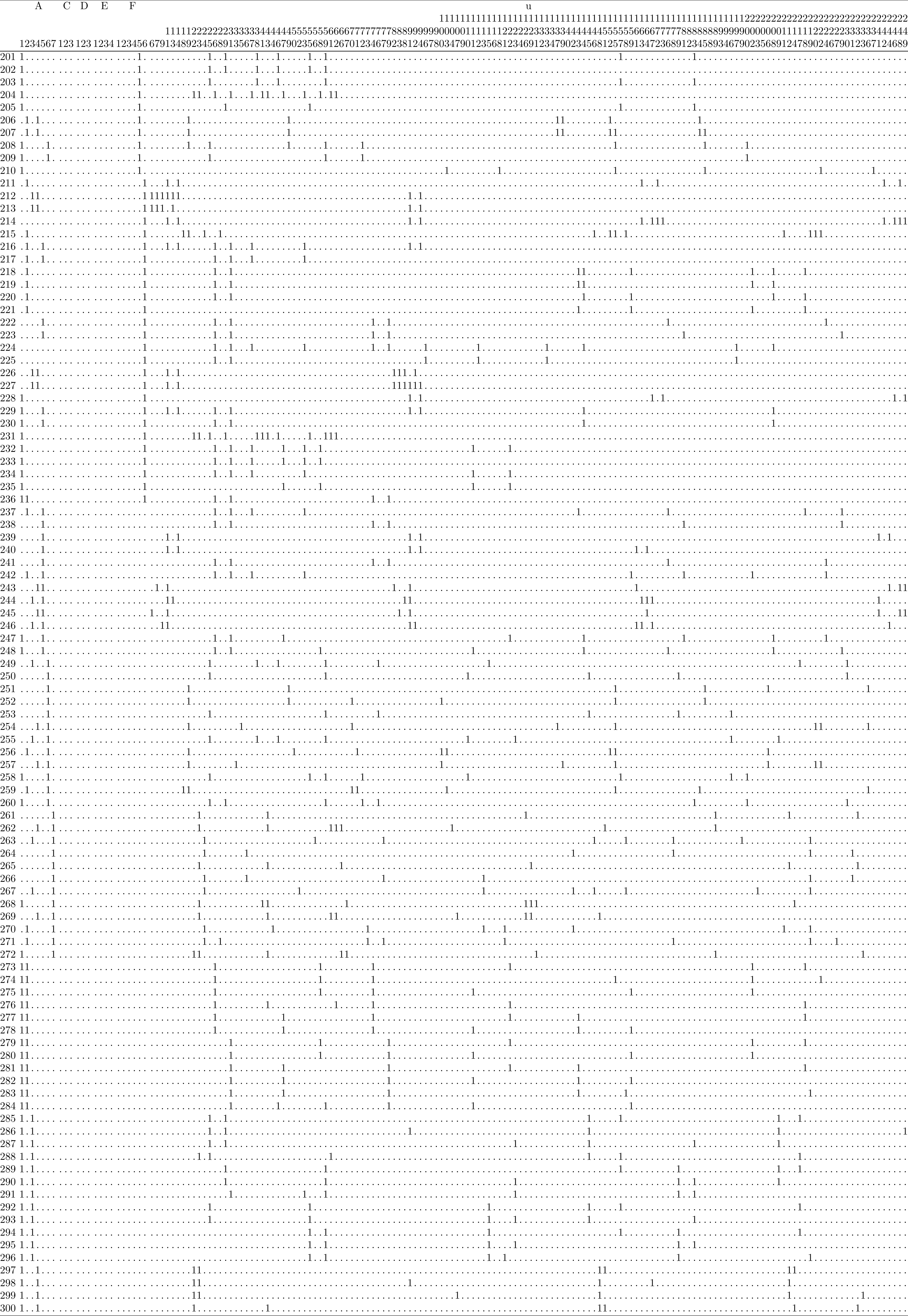}%
\caption{The context $\K''$, objects 201--300 (for attributes, see before Theorem~\ref{thm:context-for-all-centralising-monoids}).}
\label{table:obj-reduced-clarified-context-3}
\end{table}
\begin{table}
\includegraphics[width=\linewidth]{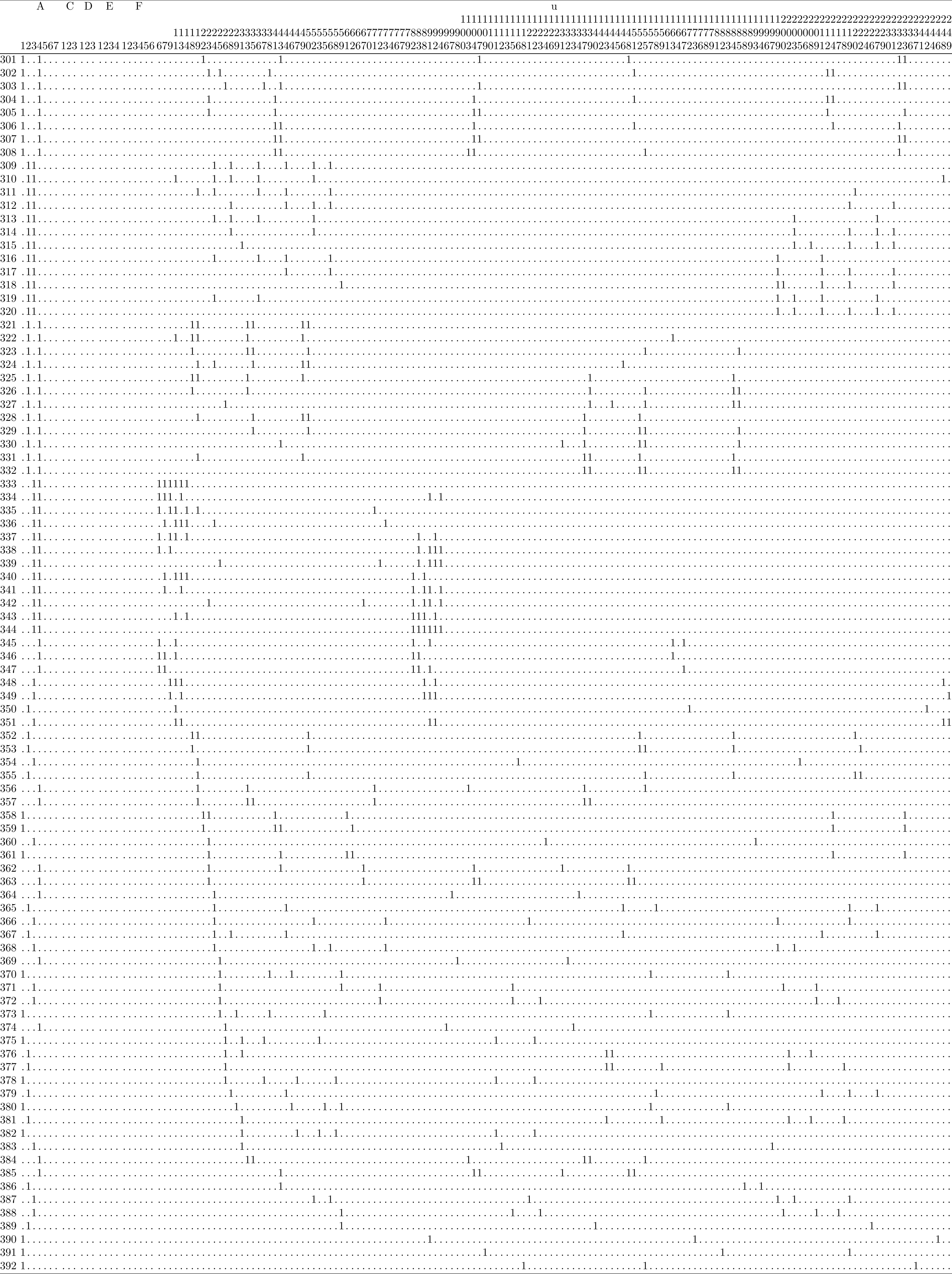}%
\caption{The context $\K''$, objects 301--392 (for attributes, see before Theorem~\ref{thm:context-for-all-centralising-monoids}).}
\label{table:obj-reduced-clarified-context-4}
\end{table}
\begin{table}
\includegraphics[width=\linewidth]{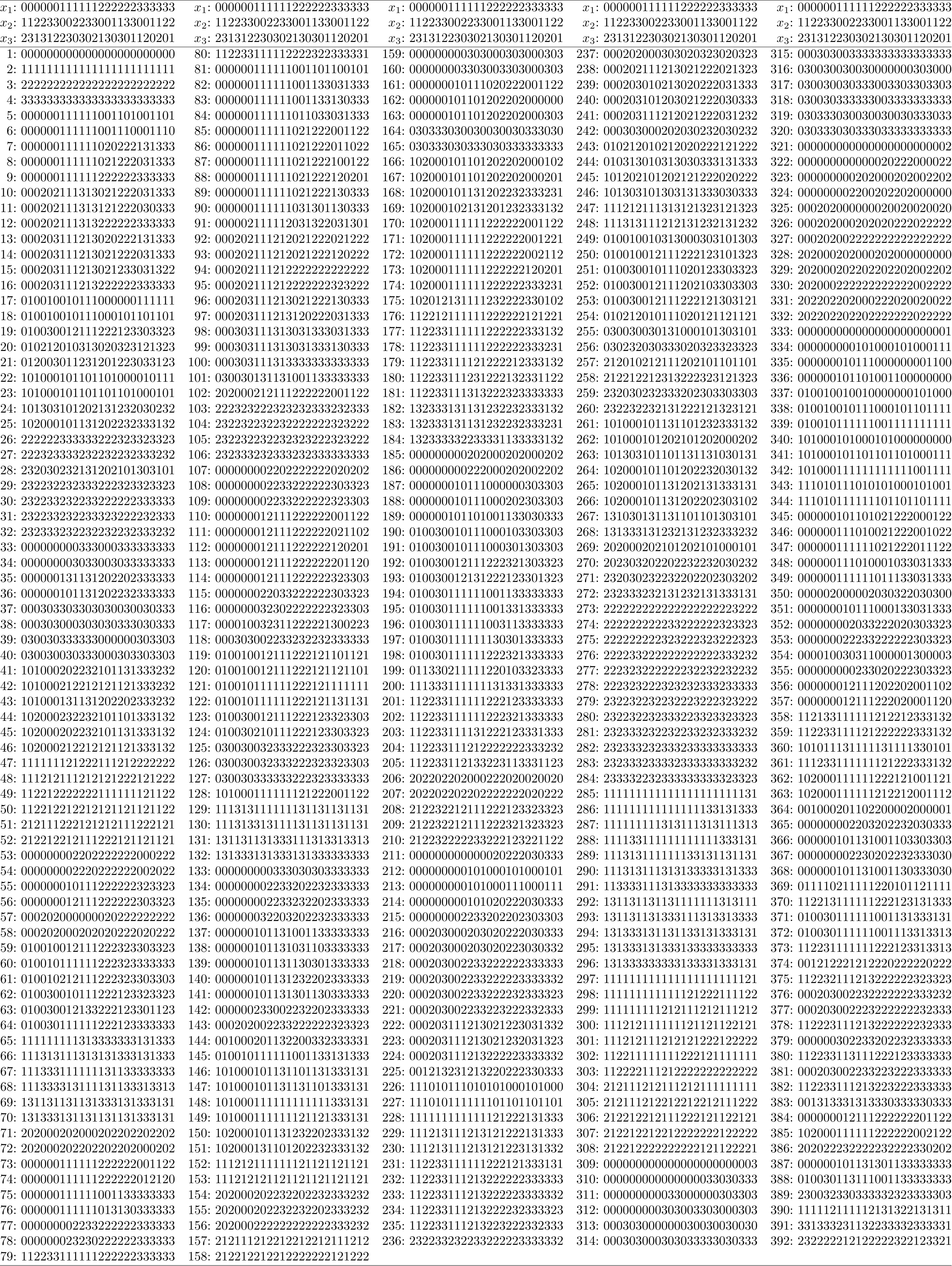}%
\caption{Objects of the context $\K''$, given as value tables of
tuples~$(x_1,x_2,x_3)\in\sigma$.}
\label{tbl-obj-K2}
\end{table}
\begin{table}
\includegraphics[width=\linewidth]{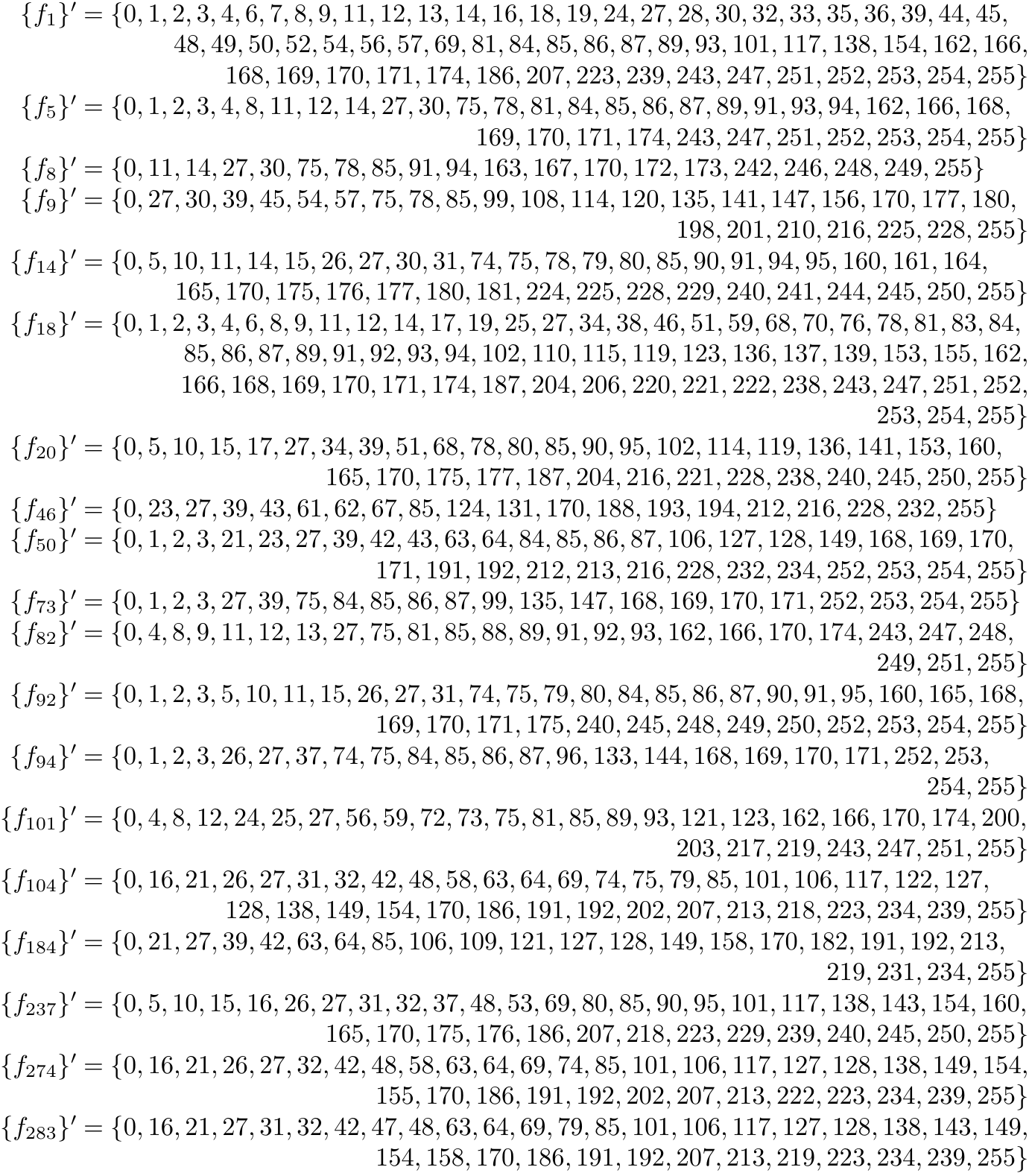}%
\caption{Maximal centralising monoids
$\{f_i\}'\defeq\Fn[1]{\cent{\{f_i\}}{}}$ with majority
witnesses~$f_i$, represented up to conjugacy by inner automorphisms
(for the index~$i$ refer to Table~\ref{tbl-obj-K2}, the operations
$s=u_n\in\{f_i\}'$ are coded by integers \m{n=\sum_{j=0}^3 s(j)4^{3-j}})}
\label{tbl:maximal-monoids-representatives}
\end{table}
\begin{table}
\includegraphics[width=0.895\linewidth]{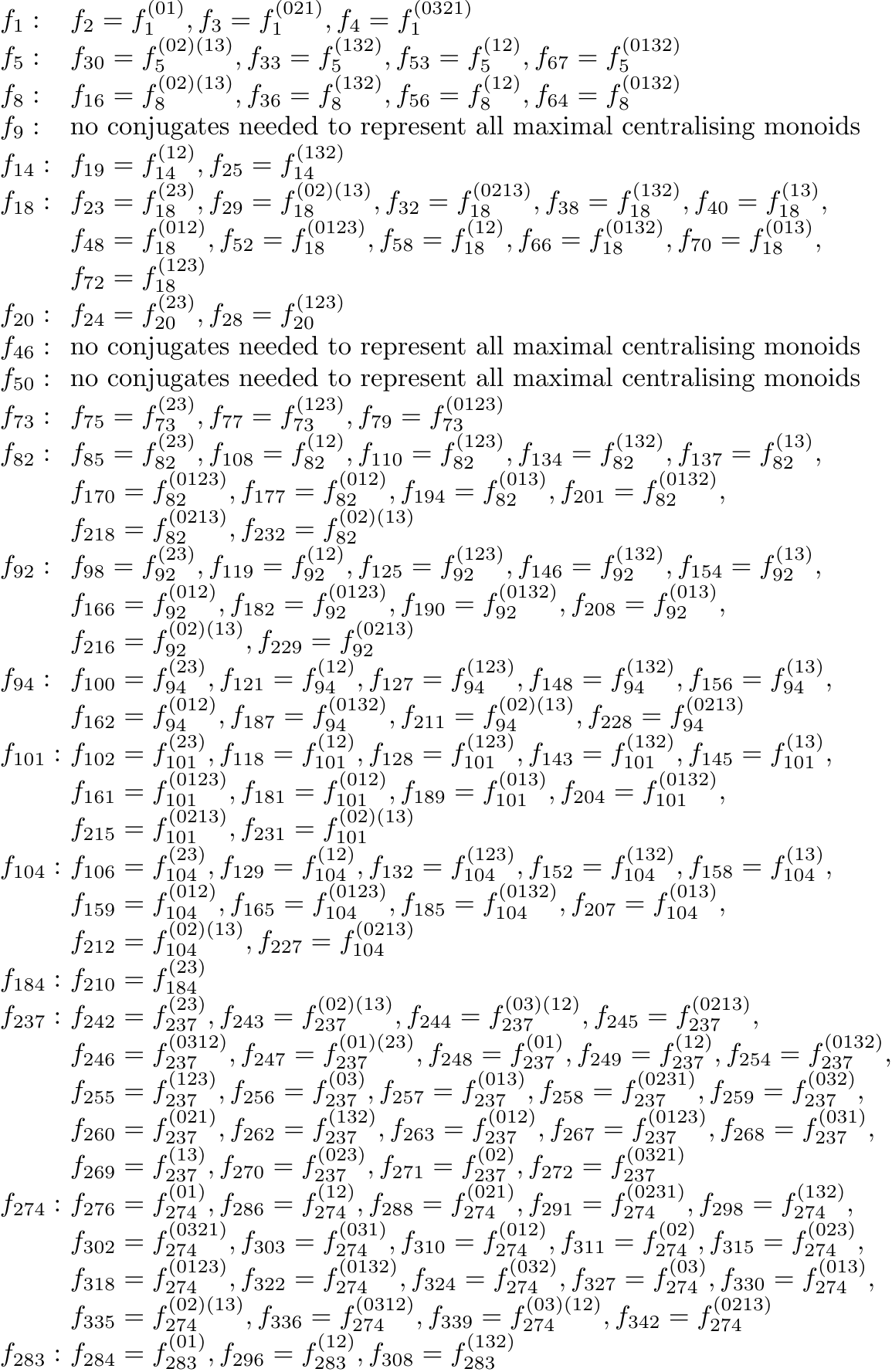}%
\caption{Majority witnesses~$f_i$ for maximal centralising monoids,
represented as conjugates~$f_j^s$ (with $s\in\Sym(4)$) of the witnesses
from Table~\ref{tbl:maximal-monoids-representatives}
(for the indices~$i,j$ refer to Table~\ref{tbl-obj-K2})}
\label{tbl:maximal-monoids-conjugates}
\end{table}
\begin{table}
\includegraphics[width=0.95\linewidth]{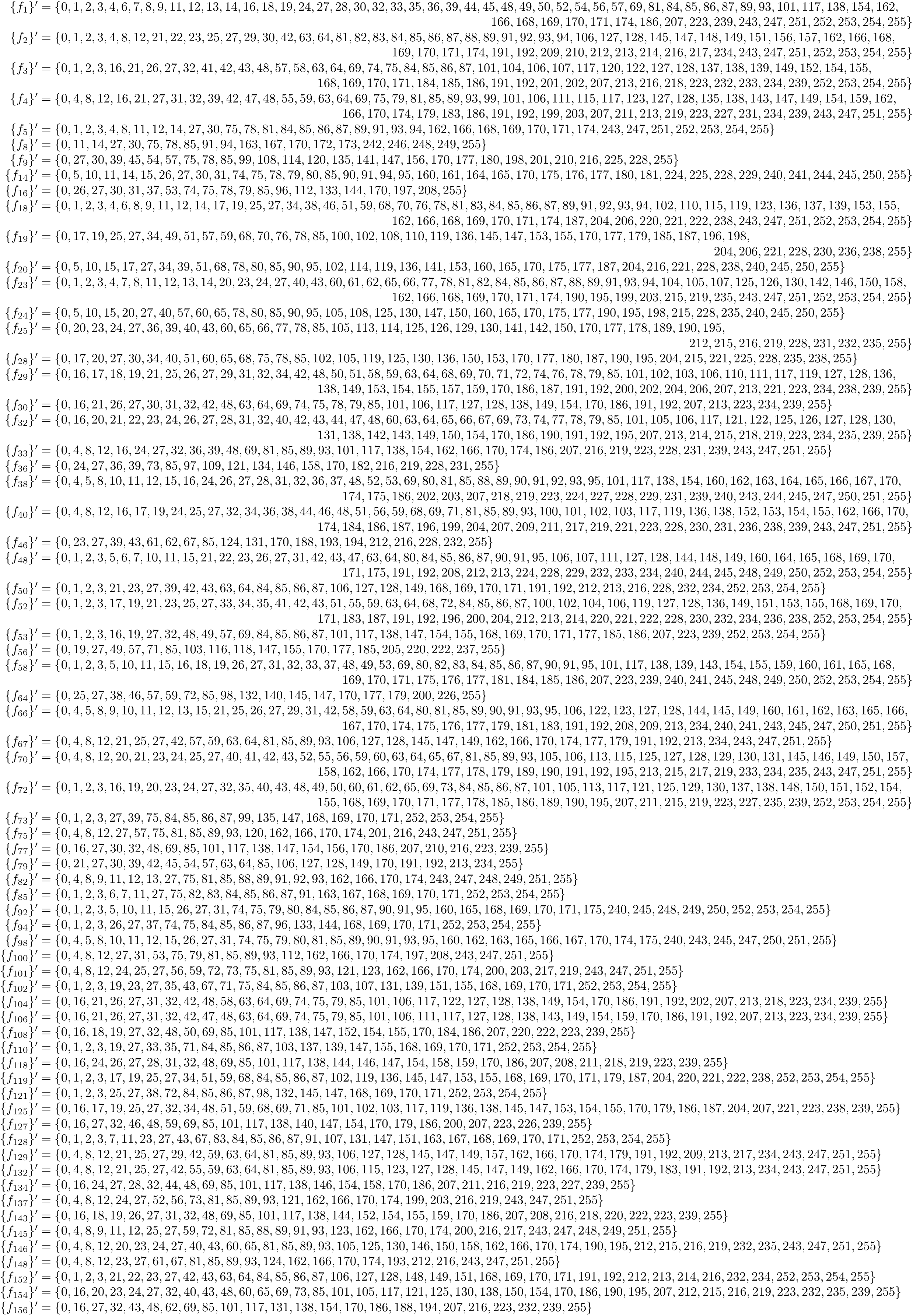}%
\caption{Maximal centralising monoids
$\{f_i\}'\defeq\Fn[1]{\cent{\{f_i\}}{}}$ with majority
witnesses~$f_i$, first part
(for the index~$i$ refer to Table~\ref{tbl-obj-K2}, the operations
$s=u_n\in\{f_i\}'$ are coded by integers \m{n=\sum_{j=0}^3 s(j)4^{3-j}})}
\label{tbl:maximal-monoids-1}
\end{table}
\begin{table}
\includegraphics[width=\linewidth]{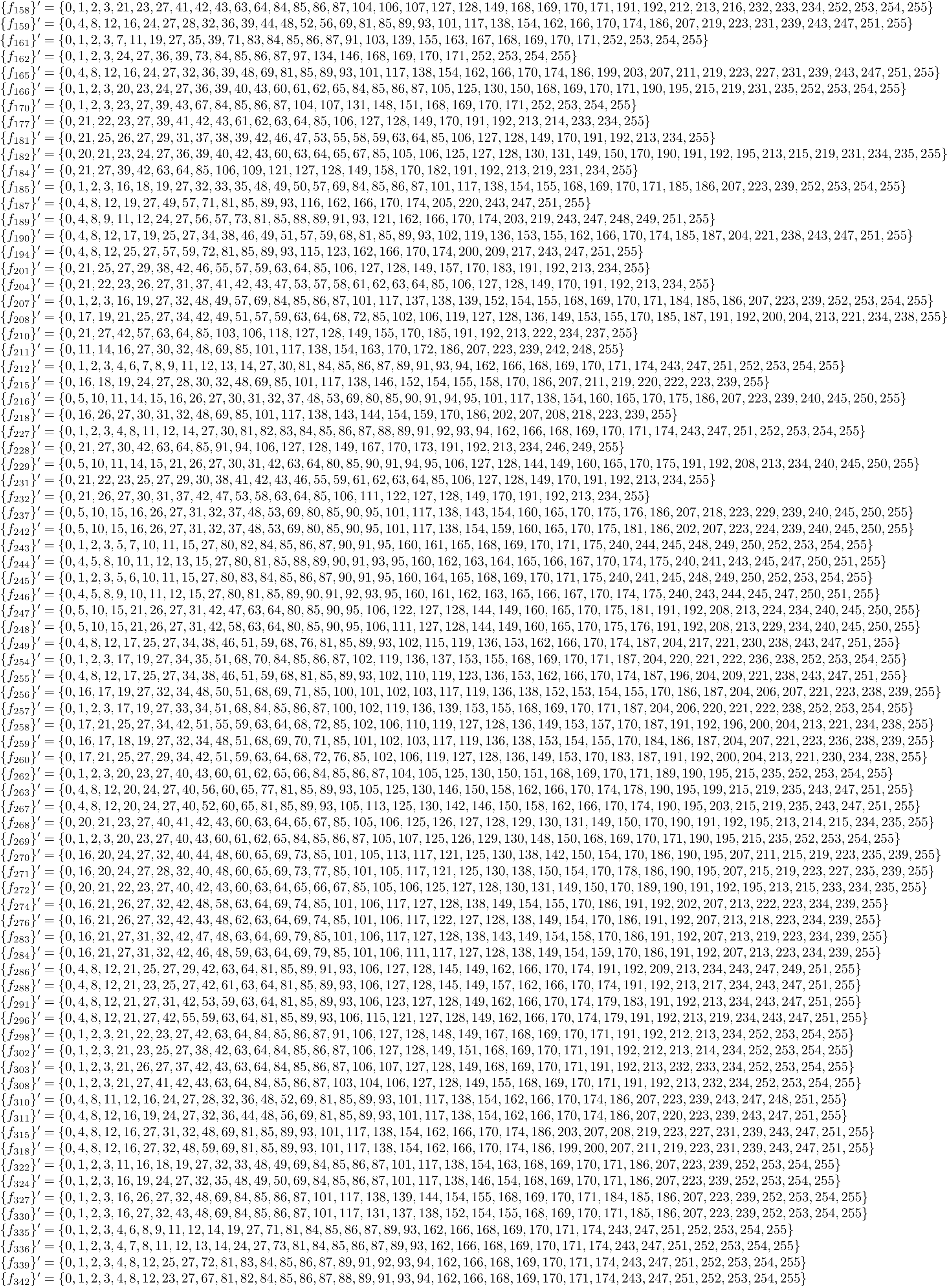}%
\caption{Maximal centralising monoids
$\{f_i\}'\defeq\Fn[1]{\cent{\{f_i\}}{}}$ with majority
witnesses~$f_i$, second part
(for the index~$i$ refer to Table~\ref{tbl-obj-K2}, the operations
$s=u_n\in\{f_i\}'$ are coded by integers \m{n=\sum_{j=0}^3 s(j)4^{3-j}})}
\label{tbl:maximal-monoids-2}
\end{table}

\end{document}